\newtheorem{theorem}{Theorem}[section]
\newtheorem{remark}{Remark}[section]
\newtheorem{definition}{Definition}[section]
\newtheorem{lemma}[theorem]{Lemma}
\newtheorem{proposition}[theorem]{Proposition}
\newcommand{\n}{\rho}
\newcommand{\lm}{\lambda}
\renewcommand{\div}{ {\rm div }  }
\newcommand{\na}{\nabla }
\newcommand{\pa}{\partial}
\newcommand{\bt}{\begin{theorem}}
\newcommand{\bl}{\begin{lemma}}
\newcommand{\el}{\end{lemma}}
\newcommand{\et}{\end{theorem}}
\newcommand{\ga}{\gamma}
\newcommand{\OM}{\Omega}
\newcommand{\curl}{{\rm curl} }
\newcommand{\de}{\delta}
\newcommand{\la}{\label}
\newcommand{\si}{\sigma}
\newcommand{\ol}{\overline}
\newcommand{\bn}{\begin{eqnarray}}
\newcommand{\en}{\end{eqnarray}}
\newcommand{\bnn}{\begin{eqnarray*}}
\newcommand{\enn}{\end{eqnarray*}}
\newcommand{\bnnn}{\begin{eqnarray*}}
\newcommand{\ennn}{\end{eqnarray*}}
\newcommand{\ben}{\begin{enumerate}}
\newcommand{\een}{\end{enumerate}}
\newcommand{\T}{\mathbb{T}}
\newcommand{\ba}{\begin{aligned}}
\newcommand{\ea}{\end{aligned}}
\newcommand{\be}{\begin{equation}}
\newcommand{\ee}{\end{equation}}
\def\p{\partial}
\def\norm[#1]#2{\|#2\|_{#1}}
\def\lam{\lambda}
\def\ep{\varepsilon}
\def\r{\mathbb{R}}
\def\rr{\mathbb{R}^2}
\def\rrr{\mathbb{R}^3}
\begin{document}

\date{}

\title[Global Strong Solutions to Compressible Navier-Stokes Equations]
{Global Strong Solutions to the Three-Dimensional Axisymmetric Compressible Navier-Stokes
Equations with Large Initial Data and Vacuum}

\author{Qinghao Lei}
\address{School of Mathematical Sciences, University of Chinese Academy of Sciences, Beijing 100049, P. R. China;
Institute of Applied Mathematics, Academy of Mathematics and Systems Science,
Chinese Academy of Sciences, Beijing 100190, P.R. China}
\email{leiqinghao22@mails.ucas.ac.cn}

\begin{abstract}
This paper investigates the three-dimensional axisymmetric compressible Navier-Stokes equations under slip boundary conditions in a cylindrical domain excluding the axis.
For initial density allowed to vanish, we establish the global existence and
large time asymptotic behavior of strong and weak solutions,
provided the shear viscosity is a positive constant and the bulk one is a power function of density with the power bigger than four-thirds.
It should be noted that these results are obtained without any restrictions on the size of initial data.
The key idea is to derive a pointwise estimate of the effective viscous flux by
exploiting the axisymmetry of the solutions, along with the conformal mapping and the pull back Green's function, and then to cancel out the singularity using the slip boundary conditions.
\end{abstract}

\subjclass[2020]{35Q35, 35B07, 35B65, 76N10}

\keywords{Compressible Navier-Stokes equations; Axisymmetric solutions;
Global strong solutions; Large initial data; Vacuum}

\maketitle

\section{Introduction and main results}
We consider the three-dimensional barotropic compressible Navier-Stokes equations:
\be\ba\la{ns}
\begin{cases}
\rho_t + \div(\rho \mathbf{u}) = 0,\\
(\n \mathbf{u})_t + \div(\n \mathbf{u}\otimes \mathbf{u}) -\mu \Delta \mathbf{u} 
-\na ( (\mu + \lm) \div \mathbf{u}) + \na P = 0,
\end{cases}
\ea\ee
where $t \ge 0$ is time, $x \in \OM \subset \rrr$ is the spatial coordinate.
The unknown functions $\n=\n(x,t)$ and $\mathbf{u}(x,t)=(u^1(x,t),u^2(x,t),u^3(x,t))$ represent the 
density and velocity of the fluid, respectively.
The pressure $P$ is given by
\be\ba\la{i1}
  P=a\n^\ga,
\ea\ee
with constants $a>0$ and $\ga > 1$.
The shear viscosity coefficient $\mu$ and bulk viscosity coefficient $\lam$ satisfy:
\be\la{i2}
0<\mu = \text{constant},\quad \lam(\n)=b \n^\beta,
\ee
where $b$ and $\beta$ are positive constants.
Without loss of generality, we assume that $a=b=1$.

The system is subject to the given initial data
\be\la{i30}
\n(x,0)=\n_0(x),\quad \n \mathbf{u}(x,0)= \mathbf{m}_0(x), \quad x\in \OM,
\ee
and slip boundary conditions:
\be\la{i3}\ba
\mathbf{u} \cdot n = 0,\ \ \curl \mathbf{u} \times n = -K \mathbf{u} \,\,\,\text{on}\,\,\, \partial\Omega,
\ea\ee
where $K=K(x)$ is a $3 \times 3$ symmetric matrix defined on $\p \OM$,
and $n=(n_1,n_2,n_3)$ denotes the unit outer normal vector to the boundary $\partial \Omega$.

There is a vast literature addressing the strong solvability of the
multidimensional compressible Navier-Stokes system with constant viscosity coefficients.
The local existence and uniqueness of classical solutions
were proved by Nash \cite{N} and Serrin \cite{S}, respectively, for strictly positive initial density.
The first result of global classical solutions was established by
Matsumura-Nishida \cite{MN1}, provided the initial data are close
to a non-vacuum equilibrium in the $H^s$-norm.
Later, Hoff \cite{H1,H3} studied the problem for discontinuous
initial data and developed new a priori estimates for the material derivative $\mathbf{\dot{u}}$.
For arbitrarily large initial data, Lions \cite{L2} (see also Feireisl \cite{F} and Feireisl et al. \cite{FNP})
proved the global existence of finite-energy weak solutions under the condition that the adiabatic exponent $\ga$ is suitably large.
Recently, Huang-Li-Xin \cite{HLX2} established the global
existence and uniqueness of classical solutions
to the three-dimensional Cauchy problem.
Their result holds for initial data with small total energy but possibly large oscillations and vacuum.
Subsequently, Li-Xin \cite{LX2} extended these existence results to the two-dimensional case and obtained the large time asymptotic behavior of solutions.
Furthermore, Cai-Li \cite{CL} generalized the above results to bounded domains 
with the velocity field subject to slip boundary conditions.

It is noteworthy that, without restrictions on the size of initial data,
a remarkable result was established by Vaigant-Kazhikhov \cite{VK}, who proved that the two-dimensional system (\ref{ns})--(\ref{i30}) admits a unique global strong solution for large initial data with density away from vacuum, provided $\beta>3$ in rectangle domains.
Later, in the periodic domain, Jiu-Wang-Xin \cite{JWX1} generalized the result in \cite{VK} by removing the condition that the initial density should be away from vacuum.
Recently, for the system (\ref{ns})--(\ref{i30}) in the two-dimensional periodic domains or the two-dimensional whole space with the density allowed to vanish,
Huang-Li \cite{HL2,HL3} (see also \cite{JWX2})
relaxed the crucial condition from $\beta>3$ to $\beta>\frac{4}{3}$
by applying some new ideas based on commutator theory and blow up criterion.
Very recently, Fan-Li-Li \cite{FLL} investigated the problem (\ref{ns})--(\ref{i30}) in a general
two-dimensional bounded simply connected domain, where the velocity field is subject to the Navier-slip boundary conditions.
They established the global existence of strong and weak solutions when $\beta>\frac{4}{3}$.
Furthermore, Fan-Li-Wang \cite{FLW} obtained the time-independent upper bound of the density and the exponential decay of the global strong solution under the sole assumption $\beta>\frac{4}{3}$ in two-dimensional periodic domains or bounded simply connected domains.
Later, Fan-Jiang-Li \cite{FJL} generalized these results to two-dimensional multi-connected domains.

In this paper, we investigate the global existence of axisymmetric
strong and weak solutions to the three-dimensional compressible Navier-Stokes equations in a cylindrical domain excluding the axis,
subject to slip boundary conditions.
Without loss of generality, we consider
\be\la{om}\ba
\OM=A \times \T,
\ea\ee
where $A=\{ (x_1,x_2) \in \rr : 1<x^2_1+x^2_2<4 \}$ is a two-dimensional annulus and $\T=\r / \mathbb{Z}$ is the one-dimensional torus.
We assume that the flow is periodic in the $x_3$-direction with period 1.

For $(x_1,x_2,x_3)\in \rrr$, we introduce the cylindrical coordinate transformation
\be\ba\nonumber
\begin{cases}
x_1=r \cos \theta,\\
x_2=r \sin \theta,\\
x_3=z,\\
\end{cases}
\ea\ee
and define the standard orthonormal basis in $\rrr$ as:
\be\ba\nonumber
\mathbf{e}_r=\frac{(x_1,x_2,0)}{r},
\ \mathbf{e}_\theta=\frac{(-x_2,x_1,0)}{r},
\  \mathbf{e}_z=(0,0,1).
\ea\ee
where $r=\sqrt{x^2_1+x^2_2}$.

A scalar function $g$ or a vector-valued function
$\mathbf{f}=f_r \mathbf{e}_r + f_\theta \mathbf{e}_\theta + f_z\mathbf{e}_z$
is called axisymmetric if $g$, $f_r$, $f_\theta$, and $f_z$ are independent of $\theta$.

We study the axisymmetric solutions to the problem $(\ref{ns})-(\ref{i3})$
that are periodic in $x_3$ with period $1$.
Specifically, we consider solutions of the form:
\be\la{sdc}\ba
\begin{cases}
\n(x_1,x_2,x_3,t)=\n(r,z,t), \\
\mathbf{u}(x_1,x_2,x_3,t)=u_r(r,z,t)\mathbf{e}_r + u_\theta(r,z,t)\mathbf{e}_\theta + u_z(r,z,t)\mathbf{e}_z, \\
\n(x_1,x_2,x_3+1,t) = \n(x_1,x_2,x_3,t), \ 
\mathbf{u}(x_1,x_2,x_3+1,t)=\mathbf{u}(x_1,x_2,x_3,t),
\end{cases}
\ea\ee
for any $(x_1,x_2)\in A$ and $x_3 \in \r$.

Before stating the main results, we first explain the notations
and conventions used throughout this paper. We denote
\be\ba\nonumber
  \int f dx = \int_{\OM} fdx,\quad
   \ol{f}=\frac{1}{|\OM|}\int fdx.
\ea\ee
For $1\leq r\leq\infty$, the standard Lebesgue and Sobolev spaces are denoted by
\be\ba\nonumber
\begin{cases}
L^r =L^r(\OM),\quad W^{s,r} =W^{s,r}(\OM),\quad H^s =W^{s,2}, \\
\tilde{H}^1 =\left\{v \in H^1(\Omega ) \big| v \cdot n=0, \mathrm{curl} v \times n =-K v \text{ on } \partial \Omega \right\}.
\end{cases}
\ea\ee

In the axisymmetric setting and through coordinate transformations,
we define the corresponding two-dimensional domain $D$ associated with the domain $\OM$ as:
\be\la{ewD}\ba
D=\{ (r,z)\in \rr:1<r<2,0<z<1 \}.
\ea\ee
The material derivative is given by
\be\ba\nonumber
\frac{D}{Dt}f=\dot{f} \triangleq f_t + \mathbf{u}\cdot\na f,
\ea\ee
and the shear stress tensor is defined as:
\be\ba\nonumber
D(v)=\frac{1}{2} \left( \na v + (\na v)^{ \text{tr} } \right).
\ea\ee

We now introduce the definitions of weak and strong solutions in the axisymmetric class for the system (\ref{ns}).
\begin{definition}
A pair $(\n,\mathbf{u})$ is called a weak solution in the axisymmetric class
to the system (\ref{ns}) if it is axisymmetric and periodic in $x_3$ with period $1$
(i.e., (\ref{sdc}) holds), and satisfies (\ref{ns}) in the sense of distribution.

Furthermore, such a weak solution in the axisymmetric class is called a strong solution in the axisymmetric class
if all derivatives involved in (\ref{ns}) are regular distributions,
and the system (\ref{ns}) holds almost everywhere in
$\OM\times(0,T)$.
\end{definition}
The first main result concerning the global existence and exponential decay of strong solutions can be described as follows:
\begin{theorem}\la{th0}
Assume that
\be\la{bg}\ba
\beta>\frac{4}{3}, \quad \ga>1,
\ea\ee
and that $K$ is a smooth, symmetric, positive semi-definite $3 \times 3$ axisymmetric matrix-valued function
satisfying $K + 2D(n)$ is positive definite on some subset $\Sigma \subset \p \OM$ with $|\Sigma|>0$.
Suppose that the initial data $(\n_0,\mathbf{m}_0)$ satisfy for some $q>3$,
\be\la{ssol1}\ba
0\le \n_0 \in W^{1,q},\quad \mathbf{u}_0 \in \tilde{H}^1,
\quad \mathbf{m}_0(x) = \n_0 \mathbf{u}_0,
\ea\ee
and $\n_0$, $\mathbf{u}_0$ are axisymmetric and periodic in $x_3$ with period $1$.

Then the problem $(\ref{ns})-(\ref{i3})$ admits a unique
strong solution $(\n,\mathbf{u})$ within the axisymmetric class
in $\OM \times (0,\infty)$ satisfying for any $0<T<\infty$,
\be\la{ssol2}\ba
\begin{cases}
\rho\in C([0,T];W^{1,q} ), \quad \n_t\in L^\infty(0,T;L^2), \\ 
\mathbf{u} \in L^\infty(0,T; H^1) \cap L^{(q+1)/q}(0,T; W^{2,q}), \\ 
t^{1/2} \mathbf{u} \in L^2(0,T; W^{2,q}) \cap L^\infty(0,T;H^2), \\
t^{1/2} \mathbf{u}_t \in L^2(0,T;H^1), \\
\n \mathbf{u} \in C([0,T];L^2), \quad \sqrt{\n} \mathbf{u}_t\in L^2(\OM \times(0,T)).
\end{cases} 
\ea\ee

Moreover, the global solution $(\n,\mathbf{u})$ satisfies the following properties:

1) (Uniform boundedness) There exists a positive constant $C$ depending only on
$\ga$, $\beta$, $\mu$, $\| \n_0 \|_{L^\infty}$, $\| \mathbf{u}_0 \|_{H^1}$, and $K$, such that for any $0<T<\infty$,
\be\la{up}\ba
\sup_{0 \le t \le T} \| \n(\cdot,t) \|_{L^\infty} \le C.
\ea\ee

2) (Exponential decay) For any $p \in [1,\infty)$, there exist positive constants
$C$ and $\alpha_0$ depending only on
$p$, $\ga$, $\beta$, $\mu$, $\| \n_0 \|_{L^\infty}$, $\| \mathbf{u}_0 \|_{H^1}$, and $K$, such that for any $1 \le t <\infty$,
\be\la{ed}\ba
\| \n(\cdot,t)-\ol{\n_0}\|_{L^p} +
\| \na \mathbf{u}(\cdot,t) \|_{L^p} \le C e^{-\alpha_0 t}.
\ea\ee
\end{theorem}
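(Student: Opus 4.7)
The strategy is the standard continuation principle: establish local existence of a unique strong solution in the axisymmetric class, then derive time-independent a priori estimates that allow extension to all times. The axisymmetry together with the fact that the cylindrical domain excludes the axis (so $r\in[1,2]$) reduces the problem to a $2$D problem on $D$ equipped with the weighted measure $r\,dr\,dz$, with weight uniformly bounded above and below. Consequently the analysis closely parallels the two-dimensional bounded-domain case with Navier-slip boundary, and the techniques of Huang--Li \cite{HL2,HL3} and Fan--Li--Li \cite{FLL} can be adapted, with additional care for the geometric terms (centrifugal force, swirl coupling $u_r u_\theta/r$, etc.) arising from the cylindrical reduction.

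First I would prove local existence of a strong solution in the axisymmetric class on a maximal interval $[0,T^*)$ by a linearization/fixed-point argument, using that the full $3$D operator is equivariant under rotations about the $z$-axis so that axisymmetry is preserved. Then come the a priori estimates. Basic mass conservation and the energy identity give control of $\|\sqrt{\rho}\mathbf{u}\|_{L^2}$ and $\|\nabla\mathbf{u}\|_{L^2}$ time-uniformly. The crucial step --- and the main obstacle --- is the time-independent $L^\infty$ bound (\ref{up}) on the density. Following the Huang--Li scheme I would introduce the effective viscous flux
\[
F=(2\mu+\lambda(\rho))\,\mathrm{div}\,\mathbf{u}-P+\overline{P},
\]
exploit the elliptic equation it satisfies together with the slip boundary condition (the assumption that $K+2D(n)$ is positive definite on a set of positive measure gives the needed Poincaré-type inequality on $\tilde{H}^1$), and combine a commutator estimate for $\mathrm{div}(\rho\mathbf{u})$ with a Zlotnik-type ODE argument along particle trajectories. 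The hypothesis $\beta>\tfrac{4}{3}$ is used precisely at this point: it provides the correct scaling so that the degenerate diffusion encoded in $\lambda(\rho)=\rho^\beta$ dominates the pressure growth in the transport equation for $\log\rho$. The annular cross-section and periodicity in $z$ remove boundary issues at $r=0$ and $\partial_3\Omega$, but the extra terms from cylindrical coordinates must be shown to be lower order compared to the main dissipation using that $r$ is bounded away from $0$.

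Once (\ref{up}) is in hand, the higher regularity follows by now-standard arguments: $\|\nabla\mathbf{u}\|_{L^2}$-control via testing with $\dot{\mathbf{u}}$, then time-weighted estimates yielding $t^{1/2}\mathbf{u}\in L^\infty(H^2)\cap L^2(W^{2,q})$ and $t^{1/2}\mathbf{u}_t\in L^2(H^1)$, all of which, together with a Serrin-type blow-up criterion expressing singularity formation in terms of $\|\rho\|_{L^\infty}$, force $T^*=\infty$ and produce the regularity class (\ref{ssol2}). Finally, the exponential decay (\ref{ed}) follows from (\ref{up}) and the uniform dissipation: since the density is bounded, $\mathrm{div}\,\mathbf{u}$, $\rho-\overline{\rho_0}$ and $\nabla\mathbf{u}$ can be coupled through the momentum equation and a Poincaré inequality on $\tilde{H}^1$ to yield a differential inequality $\frac{d}{dt}E(t)+\alpha_0 E(t)\le 0$ for a suitable energy functional $E\sim\|\rho-\overline{\rho_0}\|_{L^2}^2+\|\nabla\mathbf{u}\|_{L^2}^2$, which after interpolation upgrades to the stated $L^p$ decay for any $p\in[1,\infty)$.
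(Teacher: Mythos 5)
Your plan captures the broad architecture of the paper's proof — local axisymmetric existence, energy estimates, effective viscous flux, slip-boundary Poincar\'e inequality from the positive-definiteness of $K+2D(n)$, a Zlotnik ODE argument along particle paths, approximation by positive initial densities, and a continuation to $T=\infty$ — so the skeleton is right. But two points are substantively wrong or missing.

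First, the heart of the density bound is \emph{not} a ``commutator estimate for $\mathrm{div}(\rho\mathbf{u})$'' in the Riesz-transform sense familiar from the periodic/whole-space works of Huang--Li. In the bounded, multi-connected setting the paper instead produces a \emph{pointwise} estimate of the effective viscous flux $G$ along flow lines by writing $G$ as the solution of a Neumann problem, representing it via the Green's function of the unit disk pulled back by a Riemann map $\varphi$, and then delicately estimating the double-gradient kernel $\Lambda_{i,j}(\varphi(y),v(x))$ --- splitting into the cases $|\varphi(x)|\le 1-c_1$ and $|\varphi(x)|>1-c_1$ and exploiting the slip condition $u_r=0$ on the lateral boundary $\Gamma_2$ so that the image point $x'=\varphi^{-1}(\varphi(x)/|\varphi(x)|)$ lands on $\Gamma_2$ (Lemmas~\ref{l7} and~\ref{ll7}). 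Moreover, since the fluid is periodic in $x_3$ with no Neumann data on the top and bottom of the cylinder, the paper must first extend the domain to $\Omega_1$ in the periodic direction before the Green's-function representation is even available; this extension and the resulting distinction between the ``physical'' boundary $\Gamma_2$ and the ``artificial'' boundary $\Gamma_1$ are genuine new ingredients beyond Fan--Li--Li, and your proposal does not address them. You also omit that, because the annular cross-section is multi-connected, the plain div--curl inequality fails, and the boundary-weighted version (Lemmas~\ref{dltdc1}, \ref{dltdc2}, \ref{wdc}) is needed throughout the estimates.

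Second, your explanation of the role of $\beta>\tfrac{4}{3}$ --- that ``the degenerate diffusion $\lambda(\rho)=\rho^\beta$ dominates the pressure growth'' --- misstates where the exponent comes from. In the actual argument, the number $4/3$ appears because the entropy-type estimate $\sup_t\int\rho|\mathbf{u}|^{2+\nu}dx\le C$ is established with $\nu\sim R_T^{-\beta/2}$ (Lemma~\ref{l4}); a factor $s^{-1/2}\lesssim R_T^{\beta/4}$ then arises in the kernel estimate for $J$ (formula~(\ref{kv89})--(\ref{kv813})); and the Zlotnik inequality delivers $R_T^\beta\le C R_T^{\max\{1+\beta/4+3\epsilon,\,(2+\beta)/3,\,(1+\epsilon)\beta/\gamma\}}$, which closes precisely when $\beta>1+\beta/4$, i.e.\ $\beta>4/3$ (Lemma~\ref{l9}). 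This is a bookkeeping of $R_T$-powers coming from the flux representation, not a direct pressure-versus-diffusion comparison in the transport equation for $\log\rho$.

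The higher-order and decay parts of your sketch match the paper in spirit (Lemmas~\ref{gg1}--\ref{gkv2} and the approximation/compactness argument in Section~5), though the paper bypasses an explicit Serrin-type blow-up criterion: it establishes the global solvability directly for smooth, nonvanishing data (Proposition~\ref{51}) and then passes to the limit $\delta\to 0$.
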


The second result establishes the global existence and exponential decay of weak solutions.

\begin{theorem}\la{th1}
Suppose that the conditions of Theorem \ref{th0} hold, with $\n_0 \in W^{1,q}$ in (\ref{ssol1}) replaced by $\n_0 \in L^\infty$.
Then, there exists at least one weak solution $(\n,\mathbf{u})$ of the problem $(\ref{ns})-(\ref{i3})$
within the axisymmetric class in $\OM \times (0,\infty)$ satisfying,
for any $0<T<\infty$ and $1 \le p < \infty$,
\be\la{wsol2}\ba
\begin{cases}
\rho\in L^{\infty}(\OM \times (0,\infty)) \cap C([0,\infty);L^p), \\ 
\mathbf{u} \in L^2(0,\infty;H^1) \cap L^\infty(0,\infty;H^1), \\
t^{1/2}\mathbf{u}_t \in L^2(0,T;L^2), t^{1/2} \na \mathbf{u} \in L^\infty(0,T;L^p).
\end{cases}
\ea\ee

Furthermore, the weak solution $(\n,\mathbf{u})$ satisfies the estimates (\ref{up}) and (\ref{ed}).
\end{theorem}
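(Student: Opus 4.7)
\textbf{Proof proposal for Theorem \ref{th1}.}
The strategy is the classical vanishing-regularization argument: mollify the initial density to enter the hypotheses of Theorem \ref{th0}, obtain a family of strong axisymmetric solutions with uniform bounds, and pass to the limit. Concretely, choose a standard mollifier $\eta_\delta$ and define
$\n_0^\delta = \n_0 \ast \eta_\delta + \delta$ (extended periodically in $x_3$ and averaged in $\theta$ so that axisymmetry and periodicity are preserved), so that $\n_0^\delta\in W^{1,q}$, $\n_0^\delta > 0$, and $\|\n_0^\delta\|_{L^\infty} \le \|\n_0\|_{L^\infty}+\delta$, $\n_0^\delta\to \n_0$ in $L^p$ for every $p<\infty$. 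Take $\mathbf{u}_0^\delta \in \tilde H^1$ axisymmetric and $x_3$-periodic approximating $\mathbf{u}_0$ in $\tilde H^1$, and set $\mathbf{m}_0^\delta=\n_0^\delta \mathbf{u}_0^\delta$. Apply Theorem \ref{th0} to each $\delta$ to obtain a unique strong axisymmetric solution $(\n^\delta,\mathbf{u}^\delta)$ on $\OM\times(0,\infty)$ with the regularity (\ref{ssol2}).

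Next, I would harvest the $\delta$-uniform estimates. The constants in (\ref{up}) and (\ref{ed}) depend only on $\ga,\beta,\mu,\|\n_0^\delta\|_{L^\infty},\|\mathbf{u}_0^\delta\|_{H^1},K$, each of which is uniformly bounded; hence $\n^\delta$ is uniformly bounded in $L^\infty(\OM\times(0,\infty))$ and $\mathbf{u}^\delta$ is uniformly bounded in $L^\infty(0,\infty;H^1)$ together with the exponential decay toward $(\ol{\n_0},0)$. From the energy inequality and the proof of Theorem \ref{th0} one also extracts $\sqrt{\n^\delta}\mathbf{u}^\delta_t\in L^2_{loc}$ and, after multiplication by $t^{1/2}$, higher regularity of $\mathbf{u}^\delta$. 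Standard weak compactness then yields (along a subsequence) $\n^\delta\rightharpoonup \n$ weakly-$\ast$ in $L^\infty$, $\mathbf{u}^\delta\rightharpoonup \mathbf{u}$ weakly-$\ast$ in $L^\infty(0,T;H^1)$ and weakly in $L^2(0,T;H^1)$, and $t^{1/2}\mathbf{u}^\delta_t\rightharpoonup t^{1/2}\mathbf{u}_t$ weakly in $L^2(0,T;L^2)$. Passage to the limit in the linear terms and in the continuity equation is immediate using Aubin--Lions (with $\n^\delta_t = -\divg(\n^\delta\mathbf{u}^\delta)$ bounded in $L^\infty(0,T;H^{-1})$), giving $\n^\delta\to \n$ in $C([0,T];H^{-1})$ and $\n^\delta\mathbf{u}^\delta \to \n\mathbf{u}$ in suitable compact sense, and then $\n^\delta\mathbf{u}^\delta\otimes\mathbf{u}^\delta\to \n\mathbf{u}\otimes\mathbf{u}$ in the sense of distributions once strong convergence of $\mathbf{u}^\delta$ in $L^2(0,T;L^2)$ is established via $\n$-weighted Lions--Aubin arguments using $\sqrt{\n^\delta}\mathbf{u}^\delta_t\in L^2$.

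The main obstacle is the passage to the limit in the nonlinear density terms $(\n^\delta)^\ga$ and $(\n^\delta)^\beta\divg\mathbf{u}^\delta$, which requires almost-everywhere convergence of $\n^\delta$. I plan to handle this by the effective viscous flux technique. Define
\[
F^\delta = (2\mu + (\n^\delta)^\beta)\,\divg \mathbf{u}^\delta - (\n^\delta)^\ga + \ol{(\n^\delta)^\ga} - \ol{(2\mu+(\n^\delta)^\beta)\divg\mathbf{u}^\delta}.
\]
Using the momentum equation, $F^\delta$ satisfies an elliptic equation modulo a time derivative, from which one derives $\delta$-uniform $L^2_tH^1_x$-type bounds on $F^\delta$ and weak continuity of the product $F^\delta \n^\delta$ in the spirit of Lions--Feireisl (the key commutator estimate adapted to the axisymmetric cylinder with slip boundary, as in \cite{FLL,FJL}). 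Combined with the uniform $L^\infty$ bound on $\n^\delta$ and the renormalized continuity equation for weak limits, this yields $\n^\delta \to \n$ in $C([0,T];L^p)$ for every $p<\infty$, which suffices to pass to the limit in both $(\n^\delta)^\ga$ and $(\n^\delta)^\beta\divg\mathbf{u}^\delta$. The structural condition $\beta>4/3$ enters precisely here, guaranteeing that $(\n^\delta)^\beta\divg\mathbf{u}^\delta$ is controlled in $L^s$ for some $s>1$ uniformly in $\delta$ (via the higher integrability of $\divg\mathbf{u}^\delta$ established in Theorem \ref{th0}).

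Finally, the limit $(\n,\mathbf{u})$ is a weak axisymmetric solution of (\ref{ns})--(\ref{i3}) with the regularity (\ref{wsol2}). The bound (\ref{up}) passes to the limit by weak-$\ast$ lower semicontinuity of the $L^\infty$-norm, and (\ref{ed}) passes to the limit by Fatou applied on each time slice together with the $\delta$-uniform exponential rate $\alpha_0$ (which depends only on the quantities already controlled uniformly in $\delta$). Uniqueness is not claimed, so only existence of at least one weak solution need be extracted.
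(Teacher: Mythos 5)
Your proposal follows essentially the same route as the paper, which simply cites the compactness techniques of \cite{HL2,VK} and asserts the argument parallels the proof of Theorem \ref{th0}: mollify $\n_0$, apply Theorem \ref{th0} to obtain a family of strong axisymmetric solutions with $\delta$-uniform bounds (the constants in Lemmas \ref{l9}, \ref{gg1}, and \ref{kve} depend only on $\|\n_0\|_{L^\infty}$ and $\|\mathbf{u}_0\|_{H^1}$, not on $\|\n_0\|_{W^{1,q}}$, so they survive the limit), then pass to the limit using the effective-viscous-flux compactness machinery. One small misattribution worth flagging: $\beta>\tfrac{4}{3}$ is not what makes the compactness step work — it is used in Section 3 (Lemma \ref{l9}) to close the $\delta$-uniform $L^\infty$ bound on the density, and once $\|\n^\delta\|_{L^\infty(\OM\times(0,T))}\le C$ is established independently of $\delta$, the product $(\n^\delta)^\beta\div\mathbf{u}^\delta$ is automatically controlled in $L^2(\OM\times(0,T))$ uniformly, for any $\beta>0$.
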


Finally, similar to \cite{CL,LX}, we can deduce from $(\ref{ed})$
the following large-time behavior of the spatial gradient of the density for the strong solution in Theorem $\ref{th0}$ when vacuum states appear initially.   
\begin{theorem}\la{th3}
In addition to the assumptions in Theorem \ref{th0}, 
we further assume that there exists some point $x_0\in \OM$ such that $\n_0(x_0)=0$. 
Then for any $r>2$, there exists a positive constant $C$ depending only on
$r$, $\ga$, $\beta$, $\mu$, $\| \mathbf{u}_0 \|_{H^1}$, $\| \rho_0 \|_{L^1\cap L^\infty}$,
and $K$, such that for any $t \ge 1$
\be\la{pbu0}\ba
\| \na \n(\cdot ,t) \|_{L^r} \ge C e^{\alpha_0 \frac{r-2}{r} t }.
\ea\ee
\end{theorem}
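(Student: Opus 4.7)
The plan combines two competing facts: initial vacuum is preserved along particle trajectories, while Theorem \ref{th0} forces $\n(\cdot,t)$ to converge exponentially in $L^p$ to the strictly positive constant $\ol{\n_0}$. A two-dimensional Gagliardo-Nirenberg interpolation then converts this $L^2$-smallness, together with the pointwise obstruction at a trajectory point, into an exponential blow-up of $\|\na\n\|_{L^r}$. Concretely, the regularity of $\mathbf{u}$ in (\ref{ssol2}) (in particular $\na\mathbf{u}\in L^1(0,T;C^\alpha)$ via $W^{2,q}\hookrightarrow C^{1,\alpha}$ for $q>3$) permits the global definition of the particle trajectory $\dot X=\mathbf{u}(X,t)$, $X(0)=x_0$; integrating the continuity equation along it yields $\n(X(t),t)=\n_0(x_0)\exp\big(-\int_0^t\div\mathbf{u}(X(s),s)\,ds\big)=0$, and combined with mass conservation $\ol{\n}(t)=\ol{\n_0}$ this gives $\|\n(\cdot,t)-\ol{\n_0}\|_{L^\infty(\OM)}\geq \ol{\n_0}>0$ uniformly in $t$.

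Next, I would reduce the interpolation step to two dimensions by exploiting axisymmetry. Since $\n=\n(r,z,t)$ and $r\in(1,2)$ is bounded away from $0$ on $D$, the change of variables $\int_\OM|\cdot|^p\,dx=2\pi\int_D|\cdot|^p r\,dr\,dz$ makes the $L^p(\OM)$-norms of axisymmetric scalars equivalent to unweighted $L^p(D)$-norms, and because the angular component of $\na\n$ vanishes, the same equivalence holds for $\|\na\n\|_{L^r}$. The two-dimensional Gagliardo-Nirenberg inequality on the bounded Lipschitz domain $D$ then gives, for any $r>2$,
\[
\|\n-\ol{\n}_D\|_{L^\infty(D)}\leq C\|\na\n\|_{L^r(D)}^{\theta}\|\n-\ol{\n}_D\|_{L^2(D)}^{1-\theta},\qquad \theta=\frac{r}{2(r-1)},
\]
where $\ol{\n}_D$ denotes the unweighted mean on $D$. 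A direct computation shows $|\ol{\n}_D-\ol{\n_0}|\leq C\|\n-\ol{\n_0}\|_{L^2(D)}$ (the difference equals the integral of $\n-\ol{\n_0}$ against a fixed zero-mean weight on $D$), so by the triangle inequality
\[
\|\n-\ol{\n_0}\|_{L^\infty(\OM)}\leq C\|\na\n\|_{L^r(\OM)}^{\theta}\|\n-\ol{\n_0}\|_{L^2(\OM)}^{1-\theta}+C\|\n-\ol{\n_0}\|_{L^2(\OM)}.
\]

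Finally, inserting the decay $\|\n-\ol{\n_0}\|_{L^2}\leq Ce^{-\alpha_0 t}$ from (\ref{ed}) and using the uniform lower bound $\ol{\n_0}\leq\|\n-\ol{\n_0}\|_{L^\infty}$ established above, the second term is negligible for $t\geq 1$, leaving $\ol{\n_0}\leq C\|\na\n\|_{L^r}^\theta e^{-\alpha_0(1-\theta)t}$. Solving for $\|\na\n\|_{L^r}$ and using the identity $(1-\theta)/\theta=(r-2)/r$ yields $\|\na\n(\cdot,t)\|_{L^r}\geq Ce^{\alpha_0(r-2)t/r}$, which is exactly (\ref{pbu0}). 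I expect the main technical point to be the clean replacement of the $D$-mean $\ol{\n}_D$ by the prescribed constant $\ol{\n_0}$ so that the rate turns out precisely $(r-2)/r$; the characteristic transport step along the flow of $\mathbf{u}$ and the 2D Gagliardo-Nirenberg estimate itself are classical, and the exponential decay (\ref{ed}) from Theorem \ref{th0} does the heavy lifting.
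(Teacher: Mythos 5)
The paper omits the proof of Theorem~\ref{th3}, stating only that it is ``similar to that of \cite[Theorem 1.2]{CL}.'' Your proposal recovers the intended argument, and correctly identifies the adaptation that is \emph{not} a straight copy of Cai--Li: because the problem reduces to the two-dimensional annular cross-section $D$ via axisymmetry, one must use the two-dimensional Gagliardo--Nirenberg interpolation ($\theta = r/(2r-2)$, valid for all $r>2$, giving the rate $(1-\theta)/\theta=(r-2)/r$). A direct application of the three-dimensional interpolation, as in Cai--Li's setting, would only give the weaker rate $2(r-3)/(3r)$ and require $r>3$, which would not match~(\ref{pbu0}). Your vacuum-propagation step, the use of mass conservation, and the norm equivalences $L^p(\OM)\simeq L^p(D)$ for axisymmetric functions on a domain away from the axis are all correct and match the paper's setup.

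There is one loose end in your final step. You replace the $D$-mean $\ol{\n}_D$ by $\ol{\n_0}$ and claim the resulting additive error $C\|\n-\ol{\n_0}\|_{L^2}\le Ce^{-\alpha_0 t}$ is ``negligible for $t\ge 1$.'' That is not literally true: at $t=1$ this error is a fixed constant that need not be smaller than $\ol{\n_0}$, so your final inequality is only effective for $t$ larger than some threshold $T_0$ depending on the constants; the interval $[1,T_0]$ then needs a separate argument (e.g.\ a compactness/continuity lower bound on $\|\na\n\|_{L^r}$), which also risks making the constant $C$ in~(\ref{pbu0}) depend on more data than claimed. The cleaner route, which avoids this entirely, is to not correct the mean at all: since $r\le 2$ on $D$, one has
$\ol{\n}_D = \frac{1}{|D|}\int_D \n\,dr\,dz \ge \frac{1}{2|D|}\int_D \n\,r\,dr\,dz = \frac{|\OM|\,\ol{\n_0}}{4\pi|D|}>0$
uniformly in $t$, while $\n(X(t),t)=0$ gives $\ol{\n}_D\le\|\n-\ol{\n}_D\|_{L^\infty(D)}$, and $\|\n-\ol{\n}_D\|_{L^2(D)}\le C\|\n-\ol{\n_0}\|_{L^2(\OM)}\le Ce^{-\alpha_0 t}$. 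Feeding these into your 2D Gagliardo--Nirenberg bound (whose zero-mean hypothesis is now satisfied exactly, with no residual term) immediately gives $\|\na\n\|_{L^r}\ge Ce^{\alpha_0(r-2)t/r}$ for all $t\ge 1$, with a constant of the stated dependence. With this modification, your argument is complete and correct.
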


A few remarks are in order.

\begin{remark}\la{lrk1}
For bounded domains, the usual Navier-type slip condition can be stated as follows:
\be\la{NSB}\ba
\mathbf{u} \cdot n = 0, \quad ( 2D(\mathbf{u})n + \vartheta \mathbf{u} )_{ \textnormal{tan} } =0 \  \textnormal{ on } \p \OM,
\ea\ee
where $\vartheta$ is a scalar friction function that measures the tendency of the fluid to slip on the boundary,
and the symbol $\mathbf{w}_\textup{tan}$ represents the projection of tangent plane of the vector $\mathbf{w}$ on $\p \OM$.
As shown in \cite[Remark 1.1]{CL}, the Navier-type slip condition (\ref{NSB})
is a special case of the slip boundary condition (\ref{i3}).
\end{remark}

\begin{remark}\la{lrk2}
Under the assumptions of Theorem \ref{th0}, if the initial data $(\n_0,\mathbf{m}_0)$ further satisfy for some $q>2$,
\be\la{csol1}\ba
0\le \n_0 \in W^{2,q},\quad \mathbf{u}_0 \in H^2 \cap \tilde{H}^1,
\quad \mathbf{m}_0(x) = \n_0 \mathbf{u}_0,
\ea\ee
and the compatibility condition
\be\la{csol2}\ba
- \mu \Delta \mathbf{u}_0 - \nabla( (\mu + \lm(\n_0) ) \div \mathbf{u}_0) +  \nabla P(\n_0)=\n_0^{1/2}g, \ \textnormal{ for some } g \in L^2,
\ea\ee
then the strong solution obtained in Theorem \ref{th0} becomes a classical one for positive time.
The detailed proofs follow arguments similar to those in \cite{JWX1,LZZ,HL,HLX2}.
\end{remark}

\begin{remark}\la{lrk3}
Theorems \ref{th0} and \ref{th1} improve the results of Wang-Li-Guo \cite{WLG},
who studied the problem (\ref{ns})--(\ref{i30}) in a periodic domain excluding the axis.
Under the assumptions that $\beta>2$ and the initial density is strictly positive,
they proved that the system (\ref{ns})--(\ref{i30}) admits a unique
global axisymmetric classical solution $(\n,\mathbf{u})$ with $u_\theta=0$.
\end{remark}

\begin{remark}\la{lrk4}
It is worth noting that under the assumption of axisymmetry and the condition that the domain $\OM$ excludes the axis,
our problem effectively reduces to a two-dimensional case.
Hence, our results are consistent with those for the two-dimensional case in \cite{FLW,FLL,FMN,HL2}.
\end{remark}

We now make some comments on the analysis of this paper. 
For smooth initial data away from vacuum, the local well-posedness of strong solutions to the problem (\ref{ns})--(\ref{i3})
was established in \cite{SS,SVA}.
To extend the strong solution globally in time while allowing vacuum,
we need to derive global a priori estimates for smooth solutions to (\ref{ns})--(\ref{i3})
in suitable higher order norms, independent of the lower bound of the initial density.
Motivated by \cite{HL2,FLL,FLW}, we find that the key issue is to obtain the uniform upper bound of the density.
First, by combining the two-dimensional Gagliardo-Nirenberg inequality
with axisymmetry and the fact that the domain excludes the axis,
we establish Gagliardo-Nirenberg-Sobolev inequalities in the three-dimensional axisymmetric domain $\OM$ analogous to the two-dimensional case.
These inequalities play a crucial role in subsequent estimates.
On the other hand, since the domain excludes the axis, it is multi-connected.
As shown in \cite{WWV}, the usual div-curl type estimate
\be\ba\nonumber
\| \na v \|_{L^2} \le C \left( \| \div v \|_{L^2} + \| \curl v \|_{L^2} \right)
\quad \text{ for } v \in H^1 \text{ with } (v \cdot n)|_{\p \OM}=0,
\ea\ee
no longer holds. This poses an obstacle to our analysis.
To overcome this difficulty, based on \cite{AACG,CL}, we establish the following estimate (see Lemmas \ref{dltdc1} and \ref{dltdc2}):
\be\la{dddc}\ba
\| \na v \|^2_{L^2} \le C \left( 2 \| \div v \|^2_{L^2} + \| \curl v \|^2_{L^2} + \int_{\p \OM} v \cdot K \cdot v ds \right),
\ea\ee
provided $v \in H^1$ with $(v \cdot n)|_{\p \OM}=0$ and
$K$ satisfies the assumptions in Theorem \ref{th0}.
By virtue of (\ref{dddc}), we first derive the standard energy estimate (\ref{kv01}).
Then, combining this with Lemma \ref{ewgn} and following a procedure
analogous to the proof in \cite{FLW},
we obtain the time-uniform estimates (\ref{yzgj1}) and (\ref{yzgj2}).
These estimates are essential to derive the time-uniform bound of the density.

As in \cite{HL2,FLL,FLW}, the key to obtaining the upper bound of the density is to estimate the $L^\infty$-norm of the effective viscous flux $G$ (see (\ref{gw}) for its definition).
In view of the slip boundary conditions and $(\ref{ns})_2$, we deduce that $G$ satisfies the elliptic equation (\ref{evf1}).
Using axisymmetry, we transform equation (\ref{evf1}) into its two-dimensional form (\ref{evf2}).
Subsequently, with the help of Green's function for the two-dimensional unit disk and a conformal mapping, we derive the pointwise estimate of $G$ (see Lemmas \ref{l7} and \ref{ll7}).
Through a series of careful calculations, we obtain the desired time-uniform upper bound of the density, provided $\beta>\frac{4}{3}$; see Lemma \ref{l9} and its proof.

Furthermore, in deriving the preceding estimates, the treatment of
boundary terms relies crucially on two key observations from \cite{CL}:
\be\la{bjds}\ba
\mathbf{u} = -(\mathbf{u} \times n) \times n \triangleq \mathbf{u}^\bot \times n,
\quad (\mathbf{u} \cdot \na) \mathbf{u} \cdot n=-(\mathbf{u} \cdot \na) n \cdot \mathbf{u} \quad \text{ on } \p \OM,
\ea\ee
which hold under the condition that $\mathbf{u} \cdot n = 0$ on $\p \OM$.
Finally, using the upper bound of the density established above and following arguments similar to those in \cite{FLL,HL2,LLL},
we derive the exponential decay and higher-order derivative estimates of the solution, allowing us to extend the local solution globally.

The rest of this paper is structured as follows:
Section 2 introduces some known facts and essential inequalities for the subsequent analysis.
Section 3 focuses on deriving the time-uniform upper bound of the density.
Section 4 establishes higher-order derivative estimates based on the previously obtained density bound.
Finally, Section 5 presents the proofs of the main results, Theorems \ref{th0}--\ref{th3}.

\section{Preliminaries}
This section collects essential known facts and inequalities that will be used throughout this paper.

We begin with the following local existence result of the strong solution;
its proof can be found in \cite{SS,SVA}.
\begin{lemma}\la{lct}
Assume that the initial data $\left( \n_0,\mathbf{m}_0 \right)$ satisfies
\be\la{lct01}\ba
\n_0 \in H^2, \  \inf\limits_{x\in\OM}\n_0(x) >0, \  \mathbf{u}_0 \in H^2 \cap \tilde{H}^1, \ \mathbf{m}_0 = \n_0 \mathbf{u}_0.
\ea\ee
Then there is a small time $T>0$ and a constant $C_0>0$ both depending only on
$\mu$, $\ga$, $\beta$, $K$, $\| \n_0\|_{H^2}$, $\| \mathbf{u}_0 \|_{H^2}$,
and $\inf\limits_{x\in\OM}\n_0(x)$,
such that the problem $(\ref{ns})-(\ref{i3})$ admits a unique strong solution $(\n,\mathbf{u})$ in $\OM \times (0,T]$ satisfying
\be\la{lct1}\ba
\begin{cases}
\rho\in C([0,T];H^2), \quad \n_t\in C([0,T];H^1), \\
\mathbf{u} \in L^2(0,T; H^3), \quad \mathbf{u}_t \in L^2(0,T;H^2) \cap H^1(0,T;L^2),
\end{cases} 
\ea\ee
and
\be\la{lct2}\ba
\inf\limits_{(x,t)\in \OM \times (0,T)}\n(x,t) \ge C_0 >0.
\ea\ee
\end{lemma}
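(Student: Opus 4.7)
The plan is to use a linearization-plus-fixed-point argument in the spirit of the classical Solonnikov construction, adapted to the density-dependent bulk viscosity $\lm(\n)=\n^\beta$ and the slip boundary conditions (\ref{i3}).

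First, I would build an iteration sequence $(\n^{k+1},\mathbf{u}^{k+1})$ from $(\n^k,\mathbf{u}^k)$, starting from $(\n^0,\mathbf{u}^0)=(\n_0,\mathbf{u}_0)$. Given $\mathbf{u}^k$, solve the linear transport equation
$$
\n^{k+1}_t+\mathbf{u}^k\cdot\na \n^{k+1}+\n^{k+1}\divg \mathbf{u}^k=0,\qquad \n^{k+1}|_{t=0}=\n_0,
$$
via the method of characteristics. Since $\mathbf{u}^k\cdot n=0$ on $\p\OM$, the flow stays inside $\OM$, and standard transport estimates yield $\n^{k+1}\in C([0,T];H^2)$ together with the pointwise lower bound $\inf_\OM \n^{k+1}(\cdot,t)\ge (\inf_\OM \n_0)\exp(-\int_0^t\|\divg \mathbf{u}^k\|_{L^\infty})$, which is preserved on a short time interval as long as $\|\na \mathbf{u}^k\|_{L^1(0,T;L^\infty)}$ is controlled.

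With $\n^{k+1}$ fixed, I would then solve the linear parabolic momentum system
$$
\n^{k+1}\mathbf{u}^{k+1}_t+\n^{k+1}(\mathbf{u}^k\cdot\na)\mathbf{u}^{k+1}-\mu\lap \mathbf{u}^{k+1}-\na((\mu+\lm(\n^{k+1}))\divg \mathbf{u}^{k+1})=-\na P(\n^{k+1}),
$$
subject to the slip conditions (\ref{i3}) and initial data $\mathbf{u}_0$. A Galerkin approximation using eigenfunctions of the Lam\'e operator under (\ref{i3}), combined with elliptic regularity, produces $\mathbf{u}^{k+1}\in L^2(0,T;H^3)\cap L^\infty(0,T;\tilde{H}^1)$ and $\mathbf{u}^{k+1}_t\in L^2(0,T;H^2)\cap H^1(0,T;L^2)$. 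At this step the boundary identities (\ref{bjds}) together with the improved div-curl estimate (\ref{dddc}) are what provide the full $H^1$-coercivity of the energy despite the slip (rather than Dirichlet) conditions, while a time-differentiation of the equation and a Bogovskii-type inversion of the divergence are used to propagate $H^2$-regularity of $\mathbf{u}^{k+1}$.

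Uniform bounds for $(\n^{k+1},\mathbf{u}^{k+1})$ in the space appearing in (\ref{lct1}) on a short interval $[0,T]$, with $T$ depending only on the constants listed in the lemma, then set up a contraction of the iteration map in the weaker norm $L^\infty(0,T;L^2)\times L^2(0,T;H^1)$, yielding a limit $(\n,\mathbf{u})$ satisfying (\ref{lct1})--(\ref{lct2}); uniqueness follows from applying the same $L^2$-energy identity to the difference of two hypothetical solutions. The main obstacle I expect is the density dependence of the bulk viscosity: closing the higher-order energy estimate for $\mathbf{u}^{k+1}$ requires controlling commutator terms produced when derivatives are moved across $\lm(\n^{k+1})=(\n^{k+1})^\beta$, and this is precisely where both the $H^2$-regularity of $\n^{k+1}$ and the strict positive lower bound (preserved only on a small time interval) are essential. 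Because the analogous construction under Dirichlet or fully detailed slip boundary data is carried out in \cite{SS,SVA}, the actual execution amounts to verifying that their argument is compatible with the mixed boundary operator $\curl \mathbf{u}\times n=-K\mathbf{u}$ appearing in (\ref{i3}), which is routine given (\ref{dddc}).
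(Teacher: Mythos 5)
The paper does not give its own proof of this lemma: immediately above the statement it writes ``its proof can be found in \cite{SS,SVA}'' and simply cites Salvi--Stra\v skraba and Solonnikov. So there is no internal argument to compare against; what you have produced is a reconstruction of the standard linearization--fixed-point scheme that those references carry out.

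As a reconstruction it is sound in outline and matches the expected structure. The iteration (characteristics for the continuity equation, then a linear parabolic solve for the momentum equation with coefficients frozen from the previous step), the preservation of the density lower bound via the $\exp(-\int_0^t\|\divg\mathbf{u}^k\|_{L^\infty})$ factor, the contraction in a lower-order norm, and the identification of the two genuine technical points --- commutators produced by $\lm(\n)=\n^\beta$, and $H^1$-coercivity under the slip condition (\ref{i3}) via (\ref{dddc}) rather than Korn/Poincar\'e for Dirichlet data --- are all the right issues. Two small remarks. First, (\ref{dddc}) as stated requires $K+2D(n)$ to be positive definite on a set of positive measure (the hypothesis of Theorem \ref{th0}), whereas Lemma \ref{lct} is stated for a general symmetric $K$; for the short-time theory one does not actually need the uniform-in-time coercivity and can instead absorb the boundary term into the Gronwall factor, but this is worth being explicit about since the local lemma is later used to start the continuation argument with arbitrary $K$. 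Second, the paper uses this lemma in tandem with Lemma \ref{dcx} to propagate axisymmetry; your construction produces a general strong solution, and axisymmetry then comes from uniqueness plus invariance, exactly as the paper separates it. Neither point is a gap in your proposal, but both are worth flagging if one were to write this out in full.
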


Based on \cite[Lemma 2]{GWX} and the rotation and transformation invariance of (\ref{ns})--(\ref{i2}), we obtain the following result:
\begin{lemma}\la{dcx}
Assume that the initial data is axisymmetric and periodic in $x_3$ with period $1$.
Then the local strong solution of $(\ref{ns})-(\ref{i3})$ is also axisymmetric and periodic in $x_3$ with period $1$.
\end{lemma}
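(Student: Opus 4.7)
The plan is to exploit the uniqueness of the local strong solution from Lemma \ref{lct} together with the invariance of the system $(\ref{ns})$--$(\ref{i3})$ under rotations about the $x_3$-axis and translations in $x_3$ by the period, exactly in the spirit of \cite{GWX}. Let $(\n,\mathbf{u})$ denote the local strong solution on $\OM\times(0,T]$ provided by Lemma \ref{lct}.

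First I would verify that the problem is equivariant under the relevant symmetries. For any fixed angle $\theta_0\in[0,2\pi)$, let $R_{\theta_0}$ be the rotation of $\rrr$ about the $x_3$-axis by $\theta_0$, and define
\be\ba\nonumber
\tilde\n(x,t) := \n(R_{\theta_0}^{-1} x,t),\qquad
\tilde{\mathbf{u}}(x,t) := R_{\theta_0}\mathbf{u}(R_{\theta_0}^{-1} x,t).
\ea\ee
Because the cylindrical domain $\OM=A\times\T$, the outward normal $n$, and the matrix $K$ are all rotationally symmetric, and because the differential operators $\div$, $\na$, $\Delta$ and $\curl$ commute with the rotation in the sense that, e.g., $\div\tilde{\mathbf{u}}=(\div\mathbf{u})\circ R_{\theta_0}^{-1}$ and $(\Delta\tilde{\mathbf{u}})(x)=R_{\theta_0}(\Delta\mathbf{u})(R_{\theta_0}^{-1}x)$, the pair $(\tilde\n,\tilde{\mathbf{u}})$ still satisfies system $(\ref{ns})$ with pressure law $(\ref{i1})$ and viscosity law $(\ref{i2})$, as well as the slip boundary condition $(\ref{i3})$. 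In parallel, for the $x_3$-translation by $1$, define $(\tilde\n,\tilde{\mathbf{u}})(x_1,x_2,x_3,t):=(\n,\mathbf{u})(x_1,x_2,x_3+1,t)$; by the same token this is again a solution of $(\ref{ns})$--$(\ref{i3})$.

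Next, at $t=0$ the hypothesis gives that $\n_0$ and $\mathbf{u}_0$ are axisymmetric and $1$-periodic in $x_3$, so $(\tilde\n,\tilde{\mathbf{u}})\vert_{t=0}=(\n_0,\mathbf{u}_0)$ in both constructions. Moreover, $(\tilde\n,\tilde{\mathbf{u}})$ inherits the same regularity $(\ref{lct1})$ and the same positive lower bound $(\ref{lct2})$ as $(\n,\mathbf{u})$, since these are preserved by isometries of $\OM$. Hence $(\tilde\n,\tilde{\mathbf{u}})$ and $(\n,\mathbf{u})$ are two strong solutions of the same initial-boundary value problem in the class where Lemma \ref{lct} asserts uniqueness. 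Applying that uniqueness yields $(\tilde\n,\tilde{\mathbf{u}})\equiv(\n,\mathbf{u})$ on $\OM\times[0,T]$. Choosing $\theta_0$ arbitrary gives axisymmetry of $\n$ and of the components of $\mathbf{u}$ in the $(\mathbf{e}_r,\mathbf{e}_\theta,\mathbf{e}_z)$ basis (i.e. the form $(\ref{sdc})_2$), and the translation argument yields $1$-periodicity in $x_3$.

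The only genuinely delicate point, which I would write out carefully, is the coordinate-wise verification that $\tilde{\mathbf{u}}$ indeed solves $(\ref{ns})_2$ and satisfies $(\ref{i3})$; this reduces to checking that the momentum equation is covariant under $\mathbf{u}\mapsto R_{\theta_0}\mathbf{u}\circ R_{\theta_0}^{-1}$ (using $\na P(\tilde\n)=R_{\theta_0}(\na P(\n))\circ R_{\theta_0}^{-1}$ and analogous identities for the viscous terms), and that $K\circ R_{\theta_0}=R_{\theta_0}K$ as fields on $\p\OM$, which is exactly the rotational symmetry of $K$ built into the geometry of the cylinder. Everything else is bookkeeping, so no real obstacle is expected; the argument parallels \cite[Lemma 2]{GWX}.
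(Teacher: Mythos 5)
Your proof is correct and follows essentially the same approach the paper invokes (citing \cite{GWX}): exploit the equivariance of the system $(\ref{ns})$--$(\ref{i3})$ under rotations about the $x_3$-axis and under the $x_3$-translation by one period, then conclude via the uniqueness in Lemma \ref{lct}. One small caveat worth flagging: the needed rotational equivariance of the boundary operator is $K(R_{\theta_0}x) = R_{\theta_0}\,K(x)\,R_{\theta_0}^{-1}$ on $\partial\OM$, and this is an additional (implicit) hypothesis on $K$ rather than a consequence of the cylindrical geometry --- a generic smooth symmetric matrix field on $\partial\OM$ need not satisfy it --- but the paper tacitly assumes this as well, so there is no gap relative to the paper's argument.
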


We recall the following Gagliardo-Nirenberg inequality from \cite{TG}.
\begin{lemma}\la{gn1}
Let $D$ be a bounded Lipschitz domain in $\rr$.
For $p\in[2,\infty)$, there exists a positive constant C depending only on $D$ such that for any $v\in H^1(D)$,
\be\ba\la{gn11}
\| v \|_{L^p(D)} \le C p^{1/2}\| v \|^{2/p}_{L^2(D)} \| v \|^{1-2/p}_{H^1(D)}.
\ea\ee
\end{lemma}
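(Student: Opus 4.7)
My plan is to reduce the inequality to a bounded ball in $\mathbb{R}^2$ and then extract both the sharp $p^{1/2}$ constant and the correct $L^2$--$H^1$ split by combining the Moser-Trudinger borderline embedding with a log-convexity interpolation. Since $D$ is a bounded Lipschitz domain, a standard extension operator $E\colon H^1(D)\to H^1(\mathbb{R}^2)$ together with a smooth cutoff lets me replace $v$ by a function $w$ supported in a fixed ball $B\supset D$ with $\|w\|_{H^1(B)}\le C_D\|v\|_{H^1(D)}$ and $w|_D=v$; it therefore suffices to prove the claimed bound for $w$ on $B$.

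Next, I invoke the Moser-Trudinger inequality on $B$: there exist constants $\alpha_0,C_0>0$ depending only on $B$ such that
\begin{equation*}
\int_B \exp\!\left(\alpha_0\,\frac{|w|^2}{\|w\|_{H^1(B)}^{2}}\right)dx \le C_0
\end{equation*}
for every nonzero $w\in H^1(B)$. Expanding the exponential in a Taylor series and comparing coefficients yields $\|w\|_{L^{2k}(B)}^{2k} \le C_0\,\alpha_0^{-k}\,k!\,\|w\|_{H^1(B)}^{2k}$ for every positive integer $k$. Stirling's formula gives $(k!)^{1/(2k)}\le C\sqrt{k}$, hence $\|w\|_{L^{2k}(B)} \le C\sqrt{k}\,\|w\|_{H^1(B)}$; log-convexity of $r\mapsto\|w\|_{L^r}$ then extends this to $\|w\|_{L^q(B)} \le C\sqrt{q}\,\|w\|_{H^1(B)}$ for all $q\in[2,\infty)$.

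Finally, to introduce the factor $\|v\|_{L^2}^{2/p}$ I interpolate between $L^2$ and $L^q$:
\begin{equation*}
\|v\|_{L^p(D)} \le \|v\|_{L^2(D)}^{\theta}\,\|v\|_{L^q(D)}^{1-\theta},\qquad \frac{1}{p}=\frac{\theta}{2}+\frac{1-\theta}{q}.
\end{equation*}
Choosing $q$ of order $p^2$ forces $\theta=2/p+O(1/p^2)$ and $1-\theta=1-2/p+O(1/p^2)$, while $q^{(1-\theta)/2}=O(p^{1/2})$. Substituting the bound for $\|v\|_{L^q}$ from the previous step and tracking exponents yields the asserted inequality for large $p$; for $p$ in a bounded range, a direct log-convexity bound between $L^2$ and $L^{4}$ combined with the embedding $H^1(D)\hookrightarrow L^4(D)$ closes the small-$p$ case and absorbs the finite constant into $p^{1/2}$.

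The main obstacle is the sharp $p^{1/2}$ constant. This is a critical two-dimensional borderline phenomenon: since $H^1(\mathbb{R}^2)\not\hookrightarrow L^\infty$, no $p$-independent bound is available, and the rate $p^{1/2}$ is precisely what Moser-Trudinger's exponential integrability of $|w|^2/\|w\|_{H^1}^{2}$ encodes. Once that bound on $\|w\|_{L^q(B)}$ is secured, the remaining step of splitting the right-hand side into $\|v\|_{L^2}$ and $\|v\|_{H^1}$ factors via log-convexity at $q\sim p^2$ is routine.
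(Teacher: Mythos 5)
The paper does not prove this lemma; it cites Talenti \cite{TG}. Your first part is correct: extension to a fixed ball, Moser--Trudinger, Taylor expansion of the exponential, Stirling, and log-convexity do give $\|w\|_{L^q(B)}\le C\sqrt{q}\,\|w\|_{H^1(B)}$ for all $q\in[2,\infty)$. The gap is in the final interpolation step, and it is fatal for two separate reasons.

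First, the arithmetic is wrong. With $q=p^2$ one has $q^{(1-\theta)/2}=p^{1-\theta}$, and since $\theta=2/p+O(1/p^2)\to 0$ as $p\to\infty$, this is $p^{1-2/p+O(1/p^2)}\sim p$, not $O(p^{1/2})$. So the constant you actually extract is of order $p$, an order of magnitude too large.

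Second, and more fundamentally, for every finite $q>p$ the constraint $1/p=\theta/2+(1-\theta)/q$ forces $\theta$ to be strictly less than $2/p$; equality $\theta=2/p$ would require $q=\infty$. Writing $\theta=2/p-\epsilon$ with $\epsilon>0$,
\[
\|v\|_{L^2}^{\theta}\|v\|_{H^1}^{1-\theta}
=\|v\|_{L^2}^{2/p}\|v\|_{H^1}^{1-2/p}\left(\frac{\|v\|_{H^1}}{\|v\|_{L^2}}\right)^{\epsilon},
\]
and the last factor is unbounded over $v\in H^1(D)\setminus\{0\}$ (highly oscillatory $v$ have $\|v\|_{L^2}/\|v\|_{H^1}$ arbitrarily small). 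So whatever finite constant you extract, the interpolated bound is strictly weaker than the target, and no choice of $q$ can repair this, because the target has the exponent $2/p$ exactly.

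To obtain the exact $2/p$ exponent together with the $p^{1/2}$ constant, the standard route uses dilation invariance on $\mathbb{R}^2$: extend $v$ to a compactly supported $w$, apply the $H^1$ bound to the rescalings $w(\lambda\cdot)$, and optimize over $\lambda>0$; this converts $\|w\|_{H^1}$ into $\|w\|_{L^2}^{2/p}\|\nabla w\|_{L^2}^{1-2/p}$ up to an $O(1)$ factor. But this requires the $H^1$ bound to hold with a constant uniform over all scales, which Moser--Trudinger on a fixed ball does not supply directly. One either needs a subcritical whole-space Moser--Trudinger inequality or the classical Gagliardo--Nirenberg iteration, which is exactly what the paper delegates to \cite{TG}.
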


For three-dimensional axisymmetric functions, we establish the following Gagliardo-Nirenberg-Sobolev inequalities, which play a crucial role in our subsequent analysis.
\begin{lemma}\la{ewgn}
Let $\OM$ be as in $(\ref{om})$,
and let $\mathbf{f}$ and $g$ be vector-valued and scalar axisymmetric functions on $\OM$, respectively.
Then, for any $p\in [2,\infty)$, $q\in[1,2)$, and $r\in(2,\infty)$,
there exists a generic constant $C>0$ that may depend on $q$ and $r$ such that
\be\la{ewgn01}\ba
\| \mathbf{f} \|_{L^p}
\le C p^{1/2} \| \mathbf{f} \|^{\frac{2}{p}}_{L^2}
\| \mathbf{f} \|^{1-\frac{2}{p}}_{H^1},\quad
\| \mathbf{f} \|_{L^{\frac{2q}{2-q}}}
\le C \| \mathbf{f} \|_{W^{1,q}},\quad
\| \mathbf{f} \|_{L^{\infty}}
\le C \| \mathbf{f} \|_{W^{1,r}},
\ea\ee
\be\la{ewgn02}\ba
\| g \|_{L^p}
\le C p^{1/2} \| g \|^{\frac{2}{p}}_{L^2} \| g \|^{1-\frac{2}{p}}_{H^1},\quad
\| g \|_{L^{\frac{2q}{2-q}}} \le C \| g \|_{W^{1,q}},\quad
\| g \|_{L^{\infty}} \le C \| g \|_{W^{1,r}}.
\ea\ee
\end{lemma}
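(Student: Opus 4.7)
The plan is to exploit the axisymmetry together with the fact that $\OM = A \times \T$ is bounded away from the rotation axis $\{r=0\}$, which reduces the three-dimensional inequalities to the two-dimensional ones of Lemma \ref{gn1} on the rectangle $D=\{(r,z):1<r<2,\,0<z<1\}$. Writing $dx = r\,dr\,d\theta\,dz$ and using periodicity in $x_3$, for any axisymmetric scalar $g=g(r,z)$ one has
$$
\|g\|_{L^p(\OM)}^p = 2\pi \int_D |g(r,z)|^p\, r\, dr\, dz,
$$
and since $1\le r\le 2$ the weighted and unweighted $L^p$-norms on $D$ are equivalent up to universal constants. The same computation applied componentwise to an axisymmetric vector field $\mathbf{f}=f_r\mathbf{e}_r+f_\theta\mathbf{e}_\theta+f_z\mathbf{e}_z$ shows that $\|\mathbf{f}\|_{L^p(\OM)}$ is equivalent to $\|(f_r,f_\theta,f_z)\|_{L^p(D)}$.

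Next, I would compare the three-dimensional gradient with the two-dimensional gradient on $D$. For an axisymmetric scalar, $\na g=(\p_r g)\mathbf{e}_r+(\p_z g)\mathbf{e}_z$, so $|\na g|^2=|\p_r g|^2+|\p_z g|^2$ pointwise, whence $\|\na g\|_{L^p(\OM)}\sim \|\na_{r,z}g\|_{L^p(D)}$. For a vector field, the standard cylindrical-coordinate formula gives
$$
|\na \mathbf{f}|^2=\sum_{i\in\{r,\theta,z\}}\bigl((\p_r f_i)^2+(\p_z f_i)^2\bigr)+\frac{f_r^2+f_\theta^2}{r^2},
$$
and because $r\ge 1$ the extra curvature term $(f_r^2+f_\theta^2)/r^2$ is dominated pointwise by $|\mathbf{f}|^2$. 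This shows that $\|\mathbf{f}\|_{H^1(\OM)}$ is equivalent to the $H^1(D)$-norm of the triple of components, and likewise $\|\mathbf{f}\|_{W^{1,q}(\OM)}\sim \|(f_r,f_\theta,f_z)\|_{W^{1,q}(D)}$ for any $1\le q<\infty$.

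Once this reduction to $D$ is in hand, the three claimed inequalities follow component-by-component from the two-dimensional results on the bounded Lipschitz rectangle $D$: the first inequality in each line is Lemma \ref{gn1} applied on $D$ (which supplies the explicit $p^{1/2}$ constant), the second is the Sobolev embedding $W^{1,q}(D)\hookrightarrow L^{2q/(2-q)}(D)$ for $q\in[1,2)$, and the third is Morrey's embedding $W^{1,r}(D)\hookrightarrow L^\infty(D)$ for $r>2$, all of which hold on the planar rectangle $D$. The only place requiring any care is the vector case, where the cylindrical-coordinate Jacobian contains the curvature contributions $f_r/r$ and $f_\theta/r$; however, since $\OM$ excludes the axis and $r$ is bounded below by $1$, these terms are absorbed into $\|\mathbf{f}\|_{L^p}$ and cause no trouble. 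I do not anticipate any genuine obstacle: the proof is essentially a bookkeeping exercise combining axisymmetric cylindrical-coordinate calculus on the slab $\OM=A\times\T$ with the planar Gagliardo--Nirenberg and Sobolev embedding theorems on the rectangle $D$.
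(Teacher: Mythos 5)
Your proof is correct and follows essentially the same approach as the paper: both reduce to the planar domain $D$ via the cylindrical volume element, exploit $1\le r\le 2$ to control the weight $r$ and the curvature term $(f_r^2+f_\theta^2)/r^2$ in the gradient formula, and then invoke Lemma \ref{gn1} plus the two-dimensional Sobolev/Morrey embeddings. The only cosmetic difference is that the paper applies Lemma \ref{gn1} directly to the weighted functions $r^{1/p}f_i$ and then translates back, whereas you first record the equivalence of weighted and unweighted norms on $D$ and then apply the 2D results to the bare components, which is a slightly cleaner bookkeeping of the same idea.
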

\begin{proof}
First, for the axisymmetric vector-valued function $\mathbf{f}$ on $\OM$,
it can be expressed in the standard orthonormal basis as:
\be\ba\la{ewgn100}
\mathbf{f}(x)=f_r(r,z)\mathbf{e}_r + f_\theta(r,z)\mathbf{e}_\theta + f_z(r,z)\mathbf{e}_z,
\ea\ee
which implies that for any $2 \le p <\infty$,
\be\la{ewgn1}\ba
\int_\OM |\mathbf{f}|^p dx = 2 \pi \int_{D} r |\mathbf{f}|^p dr dz
& \le C \int_{D} r \left( |f_r|^p + |f_\theta|^p + |f_z|^p \right) dr dz.
\ea\ee
Moreover, we deduce from (\ref{gn11}) that
\be\la{ewgn2}\ba
\int_{D} r |f_r|^p dr dz
& \le C p^{p/2} \| r^{\frac{1}{p}} f_r \|^2_{L^2(D)} \| r^{\frac{1}{p}} f_r \|^{p-2}_{H^1(D)} \\
& \le C  p^{p/2} \| r^{\frac{1}{p}} f_r \|^p_{L^2(D)}
+ C  p^{p/2} \| r^{\frac{1}{p}} f_r \|^2_{L^2(D)} \| \tilde{\na} (r^{\frac{1}{p}} f_r) \|^{p-2}_{L^2(D)},
\ea\ee
where $\tilde{\na} \triangleq (\p_r,\p_z)$.

A direct computation from (\ref{ewgn100}) gives
\be\la{ewgn3}\ba
| \na \mathbf{f} |^2 = (\p_r f_r)^2 + (\p_z f_r)^2 + (\p_r f_\theta)^2 + (\p_z f_\theta)^2
+ (\p_r f_z)^2 + (\p_z f_z)^2 + \frac{f^2_r + f^2_\theta}{r^2}.
\ea\ee
Combining this with (\ref{ewgn2}) yields
\be\ba\nonumber
\int_{D} r |f_r|^p dr dz
& \le C  p^{p/2} \| r^{\frac{1}{p}} f_r \|^p_{L^2(D)}
+ C  p^{p/2} \| r^{\frac{1}{p}} f_r \|^2_{L^2(D)} \| \tilde{\na} (r^{\frac{1}{p}} f_r) \|^{p-2}_{L^2(D)} \\
& \le C  p^{p/2} \| \mathbf{f} \|^p_{L^2(\OM)} + C  p^{p/2} \| \mathbf{f} \|^2_{L^2(\OM)}
\| \na \mathbf{f} \|^{p-2}_{L^2(\OM)} \\
& \le C  p^{p/2} \| \mathbf{f} \|^2_{L^2(\OM)} \| \mathbf{f} \|^{p-2}_{H^1(\OM)}.
\ea\ee
Similarly, we also have
\be\ba\nonumber
\int_{D} r \left( |f_\theta|^p + |f_z|^p \right) dr dz
\le C  p^{p/2} \| \mathbf{f} \|^2_{L^2(\OM)} \| \mathbf{f} \|^{p-2}_{H^1(\OM)},
\ea\ee
which together with (\ref{ewgn1}) implies
\be\la{ewgn4}\ba
\| \mathbf{f} \|_{L^p(\OM)} 
\le C  p^{1/2} \| \mathbf{f} \|^{\frac{2}{p}}_{L^2(\OM)}
\| \mathbf{f} \|^{1-\frac{2}{p}}_{H^1(\OM)}.
\ea\ee
In addition, by virtue of (\ref{ewgn3}) and Sobolev inequality, we can obtain
\be\la{ewgn5}\ba
\| \mathbf{f} \|_{L^{\frac{2q}{2-q}}(\OM)}
\le C \| \mathbf{f} \|_{W^{1,q}(\OM)},\quad
\| \mathbf{f} \|_{L^{\infty}(\OM)}
\le C \| \mathbf{f} \|_{W^{1,r}(\OM)}.
\ea\ee

On the other hand, for the axisymmetric scalar function $g$ on $\OM$ satisfying $g(x_1,x_2,x_3)=g(r,z)$,
a direct calculation yields:
\be\ba\nonumber
\na g = \p_r g \mathbf{e}_r + \p_z g \mathbf{e}_z,
\ea\ee
which shows that
\be\la{ewgn6}\ba
|\na g|^2=|\p_r g|^2 + |\p_z g|^2.
\ea\ee
Similar to (\ref{ewgn2}), we derive
\be\ba\nonumber
\int_\OM |g|^p dx = 2\pi \int_{D} r |g|^p dr dz
&\le C p^{p/2} \| r^{\frac{1}{p}} g \|^2_{L^2(D)} \| r^{\frac{1}{p}} g \|^{p-2}_{H^1(D)} \\
& \le C p^{p/2} \| r^{\frac{1}{p}} g \|^p_{L^2(D)}
+ C p^{p/2} \| r^{\frac{1}{p}} g \|^2_{L^2(D)} \| \tilde{\na} (r^{\frac{1}{p}} g) \|^{p-2}_{L^2(D)} \\
& \le C p^{p/2} \| g \|^p_{L^2(\OM)} + C p^{p/2} \| g \|^{2}_{L^2(\OM)} \| \na g \|^{p-2}_{L^2(\OM)},
\ea\ee
which implies
\be\la{ewgn7}\ba
\| g \|_{L^p(\OM)} 
\le C p^{1/2} \| g \|^{\frac{2}{p}}_{L^2(\OM)} \| g \|^{1-\frac{2}{p}}_{H^1(\OM)}.
\ea\ee
Furthermore, by (\ref{ewgn6}) and the two-dimensional Sobolev inequality, we have
\be\la{ewgn8}\ba
\| g \|_{L^{\frac{2q}{2-q}}(\OM)}
\le C \| g \|_{W^{1,q}(\OM)},\quad
\| g \|_{L^{\infty}(\OM)}
\le C \| g \|_{W^{1,r}(\OM)}.
\ea\ee
The combination of (\ref{ewgn4}), (\ref{ewgn5}), (\ref{ewgn7}) and (\ref{ewgn8}) yields
(\ref{ewgn01}) and (\ref{ewgn02}) and completes the proof of Lemma \ref{ewgn}.
\end{proof}

The following Poincar\'e type inequality can be found in \cite{F}.
\begin{lemma}\la{pt}
Let $v\in H^1$, and let $\n$ be a non-negative function satisfying
\be\ba\nonumber
0<M_1\leq \int \n dx,\quad \int \n^r dx \leq M_2,
\ea\ee
with $r>1$. Then there exists a positive constant $C$ depending only on 
$M_1$, $M_2$, and $r$ such that
\be\ba\la{pt1}
\|v\|_{L^2}^2 \leq C\int \n |v|^2 dx + C \|\na v\|_{L^2}^2.
\ea\ee
\end{lemma}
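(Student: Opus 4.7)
The plan is to reduce matters to the standard Poincar\'e--Wirtinger inequality by splitting $v=\ol{v}+(v-\ol{v})$, where $\ol{v}=|\OM|^{-1}\int v\,dx$, and then to bound the mean $\ol{v}$ using the weight $\rho$. Since $\OM$ is a bounded Lipschitz domain, the classical Poincar\'e--Wirtinger estimate gives $\|v-\ol{v}\|_{L^2}\le C\|\na v\|_{L^2}$, so once the bound $|\ol{v}|^2\le C\int\rho|v|^2\,dx+C\|\na v\|_{L^2}^2$ is established, the conclusion (\ref{pt1}) follows from $\|v\|_{L^2}^2\le 2\|v-\ol{v}\|_{L^2}^2+2|\OM|\,|\ol{v}|^2$.

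To control the mean, I would exploit $\int\rho\,dx\ge M_1$ to write
\begin{equation*}
M_1|\ol{v}|^2\le |\ol{v}|^2\int\rho\,dx\le 2\int\rho v^2\,dx+2\int\rho(v-\ol{v})^2\,dx.
\end{equation*}
The first term on the right is already in the desired form. For the second, I would apply H\"older's inequality with exponent $r$ together with $\|\rho\|_{L^r}\le M_2^{1/r}$ to get
\begin{equation*}
\int\rho(v-\ol{v})^2\,dx\le M_2^{1/r}\|v-\ol{v}\|_{L^{2r'}}^2,\qquad r'=\tfrac{r}{r-1}.
\end{equation*}
Since $v$ is treated in the axisymmetric framework on the cylindrical domain $\OM$, Lemma \ref{ewgn} supplies the two-dimensional-type embedding $H^1(\OM)\hookrightarrow L^{2r'}(\OM)$ for every $r>1$; combined with Poincar\'e--Wirtinger this yields $\|v-\ol{v}\|_{L^{2r'}}\le C\|\na v\|_{L^2}$, and assembling the pieces gives the claimed bound on $|\ol{v}|^2$.

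The only subtle point is precisely this Sobolev step: in a \emph{general} three-dimensional bounded domain one only has $H^1\hookrightarrow L^6$, so a direct argument would require $2r'\le 6$, i.e.\ $r\ge 3/2$, excluding values of $r$ close to one. This obstacle is dissolved by Lemma \ref{ewgn}, which on axisymmetric functions recovers Sobolev embeddings of two-dimensional type into every $L^p$ with $p<\infty$. For an alternative, cleaner argument one could proceed by contradiction: if the estimate failed there would exist sequences $(v_n,\rho_n)$ with $\|v_n\|_{L^2}=1$ and $\int\rho_n v_n^2\,dx+\|\na v_n\|_{L^2}^2\to 0$; Rellich compactness would then force $v_n\to c$ strongly in $L^2$ for some constant $c$ of unit $L^2$-norm, while weak compactness of $\rho_n$ in $L^r$ together with strong convergence of $v_n^2$ in $L^{r'}$ (again via Lemma \ref{ewgn}) would allow passage to the limit in the weighted term to yield $c^2\int\rho\,dx=0$, contradicting $\int\rho\,dx\ge M_1$.
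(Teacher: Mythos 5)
The paper does not prove this lemma; it simply cites Feireisl \cite{F}. Your direct argument is essentially correct \emph{for axisymmetric $v$}, and you correctly identify the delicate step: the H\"older estimate $\int\rho(v-\ol{v})^2\,dx\le M_2^{1/r}\|v-\ol{v}\|_{L^{2r'}}^2$ needs $H^1\hookrightarrow L^{2r'}$, which in a general three-dimensional domain forces $r\ge 3/2$. You patch this with Lemma~\ref{ewgn}, which is legitimate in the axisymmetric setting of this paper (and indeed every application of Lemma~\ref{pt} in the paper is to axisymmetric functions). However, the lemma as stated makes no axisymmetry assumption on $v$, and the cited result in Feireisl is proved for arbitrary $v\in H^1$ on a bounded Lipschitz domain in any dimension and any $r>1$; so your proof, as written, establishes a narrower statement than the one cited.

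The same restriction propagates to your contradiction argument: you again invoke Lemma~\ref{ewgn} to upgrade $v_n\to c$ to strong convergence of $v_n^2$ in $L^{r'}$. This is not necessary. After Rellich gives $v_n\to c$ in $L^2$ (hence in measure) with $|c|=|\OM|^{-1/2}\ne 0$, one can argue without any Sobolev embedding: set $A_n^\ep=\{|v_n|<|c|-\ep\}$, so $|A_n^\ep|\to 0$, and estimate
\[
\int\rho_n v_n^2\,dx \;\ge\; (|c|-\ep)^2\left(\int\rho_n\,dx - \int_{A_n^\ep}\rho_n\,dx\right)
\;\ge\; (|c|-\ep)^2\left(M_1 - M_2^{1/r}|A_n^\ep|^{1-1/r}\right),
\]
which stays bounded away from $0$ as $n\to\infty$, contradicting $\int\rho_n v_n^2\,dx\to 0$. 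This equi-integrability argument, using only the $L^r$ bound $r>1$, is the standard route and handles the general statement; your axisymmetry-dependent route buys nothing here that the equi-integrability estimate does not already give, while being less robust.
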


The following div-curl estimate will be frequently used in later
arguments and can be found in \cite{AJ,WWV}.
\begin{lemma}\la{dc}
Let $k \ge 0$ be an integer and $1<q<\infty$.
Assume that $\OM$ is a bounded domain in $\rrr$ and its $C^{k+1,1}$ boundary $\p \OM$ only has a finity number of 2-dimensional connected components.
Then, for $v \in W^{k+1,q}(\OM)$ with $(v \cdot n)|_{\p \OM} = 0$ or $(v \times n)|_{\p \OM} = 0$,
there exists a positive constant $C$ depending only on $k$, $q$ and $\OM$ such that
\be\ba\la{dc1}
\| v \|_{W^{k+1,q}(\OM)} \le C\left( \|\div v\|_{W^{k,q}(\OM)} + \| \curl v \|_{W^{k,q}(\OM)} + \| v \|_{L^q(\OM)} \right).
\ea\ee
\end{lemma}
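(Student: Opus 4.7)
The plan is to reduce the estimate to classical elliptic regularity for the vector Laplacian via the identity
\be\nonumber
-\Delta v \;=\; \curl(\curl v) \;-\; \nabla(\div v),
\ee
combined with Agmon--Douglis--Nirenberg theory for the two kinds of boundary conditions.

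I would first dispose of the base case $k=0$. Setting $f := \div v$ and $g := \curl v$, the vector $v$ solves
\be\nonumber
-\Delta v \;=\; \curl g \;-\; \nabla f \quad \text{in } \OM,
\ee
with a single scalar boundary condition, either $v\cdot n = 0$ or $v\times n = 0$ on $\p\OM$. Each has to be supplemented by a compatible second boundary condition to obtain a well-posed system: for $v\cdot n|_{\p\OM}=0$ one adjoins the tangential identity $(\curl v)\times n = g\times n$, and for $v\times n|_{\p\OM}=0$ one adjoins the scalar relation $\div v = f$. In both cases the resulting $3\times 3$ system is ADN-elliptic (at each boundary point one checks the Lopatinskii--Shapiro covering condition in the half-space model), and the standard $L^q$ estimate for such systems yields
\be\nonumber
\|v\|_{W^{1,q}(\OM)} \;\le\; C\bigl(\|f\|_{L^q(\OM)} + \|g\|_{L^q(\OM)} + \|v\|_{L^q(\OM)}\bigr),
\ee
which is exactly (\ref{dc1}) for $k=0$.

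For $k\ge 1$ I would induct on $k$. Interior estimates for higher derivatives of $v$ follow directly from interior elliptic regularity applied to the vector identity above. Near $\p\OM$, using local charts that straighten the boundary (available thanks to the $C^{k+1,1}$ assumption) and applying tangential derivatives $\p_\tau^\alpha$ with $|\alpha|\le k$, one checks that $\p_\tau^\alpha v$ inherits the same boundary condition up to lower-order terms involving the second fundamental form of $\p\OM$. The commutators with $\div$ and $\curl$ likewise produce only lower-order derivatives of $v$ multiplied by curvature-type coefficients. Applying the base estimate to $\p_\tau^\alpha v$ and using interpolation together with the induction hypothesis to absorb those commutator terms into $\|v\|_{L^q}$ closes the induction; the normal derivatives are then recovered from the equations $\div v = f$ and $\curl v = g$ by expressing $\p_n v$ algebraically in terms of $f$, $g$, and $\p_\tau v$.

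The main technical obstacle is the multi-connected geometry: since $\p\OM$ may have several connected components (as it will in our setting $\OM = A\times\T$), the spaces of harmonic Dirichlet and harmonic Neumann fields are finite-dimensional but \emph{nonzero}, so the homogeneous problem $\div v = 0$, $\curl v = 0$ with either boundary condition has a nontrivial kernel. This is precisely the reason the term $\|v\|_{L^q}$ must appear on the right-hand side of (\ref{dc1}); without it, one would have to quotient out a finite-dimensional harmonic subspace. Verifying the Lopatinskii--Shapiro condition for the two augmented systems is the other technical point, but it is a model computation at a fixed boundary point in the half-space and does not depend on the global topology of $\OM$.
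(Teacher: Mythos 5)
The paper does not prove this lemma; it cites it directly from \cite{AJ,WWV}, so there is no proof in the paper to compare against. Your strategy --- recasting the div-curl control as an ADN-elliptic boundary value problem via $-\Delta v = \curl(\curl v) - \nabla(\div v)$, checking Lopatinskii--Shapiro for the two boundary-condition pairs, and inducting on $k$ via tangential differentiation near the boundary --- is the standard route and is essentially what the cited references do. One technical point you should be more careful about in the base case $k=0$: with $f=\div v$ and $g=\curl v$ merely in $L^q$, the source $F=\curl g - \nabla f$ lives only in $W^{-1,q}(\OM)$, so the ADN estimate you invoke must be the one in the scale $\|v\|_{W^{1,q}} \lesssim \|F\|_{W^{-1,q}} + \|v\|_{L^q}$ (plus the trace norms of the boundary data), not the usual $W^{2,q}$-against-$L^q$ version. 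A cleaner alternative, avoiding negative Sobolev spaces altogether, is to treat $\{\div v=f,\ \curl v=g\}$ together with $v\cdot n=0$ (or $v\times n=0$) directly as a \emph{first-order} ADN-elliptic system; the ADN estimate for first-order systems then yields $\|v\|_{W^{1,q}}\le C(\|f\|_{L^q}+\|g\|_{L^q}+\|v\|_{L^q})$ immediately, and the induction proceeds as you describe. Your observation that the $\|v\|_{L^q}$ term is unavoidable because the harmonic Dirichlet/Neumann fields are nontrivial for the multi-connected domain $\OM=A\times\T$ is correct and well placed.
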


The following Lemmas \ref{dltdc1} and \ref{dltdc2} are essential for deriving the uniform estimates.
\begin{lemma}\la{dltdc1}
Let $\OM$ be an axisymmetric and bounded Lipschitz domain in $\rrr$.
Then for $v \in H^1$ with $v \cdot n=0$ on $\p \OM$ and
smooth positive semi-definite $3 \times 3$ symmetric matrix $B$ satisfying
$B>0$ on some $\Sigma \subset \p \OM$ with $|\Sigma|>0$,
there exists a positive constant $\Lambda$ depending only on $\OM$, such that
\be\la{dltdc01}\ba
\| v \|^2_{H^1} \le \Lambda \left( \| D(v) \|^2_{L^2} + \int_{\p \OM} v \cdot B \cdot v ds \right).
\ea\ee
\end{lemma}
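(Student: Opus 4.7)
\emph{Proof plan.} The strategy is Korn's inequality combined with a compactness–contradiction argument that uses the boundary integral to absorb the $L^2$ norm of $v$.

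First, I would invoke the classical Korn second inequality on the bounded Lipschitz domain $\OM$ to get
\[
\|v\|_{H^1}^2 \le C_0\bigl(\|D(v)\|_{L^2}^2 + \|v\|_{L^2}^2\bigr)
\]
for every $v \in H^1(\OM)$. It therefore suffices to prove the reduced estimate
\[
\|v\|_{L^2}^2 \le C\Bigl(\|D(v)\|_{L^2}^2 + \int_{\p\OM} v \cdot B \cdot v\, ds\Bigr)
\]
for $v \in H^1$ with $v \cdot n = 0$ on $\p\OM$; combining the two then yields (\ref{dltdc01}) with $\Lambda = C_0(1+C)$.

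For the reduced estimate I would argue by contradiction. If it failed, there would exist a sequence $\{v_k\} \subset H^1$ with $v_k \cdot n = 0$ on $\p\OM$, $\|v_k\|_{L^2} = 1$, and $\|D(v_k)\|_{L^2}^2 + \int_{\p\OM} v_k \cdot B \cdot v_k \, ds \to 0$. By the first step $\{v_k\}$ is bounded in $H^1$, so by Rellich--Kondrachov together with the compactness of the trace map $H^1(\OM) \to L^2(\p\OM)$, a subsequence converges weakly in $H^1$, strongly in $L^2(\OM)$, and strongly in $L^2(\p\OM)$ to some $v$. This limit would satisfy $\|v\|_{L^2(\OM)} = 1$, $D(v) = 0$ almost everywhere (hence $v(x) = a + b \times x$ is an infinitesimal rigid motion), $v \cdot n = 0$ on $\p\OM$, and $\int_{\p\OM} v \cdot B \cdot v \, ds = 0$ by continuity of the boundary quadratic form on $L^2(\p\OM)$.

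The crux is to rule out such a $v$. Because $B$ is positive semidefinite everywhere and positive definite on $\Sigma$ with $|\Sigma|>0$, the vanishing boundary integral forces $v \equiv 0$ on $\Sigma$. Using the axisymmetry of $\OM$, the tangency condition $(a + b \times x) \cdot n = 0$ on $\p\OM$ restricts $(a,b)$ to a finite-dimensional family of axial rigid motions; in the cylindrical case $\OM = A \times \T$ relevant here, expanding $v \cdot n = 0$ on the two coaxial cylinders reduces the admissible family to $v = \alpha \mathbf{e}_z + \omega\, r \mathbf{e}_\theta$. Demanding that this vanish on the positive-measure set $\Sigma$ then forces $\alpha = \omega = 0$, so $v \equiv 0$, contradicting $\|v\|_{L^2} = 1$. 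This final geometric step is the main obstacle: one must verify that on an axisymmetric domain the space of rigid motions tangent to $\p\OM$ is trivialized by vanishing on any subset of $\p\OM$ with positive surface measure, and this is exactly where both the axisymmetric structure of $\OM$ and the strict positivity of $B$ on $\Sigma$ are essential.
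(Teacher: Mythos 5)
Your proof is essentially the same as the paper's: both run a compactness--contradiction argument, using Korn's second inequality to get $H^1$-boundedness, Rellich and trace compactness to extract a strong $L^2(\OM)\cap L^2(\p\OM)$ limit $v$ with $D(v)=0$, characterizing $v$ as an infinitesimal rigid motion $a+b\times x$, and then killing it via the tangency condition $v\cdot n=0$ together with the positivity of $B$ on $\Sigma$. The only cosmetic difference is the normalization (you take $\|v_k\|_{L^2}=1$; the paper normalizes $\|v_m\|_{L^2}+\|D(v_m)\|_{L^2}=1$), and you are in fact a bit more careful than the paper in the final geometric step: the paper asserts that $v\cdot n=0$ forces $a=0$, which is false for the periodic cylinder $\OM=A\times\T$ (where $a=\alpha\mathbf{e}_z$ is tangent), whereas you correctly reduce to the two-parameter family $\alpha\mathbf{e}_z+\omega r\mathbf{e}_\theta$ and then use vanishing on $\Sigma$ to kill both parameters.
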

\begin{proof}
We prove (\ref{dltdc01}) by contradiction.
If (\ref{dltdc01}) fails, then there exists a sequence
$\{ v_m \}_{m \in \mathbb{N}} \subset H^1$ with $v_m \cdot n=0$ on $\p \OM$ such that
\be\la{ddc1}\ba
\| v_m \|^2_{H^1} > m \left( \| D(v_m) \|^2_{L^2} + \int_{\p \OM} v_m \cdot B \cdot v_m ds \right).
\ea\ee

Normalize the sequence by setting $||| v_m |||=1$, where $||| v_m ||| \triangleq \| v_m \|_{L^2}+\| D(v_m) \|_{L^2}$.
From Korn's inequality (see \cite{NJA}), we have
\be\la{ddc2}\ba
\| v_m \|_{H^1} \le C \left( \| v_m \|_{L^2} + \| D(v_m) \|_{L^2} \right) \le C,
\ea\ee
so $\{ v_m \}_{m \in \mathbb{N}}$ is bounded in $H^1$.
The Sobolev compact embedding theorem then yields a subsequence $\{ v_{m_i} \}_{i \in \mathbb{N}}$ and $v \in H^1$ with $v \cdot n=0$ on $\p \OM$ such that
\be\la{ddc3}\ba
v_{m_i} \rightharpoonup v  \ \text{ in } H^1(\OM) \cap H^{\frac{1}{2}}(\p \OM),
\quad v_{m_i} \to v  \ \text{ in } L^2(\OM) \cap L^2(\p \OM).
\ea\ee
Combining this with (\ref{ddc1}) and (\ref{ddc2}) yields $D(v)=0$ in $\OM$.
By \cite[Proposition 3.13]{AACG}, we conclude that there exist constant vectors $\mathbf{b}$ and $\mathbf{c}$ such that $v=\mathbf{b} \times \mathbf{x} + \mathbf{c}$.
The boundary condition $v \cdot n=0$ on $\p \OM$ implies $\mathbf{c}=0$.

Moreover, (\ref{ddc1}), (\ref{ddc2}) and (\ref{ddc3}) ensure that
\be\ba\nonumber
\int_{\Sigma} v \cdot B \cdot v ds =0.
\ea\ee
Since $B>0$ on $\Sigma$, it follows that $v=\mathbf{b} \times \mathbf{x}=0$ on $\Sigma$,
which implies $\mathbf{b}=0$, hence $v=0$ in $\OM$.

However, (\ref{ddc1}) and (\ref{ddc3}) imply that $||| v |||=1$, leading to a contradiction.
Thus, (\ref{dltdc01}) holds, and the proof is finished.
\end{proof}

The following lemma can be found in \cite[Lemma 6.2]{CL}.
\begin{lemma}\la{dltdc2}
Let $\OM$ be a smooth bounded domain in $\rrr$. Then for $v \in H^2(\OM)$ with $v \cdot n=0$ on $\p \OM$, it holds that
\be\la{dltdc02}\ba
2\int D(v) \cdot D(v) dx = 2\int (\div v)^2 dx + \int |\curl v|^2 dx -2\int_{\p \OM} v \cdot D(n) \cdot v ds.
\ea\ee
\end{lemma}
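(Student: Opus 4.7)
The plan is to reduce the identity to standard pointwise algebraic relations among $|D(v)|^2$, $|\na v|^2$, $(\div v)^2$ and $|\curl v|^2$, and then push the leftover volume term onto the boundary via integration by parts, where the constraint $v\cdot n=0$ converts normal derivatives into quantities built from $D(n)$. By density I may assume $v$ is smooth with $v\cdot n=0$ on $\p\OM$, so that every boundary integral below is classical.

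From $2D(v)_{ij}=\p_i v_j+\p_j v_i$ one checks pointwise that
\[
2|D(v)|^2=|\na v|^2+\p_i v_j\,\p_j v_i,\qquad |\curl v|^2=|\na v|^2-\p_i v_j\,\p_j v_i,
\]
and therefore $2|D(v)|^2=|\curl v|^2+2\,\p_i v_j\,\p_j v_i$. Integrating over $\OM$, the lemma reduces to
\[
\int \p_i v_j\,\p_j v_i\,dx=\int (\div v)^2\,dx-\int_{\p\OM}v\cdot D(n)\cdot v\,ds.
\]
I would establish this by the two-fold integration-by-parts identity $\p_i v_j\,\p_j v_i=\p_i(v_j\p_j v_i)-\p_j(v_j\,\div v)+(\div v)^2$, which yields
\[
\int \p_i v_j\,\p_j v_i\,dx=\int_{\p\OM}\left(v_j n_i\p_j v_i-(v\cdot n)\div v\right)ds+\int(\div v)^2\,dx,
\]
and the factor $v\cdot n$ in the second boundary integral vanishes by hypothesis.

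It remains to evaluate $\int_{\p\OM}v_j n_i\p_j v_i\,ds$. Since $v\cdot n=0$ on $\p\OM$ and $v$ is therefore tangent to the boundary, differentiating $v\cdot n$ along $v$ (a tangential direction) yields $v_j\p_j(v_i n_i)=0$ on $\p\OM$, whence $v_j n_i\p_j v_i=-v_i v_j\p_j n_i$ on $\p\OM$. Because $v_i v_j$ is symmetric in $(i,j)$, symmetrizing $\p_j n_i$ into $D(n)_{ji}$ identifies the right-hand side as $-v\cdot D(n)\cdot v$; this is precisely the boundary observation recorded in (\ref{bjds}). Substituting closes the reduction.

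The main subtlety I expect is legitimizing the step that replaces the ambient derivative $v_j\p_j$ with a tangential operator on $\p\OM$: one must fix a smooth extension of $n$ off the boundary, but after symmetrization the quantity $v\cdot D(n)\cdot v$ depends only on tangential data, so the final identity is independent of the extension chosen. Once this is addressed the rest is routine; the $H^2$ hypothesis is used only to give the boundary integrals classical meaning via trace theory, after which the identity extends by density to all admissible $v$.
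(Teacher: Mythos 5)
Your proof is correct, and the argument is essentially the standard one. A small clarification on the comparison: the paper itself does not prove Lemma \ref{dltdc2} but cites it directly as \cite[Lemma 6.2]{CL}, so there is no in-paper derivation to match against. Your derivation — reduce to the cross-term $\int\p_i v_j\,\p_j v_i$ via the pointwise identities $2|D(v)|^2=|\na v|^2+\p_i v_j\p_j v_i$ and $|\curl v|^2=|\na v|^2-\p_i v_j\p_j v_i$, integrate by parts twice, use $v\cdot n=0$ to kill the $\div$ boundary term, and then replace $v_jn_i\p_jv_i$ by $-v\cdot D(n)\cdot v$ via the tangential-derivative identity — is exactly the intended route; the last step is the second observation in (\ref{bjds}), which the paper invokes repeatedly. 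Your remark about extension-independence of $v\cdot D(n)\cdot v$ is the right thing to say and is correctly resolved by noting that $v$ is tangent on $\p\OM$, so $v_j\p_j$ kills any two extensions of $n$ differing by a function vanishing on $\p\OM$. The density argument at the end is also sound: all the quadratic functionals appearing are continuous in $H^2$ through the trace maps $H^2\to H^{3/2}(\p\OM)$ and $H^2\to H^1$, so the identity passes to the limit.
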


Combining Lemma \ref{dltdc1} and Lemma \ref{dltdc2}, we obtain the following weighted div-curl type estimate.
\begin{lemma}\la{wdc}
Let $\OM$ be as in $(\ref{om})$, and let $K$ satisfy the assumptions in Theorem \ref{th0}.
Then for any $v \in H^2(\OM)$ with $(v \cdot n)|_{\p \OM} = 0$,
there exist positive constants $C$ and $\hat{\nu}$,
depending only on $\OM$, such that for any $\nu \in (0,\hat{\nu})$,
\be\ba\la{wdc1}
\int_\OM |v|^{\nu} |\na v|^2 dx
\le C \int_\OM |v|^{\nu} \left( (\div v)^2 + |\curl v|^2 \right) dx
+ C \int_{\p \OM} v \cdot K \cdot v |v|^\nu ds.
\ea\ee
\end{lemma}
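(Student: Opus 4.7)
The plan is to reduce the weighted estimate to the \emph{unweighted} div--curl estimate (\ref{dddc}) by substituting the auxiliary vector field $w := |v|^{\nu/2}v$. Since $v\in H^{2}(\OM)$ embeds into $L^{\infty}(\OM)$ in three dimensions, $w$ is well-defined and the chain-rule calculations below are rigorously justified by regularising $|v|^{\nu/2}$ as $(|v|^{2}+\varepsilon^{2})^{\nu/4}$ and letting $\varepsilon\to 0^{+}$. Crucially, the boundary condition transfers automatically: $w\cdot n=|v|^{\nu/2}(v\cdot n)=0$ on $\p\OM$, and a direct estimate shows $w\in H^{1}(\OM)$. Consequently the unweighted estimate (\ref{dddc})---which itself is a consequence of Lemmas \ref{dltdc1} and \ref{dltdc2} applied with $B=K+2D(n)$, so that the $D(n)$ boundary terms produced by Lemma \ref{dltdc2} cancel those inside Lemma \ref{dltdc1}---is available for $w$.

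The next step is a pointwise computation. From $\p_{i}w^{j}=\tfrac{\nu}{2}|v|^{\nu/2-2}(v^{k}\p_{i}v^{k})v^{j}+|v|^{\nu/2}\p_{i}v^{j}$ together with the identity $\sum_{i}(v^{k}\p_{i}v^{k})^{2}=|v|^{2}|\na|v||^{2}$, I would derive
\begin{equation*}
|\na w|^{2}=|v|^{\nu}|\na v|^{2}+\nu\bigl(1+\tfrac{\nu}{4}\bigr)|v|^{\nu}|\na|v||^{2}\ge |v|^{\nu}|\na v|^{2}.
\end{equation*}
Analogous manipulations, combined with the elementary pointwise bound $|\na|v||\le|\na v|$ (Cauchy--Schwarz on $|v|\na|v|=v^{k}\na v^{k}$), yield
\begin{equation*}
(\div w)^{2}\le 2|v|^{\nu}(\div v)^{2}+\tfrac{\nu^{2}}{2}|v|^{\nu}|\na v|^{2},\quad |\curl w|^{2}\le 2|v|^{\nu}|\curl v|^{2}+\tfrac{\nu^{2}}{2}|v|^{\nu}|\na v|^{2},
\end{equation*}
while $w\cdot K\cdot w=|v|^{\nu}\,v\cdot K\cdot v$ by the very definition of $w$.

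Feeding these four pointwise bounds into (\ref{dddc}) applied to $w$ gives
\begin{equation*}
\int_{\OM}|v|^{\nu}|\na v|^{2}dx\le 2C\int_{\OM}|v|^{\nu}\bigl((\div v)^{2}+|\curl v|^{2}\bigr)dx+C\nu^{2}\int_{\OM}|v|^{\nu}|\na v|^{2}dx+C\int_{\p\OM}|v|^{\nu}\,v\cdot K\cdot v\,ds.
\end{equation*}
Choosing $\hat{\nu}>0$ so that $C\hat{\nu}^{2}<1/2$ permits the middle term on the right to be absorbed into the left-hand side for every $\nu\in(0,\hat{\nu})$, producing (\ref{wdc1}). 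The only genuine subtlety---rather than a serious obstacle---is justifying the chain rule at the zero set of $v$, handled by the $\varepsilon$-regularisation indicated above together with dominated convergence based on the $\varepsilon$-uniform bound $|\na w_{\varepsilon}|^{2}\le(1+\nu+\tfrac{\nu^{2}}{4})(|v|^{2}+\varepsilon^{2})^{\nu/2}|\na v|^{2}\in L^{1}(\OM)$. The structural reason the substitution succeeds is that multiplying $v$ by the scalar $|v|^{\nu/2}$ preserves both the boundary condition $v\cdot n=0$ and the quadratic form $v\cdot K\cdot v$ with exactly the correct weight, while every error term generated by differentiating $|v|^{\nu/2}$ carries an explicit factor $\nu$ and is therefore absorbable once $\nu$ is small.
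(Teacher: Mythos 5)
Your proof is correct and follows essentially the same route as the paper: substitute $w=|v|^{\nu/2}v$, observe that $w\cdot n=0$ transfers on the boundary, apply the unweighted estimate built from Lemmas \ref{dltdc1} and \ref{dltdc2} with $B=K+2D(n)$, bound the pointwise error terms (all carrying a factor of $\nu$), and absorb for $\nu$ small. Your exact identity $|\na w|^{2}=|v|^{\nu}|\na v|^{2}+\nu(1+\nu/4)|v|^{\nu}|\na|v||^{2}\ge|v|^{\nu}|\na v|^{2}$ is a slight sharpening of the paper's Cauchy-inequality lower bound, and your $\varepsilon$-regularisation remark is a welcome extra justification, but neither changes the substance of the argument.
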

\begin{proof}
First, using Cauchy's inequality, we directly calculate that
\be\la{wdc01}\ba
\left( \div ( |v|^{\frac{\nu}{2}} v ) \right)^2 \le 2 |v|^{\nu} (\div v)^2
+ \nu^2 |v|^{\nu} |\na v|^2, \\
\left| \curl ( |v|^{\frac{\nu}{2}} v ) \right|^2 \le 2 |v|^{\nu} |\curl v|^2
+ \nu^2 |v|^{\nu} |\na v|^2, \\
\left| \na ( |v|^{\frac{\nu}{2}} v ) \right|^2 \ge \frac{1}{2} |v|^{\nu} |\na v|^2
- \nu^2 |v|^{\nu} |\na v|^2.
\ea\ee

Observing that $|v|^{\frac{\nu}{2}} v \cdot n=0$ on $\p \OM$,
we select $B=K+2D(n)$ in Lemma \ref{dltdc1} and apply Lemma \ref{dltdc2} to derive
\be\la{wdc02}\ba
\int_\OM \left| \na ( |v|^{\frac{\nu}{2}} v ) \right|^2 dx
& \le C \int_\OM \left( \left( \div ( |v|^{\frac{\nu}{2}} v ) \right)^2
+ \left| \curl ( |v|^{\frac{\nu}{2}} v ) \right|^2 \right) dx \\
& \quad + C \int_{\p \OM} v \cdot K \cdot v |v|^\nu ds.
\ea\ee
Combining (\ref{wdc02}) with (\ref{wdc01}) and choosing $\hat{\nu}>0$ sufficiently small yields (\ref{wdc1}) for any $\nu \in (0,\hat{\nu})$,
which completes the proof.
\end{proof}

To estimate $\| \na \mathbf{u}\|_{L^{\infty}}$ and $\| \na \n\|_{L^{q}}$,
we require the following Beale-Kato-Majda type inequality, 
which was established in \cite{K} when $\div \mathbf{u} \equiv 0$. 
We refer readers to \cite{BKM,CL} for further details.
\begin{lemma}\la{bkm}
Let $\OM$ be a bounded domain in $\rrr$ with smooth boundary.
For $3<q<\infty$, there exists a positive constant $C$ depending only on $q$ and $\OM$ such that the following estimate holds:
\be\ba\nonumber
\|\na \mathbf{u}\|_{L^\infty} \le C \left( \|\div \mathbf{u} \|_{L^\infty}
+ \|\curl \mathbf{u}\|_{L^\infty} \right) \log \left(e+ \|\na^2 \mathbf{u}\|_{L^q} \right)+ C\|\na \mathbf{u}\|_{L^2}+C,
\ea\ee
for any function $\mathbf{u} \in \left\{ W^{2, q}(\Omega ) \big| \mathbf{u} \cdot n=0, \mathrm {curl}\,\mathbf{u} \times n=-K\mathbf{u} \ \textnormal{ on } \partial \Omega \right\}$.
\end{lemma}

Next, we introduce the following "inversion" operator of divergence; the proof can be found in \cite{CL}.
\begin{lemma}\la{iod}
Let $1<p<\infty$. There exists a bounded linear operator $\mathcal{B}$,
\be\ba\nonumber
\mathcal{B}:\left\{f \in L^p(\OM) : \  \int_\Omega fdx=0\right\} & \rightarrow ( W^{1,p}_0(\OM) )^3,
\ea\ee
such that $v=\mathcal{B}(f)$ satisfies
\be\la{iod1}\ba
\begin{cases}
\mathrm{div}v=f&\ \textnormal{ in } \Omega,\\
v=0&\ \textnormal{ on }\partial\Omega.
\end{cases}
\ea\ee
Moreover, the operator possesses the following properties:

(1) For $1<p<\infty$, there is a constant $C(p)$ depending only on $\Omega$ and $p$, such that
\bnn
\|\mathcal{B}(f)\|_{W^{1,p}}\leq C(p) \|f\|_{L^p}.
\enn

(2) If $f=\mathrm{div}h$, for some $h\in L^p$ with $h\cdot n=0$ on $\partial\Omega$, then $v=\mathcal{B}(f)$ is a weak solution of (\ref{iod1}) and satisfies
\bnn
\|\mathcal{B}(f)\|_{L^p}\leq C(p) \| h \|_{L^p}.
\enn
\end{lemma}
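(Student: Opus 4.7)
The plan is to follow the classical Bogovskii construction of a right inverse of the divergence. I would first reduce to the case where $\OM$ is star-shaped with respect to some ball $B\subset\OM$; for a general bounded Lipschitz domain, a finite cover $\{U_i\}$ by star-shaped subdomains together with a smooth partition of unity $\{\chi_i\}$ reduces the problem to this model case, after redistributing the masses $\int_{U_i}\chi_i f\,dx$ across overlaps so that each local right-hand side has zero mean.

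On a star-shaped subdomain, fix a cutoff $\omega\in C^{\infty}_c(B)$ with $\int\omega\,dx=1$, and define explicitly
\be\ba\nonumber
\mathcal{B}(f)(x)=\int_{\OM}N(x,y)f(y)\,dy,
\ea\ee
where $N(x,y)$ is Bogovskii's kernel, built by integrating $\omega$ along the ray from $y$ through $x$. A direct computation using $\int f\,dx=0$ shows $\div\mathcal{B}(f)=f$ in $\OM$ and $\mathcal{B}(f)=0$ on $\p\OM$, yielding a solution of (\ref{iod1}).

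For estimate (1), the derivative kernel $\na_x N$ decomposes into a Calder\'on--Zygmund singular integral plus a weakly singular remainder. Standard $L^p$-boundedness of Calder\'on--Zygmund operators, valid for $1<p<\infty$, gives $\|\na\mathcal{B}(f)\|_{L^p}\le C(p)\|f\|_{L^p}$. Since $\mathcal{B}(f)=0$ on $\p\OM$, Poincar\'e's inequality supplies $\|\mathcal{B}(f)\|_{L^p}\le C\|\na\mathcal{B}(f)\|_{L^p}$, and combining the two yields (1). For property (2), when $f=\div h$ with $h\cdot n=0$ on $\p\OM$, I would integrate by parts in $y$ inside the integral defining $\mathcal{B}(f)$, using the tangency condition on $h$ to kill the boundary term, producing a representation whose kernel $\na_y N$ is again Calder\'on--Zygmund type; $L^q$-boundedness then yields $\|\mathcal{B}(f)\|_{L^q}\le C(q)\|h\|_{L^q}$.

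The principal obstacle is the partition-of-unity step: the local correctors used to restore the zero-mean condition on each piece must themselves be controlled in $L^p$ and in $L^q$, with constants depending only on $\OM$ and the exponent, and one must verify that the local inverses assemble into a single bounded operator on $\OM$ without destroying property (2). A secondary difficulty is justifying the integration by parts in (2) for $h$ merely in $L^q$, which requires a density argument approximating $h$ by smooth vector fields with $h\cdot n=0$ on $\p\OM$ and passing to the limit inside the singular-integral representation.
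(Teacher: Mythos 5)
The paper does not prove this lemma; it simply cites \cite{CL} (and, ultimately, the standard literature on the Bogovskii operator, e.g.\ Galdi or Feireisl) for the statement. Your sketch is precisely that classical route: construct the Bogovskii kernel on star-shaped patches, assemble via partition of unity, and read the two estimates off Calder\'on--Zygmund theory. So the comparison is essentially vacuous --- you are reconstructing the argument that the cited reference encapsulates --- and the outline is correct.

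A few small points worth tightening. When you assert ``$\mathcal{B}(f)=0$ on $\p\OM$,'' for $f$ merely in $L^p$ the clean statement is $\mathcal{B}(f)\in W^{1,p}_0(\OM)$; the pointwise vanishing is literal only for $f\in C_c^\infty(\OM)$, and membership in $W^{1,p}_0$ for general $f$ follows by density once estimate (1) is in hand. For property (2), the operator with kernel $\na_y N(x,y)$ that emerges after integration by parts is not literally the adjoint of the one in (1), but it is of the same Calder\'on--Zygmund type, and the $L^q$ bound follows by the same machinery; you correctly flag that the integration by parts for $h\in L^q$ with $h\cdot n=0$ on $\p\OM$ requires a density/approximation argument, which is the genuine technical point there. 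Finally, in the partition-of-unity step, one must be careful that the mass-redistribution correctors are chosen so that the resulting operator is a single fixed linear operator $\mathcal{B}$ (not depending on $f$ beyond linearity) that simultaneously enjoys both (1) and (2); this is standard but is exactly where a careless assembly can lose property (2). None of these is a gap in the approach --- they are the expected bookkeeping in the Bogovskii construction.
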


Finally, we state the following Zlotnik inequality, which plays an important role in deriving the uniform upper bound of the density; see \cite{ZAA}.
\begin{lemma}\la{zli}
Suppose that the function $y(t) \in W^{1,1}(0,T)$ satisfies
\bnn
y'(t)= g(y)+h'(t) \mbox{ on  } [0,T] ,\quad y(0)=y_0, 
\enn
with $ g\in C(\r)$ and $h\in W^{1,1}(0,T).$ If $g(\infty)=-\infty$
and 
\bnn 
h(t_2)-h(t_1) \le N_0 +N_1(t_2-t_1),
\enn
for all $0 \le t_1<t_2\le T$
with some $N_0\ge 0$ and $N_1\ge 0$, then
\bnn
y(t)\le \max\left\{y_0,\overline{\zeta} \right\}+N_0<\infty
\textnormal{ on } [0,T],
\enn
where $\overline{\zeta}$ is a constant such
that 
\bnn
g(\zeta)\le -N_1 \ \textnormal{ for } \ \zeta\ge \overline{\zeta}.
\enn
\end{lemma}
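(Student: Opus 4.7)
The plan is to treat this as a classical ODE comparison: at every time $t$, either $y$ has remained above the ``dragging'' threshold $\overline{\zeta}$ for all earlier times, in which case the decay produced by $g(y)\le -N_1$ cancels the growth coming from $h$, or $y$ has recently been at or below $\overline{\zeta}$ and one only needs to track the trajectory forward from that moment. I read the hypothesis as implicitly identifying $b$ with $h$ (the $b/h$ mismatch in the statement appears to be a typo), so the increment assumption reads $h(t_2)-h(t_1)\le N_0+N_1(t_2-t_1)$.

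\textbf{Step 1: choice of threshold.} Since $g\in C(\mathbb{R})$ with $g(\infty)=-\infty$, pick $\overline{\zeta}$ as in the statement so that $g(\zeta)\le -N_1$ for all $\zeta\ge \overline{\zeta}$, and set $M:=\max\{y_0,\overline{\zeta}\}$. The goal is then to show $y(t)\le M+N_0$ throughout $[0,T]$.

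\textbf{Step 2: dichotomy at a fixed $t$.} Since $y\in W^{1,1}(0,T)$, it is absolutely continuous and hence continuous. Fix $t\in[0,T]$. If $y(s)>\overline{\zeta}$ for every $s\in[0,t]$, then in particular $y_0>\overline{\zeta}$, so $M=y_0$ and $g(y(s))\le -N_1$ throughout $[0,t]$; integrating the ODE from $0$ to $t$ and using $h(t)-h(0)\le N_0+N_1 t$, the two $N_1 t$ contributions cancel and one obtains $y(t)\le y_0+N_0=M+N_0$. Otherwise the set $\{s\in[0,t]:y(s)\le \overline{\zeta}\}$ is nonempty; let $t_1$ denote its supremum. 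Continuity of $y$ gives $y(t_1)\le \overline{\zeta}$ and $y(s)>\overline{\zeta}$ on $(t_1,t]$, so integrating from $t_1$ to $t$ and invoking the increment hypothesis on the pair $(t_1,t)$ yields $y(t)\le \overline{\zeta}+N_0\le M+N_0$ after the same cancellation. Combining both cases gives the conclusion.

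\textbf{Main obstacle.} There is no serious analytic difficulty: Zlotnik's lemma is really a one-sided maximum principle for scalar ODEs driven by a source whose increments grow at most linearly in time. The only care needed is notational (matching $h$ with $b$) and logical (verifying that $t_1$ is well-defined and that the cancellation has the correct sign), both of which follow immediately from continuity of $y$ and the standing assumption $g(\infty)=-\infty$. No result from elsewhere in the paper is invoked; the lemma is self-contained.
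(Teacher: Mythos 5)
The paper presents this as Lemma 2.10 (the Zlotnik inequality) and cites it directly to the original reference [ZAA] without reproducing a proof, so there is no in-paper argument to compare against. Your self-contained proof is correct and is the standard one: the dichotomy at the last time $t_1 \in [0,t]$ with $y(t_1) \le \overline{\zeta}$, combined with the exact cancellation of the $\pm N_1(t-t_1)$ contributions from $g(y)\le -N_1$ on $(t_1,t]$ and the increment bound on $h$, gives $y(t)\le y(t_1)+N_0$, and the degenerate case where $y$ never meets the threshold reduces to $y(t)\le y_0+N_0$. Your identification of $b$ with $h$ and your appeal to $y,h\in W^{1,1}(0,T)\subset AC[0,T]$ to justify continuity and the integral form of the ODE are both appropriate.
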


\section{A Priori Estimates (\uppercase\expandafter{\romannumeral1}): Upper Bound of the density}
In this section, we assume that $(\n,\mathbf{u})$ is the axisymmetric strong solution of (\ref{ns})--(\ref{i3}) on $\OM \times (0,T]$,
satisfying (\ref{sdc}), (\ref{lct1}) and (\ref{lct2}), whose existence is guaranteed by Lemmas \ref{lct} and \ref{dcx}.

We set
\be\ba\nonumber
A_1^2(t) \triangleq \int (2\mu + \lam(\n)) (\div \mathbf{u})^2 + |\na \mathbf{u}|^2 + (\n+1)^{\ga-1} (\n-\ol{\n})^2 dx,
\ea\ee
\be\ba\nonumber
A_2^2(t) \triangleq \int \rho(t)|\dot{\mathbf{u}}(t)|^2dx,
\ea\ee
and
\be\ba\nonumber
R_T \triangleq 1 + \sup_{0 \le t \le T} \| \n(t) \|_{L^\infty}.
\ea\ee

We begin with the standard energy estimate.
\begin{lemma}\la{l1}
There exists a positive constant
$C$ depending only  on $\mu$, $\gamma$, $\| \n_0 \|_{L^\infty}$, $\| \mathbf{u}_0 \|_{H^1}$, and $K$ such that
\be\ba\la{kv01}
\sup_{0\leq t\leq T}\left( \int \frac{1}{2}\rho |\mathbf{u}|^2 + \frac{P}{\ga-1} dx \right) 
+ \int_0^T \int (2\mu + \lam(\n))  (\div \mathbf{u})^2 + |\na \mathbf{u}|^2 dxdt
\le C.
\ea\ee
\end{lemma}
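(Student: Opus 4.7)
The plan is to carry out the classical multiplier argument: test the momentum equation $(\ref{ns})_2$ against $\mathbf{u}$ and integrate over $\Omega$, using the continuity equation to identify a time derivative on the left and processing the viscous term by integration by parts in a way that exploits the slip boundary conditions.

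First, using $(\ref{ns})_1$, a direct computation gives
\be\ba\nonumber
\int \bigl[(\rho\mathbf{u})_t + \div(\rho\mathbf{u}\otimes\mathbf{u})\bigr]\cdot\mathbf{u}\,dx
= \frac{d}{dt}\int \tfrac{1}{2}\rho|\mathbf{u}|^2\,dx,
\ea\ee
while multiplying $(\ref{ns})_1$ by $a\gamma\rho^{\gamma-1}$ and integrating, together with $\mathbf{u}\cdot n=0$ on $\p\Omega$, gives
$\int \nabla P\cdot\mathbf{u}\,dx = \frac{d}{dt}\int \frac{P}{\gamma-1}\,dx$. For the viscous contribution I would use the identity $-\Delta\mathbf{u}=\curl\curl\mathbf{u}-\nabla\div\mathbf{u}$ and integrate each piece by parts. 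The $\nabla\div$ piece loses its boundary term thanks to $\mathbf{u}\cdot n=0$; the $\curl\curl$ piece produces $\int|\curl\mathbf{u}|^2\,dx + \int_{\p\Omega}(n\times\curl\mathbf{u})\cdot\mathbf{u}\,ds$, and the boundary condition $\curl\mathbf{u}\times n = -K\mathbf{u}$ converts the surface term into $+\int_{\p\Omega}\mathbf{u}\cdot K\mathbf{u}\,ds$. The term $-\int \nabla((\mu+\lambda)\div\mathbf{u})\cdot\mathbf{u}\,dx$ similarly reduces to $\int(\mu+\lambda(\rho))(\div\mathbf{u})^2\,dx$ because $\mathbf{u}\cdot n=0$.

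Collecting everything gives the energy identity
\be\ba\nonumber
\frac{d}{dt}\int\!\Bigl(\tfrac{1}{2}\rho|\mathbf{u}|^2+\tfrac{P}{\gamma-1}\Bigr)dx
+ \mu\!\int\!|\curl\mathbf{u}|^2\,dx
+ \!\int\!(2\mu+\lambda(\rho))(\div\mathbf{u})^2\,dx
+ \mu\!\int_{\p\Omega}\!\mathbf{u}\cdot K\mathbf{u}\,ds = 0.
\ea\ee
Because $K$ is positive semi-definite, the boundary term is nonnegative and can be dropped. Integrating in $t$ from $0$ to $T$ and using the hypotheses $\rho_0\in L^\infty$, $\mathbf{u}_0\in\tilde H^1$ to bound the initial energy yields the first two terms of (\ref{kv01}) as well as control of the three dissipation pieces $\mu\int_0^T\int|\curl\mathbf{u}|^2\,dx\,dt$, $\int_0^T\int(2\mu+\lambda(\rho))(\div\mathbf{u})^2\,dx\,dt$ and $\mu\int_0^T\int_{\p\Omega}\mathbf{u}\cdot K\mathbf{u}\,ds\,dt$.

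Finally, to recover the full $\int_0^T\|\nabla\mathbf{u}\|_{L^2}^2\,dt$ bound appearing in (\ref{kv01}), I would invoke the weighted div-curl estimate (\ref{dddc}) (following from Lemmas \ref{dltdc1} and \ref{dltdc2}), which, applied to $\mathbf{u}\in\tilde H^1$, gives
\be\ba\nonumber
\|\nabla\mathbf{u}\|_{L^2}^2
\le C\Bigl(\|\div\mathbf{u}\|_{L^2}^2 + \|\curl\mathbf{u}\|_{L^2}^2 + \int_{\p\Omega}\mathbf{u}\cdot K\mathbf{u}\,ds\Bigr).
\ea\ee
Integrating in time and using the three dissipation bounds just obtained closes the estimate. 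The only subtle point — and the one I would be most careful about — is getting the sign right in the $\curl\curl$ integration by parts so that the slip boundary condition produces a dissipative rather than a source term; everything else is routine.
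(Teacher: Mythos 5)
Your proposal is correct and follows essentially the same route as the paper: test the momentum equation against $\mathbf{u}$, use $\Delta\mathbf{u}=\nabla\div\mathbf{u}-\nabla\times\curl\mathbf{u}$ and the slip condition to produce the nonnegative boundary term $\mu\int_{\p\OM}\mathbf{u}\cdot K\cdot\mathbf{u}\,ds$, and then invoke Lemmas \ref{dltdc1}--\ref{dltdc2} (i.e., (\ref{dddc})) to recover $\|\nabla\mathbf{u}\|_{L^2}^2$ from the div, curl, and boundary dissipation. The only cosmetic difference is that the paper folds Lemmas \ref{dltdc1}--\ref{dltdc2} into the differential inequality (arriving at (\ref{kv14}) before integrating in time), whereas you integrate the energy identity first and apply the div-curl estimate afterwards; the two orderings are equivalent.
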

\begin{proof}
Multiplying $(\ref{ns})_2$ by $\mathbf{u}$ and integrating by parts over $\OM$,
we derive from $(\ref{ns})_1$ and the boundary condition (\ref{i3}) that
\be\la{kv11}\ba
& \frac{d}{dt} \left( \int \frac{1}{2}\rho |\mathbf{u}|^2 + \frac{P}{\ga-1} dx \right)
+ \int (2\mu + \lam(\n)) (\div \mathbf{u})^2 dx + \mu \int |\curl \mathbf{u}|^2 dx \\
& + \mu \int_{\p \OM} \mathbf{u} \cdot K \cdot \mathbf{u} ds = 0,
\ea\ee
where we have used the following fact:
\be\ba\nonumber
\Delta \mathbf{u} = \na \div \mathbf{u} - \na \times \curl \mathbf{u}.
\ea\ee
Combining (\ref{kv11}) with Lemma \ref{dltdc2} yields
\be\ba\nonumber
& \frac{d}{dt} \left( \int \frac{1}{2}\rho |\mathbf{u}|^2 + \frac{P}{\ga-1} dx \right)
+ \int \lam(\n) (\div \mathbf{u})^2 dx + 2\mu \int | D(\mathbf{u}) |^2 dx \\
& + \mu \int_{\p \OM} \mathbf{u} \cdot ( K+2D(n) ) \cdot \mathbf{u} ds = 0,
\ea\ee
which together with Lemma \ref{dltdc1} implies
\be\la{kv14}\ba
& \frac{d}{dt} \left( \int \frac{1}{2}\rho |\mathbf{u}|^2 + \frac{P}{\ga-1} dx \right)
+ \int \lam(\n) (\div \mathbf{u})^2 dx + \frac{\mu}{\Lambda} \int |\na \mathbf{u}|^2 dx
\le 0.
\ea\ee
Integrating (\ref{kv14}) over $(0,T)$, we obtain (\ref{kv01}) and complete the proof of Lemma \ref{l1}.
\end{proof}

\begin{lemma}\la{l2}
Assume that $(\n,\mathbf{u})$ is the strong solution of $(\ref{ns})$ satisfying
the boundary conditions $(\ref{i3})$.
We define
\be\la{yxt01}\ba
F=(2\mu+\lam)\div \mathbf{u} - P,
\ea\ee
which admits the following decomposition:
\be\la{yxt02}\ba
F-\ol{F}=\frac{\p}{\p t}\tilde{F}_1+F_2+F_3.
\ea\ee
Furthermore, for any $1<p<\infty$, there exists a positive constant $C$ depending only on $p$ and $K$ such that
\be\la{yxt03}\ba
\| \tilde{F}_1 \|_{W^{1,p}} \le C \| \n \mathbf{u} \|_{L^p}, \ 
\| F_2 \|_{L^{p}} \le C \| \n \mathbf{u}\otimes \mathbf{u} \|_{L^p}, \ 
\| F_3 \|_{W^{1,p}} \le C \| \na \mathbf{u} \|_{L^p}.
\ea\ee
\end{lemma}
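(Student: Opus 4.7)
The plan is to obtain $F-\ol F$ by inverting the momentum equation for $\na F$. First, combining $(\ref{ns})_2$ with the identity $\Delta \mathbf{u}=\na\div \mathbf{u}-\na\times\curl \mathbf{u}$ and the expansion $(\n \mathbf{u})_t+\div(\n \mathbf{u}\otimes \mathbf{u})=\n\dot{\mathbf{u}}$ immediately yields
\[
\na F \;=\; (\n \mathbf{u})_t + \div(\n \mathbf{u}\otimes \mathbf{u}) + \mu\,\na\times\curl \mathbf{u}.
\]
Since $F-\ol F$ has mean zero on $\OM$, the task of recovering $F-\ol F$ from its gradient reduces to inverting $\na$ on the mean-zero subspace, which is precisely the role of the Bogovskii operator $\mathcal{B}$ provided by Lemma \ref{iod}.

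Concretely, for each $\phi\in L^{p'}(\OM)$ with $\int \phi\,dx=0$, set $v_\phi:=\mathcal{B}(\phi)$, so that $\div v_\phi=\phi$, $v_\phi|_{\pa\OM}=0$, and $\|v_\phi\|_{W^{1,p'}}\le C\|\phi\|_{L^{p'}}$. Pairing the displayed identity with $v_\phi$ and integrating by parts over $\OM$ (every boundary integral is killed by $v_\phi|_{\pa\OM}=0$, so the slip condition on $\mathbf{u}$ is not even invoked here) produces
\[
\int(F-\ol F)\phi\,dx = -\frac{d}{dt}\!\int\n \mathbf{u}\cdot v_\phi\,dx + \int(\n \mathbf{u}\otimes \mathbf{u})\!:\!\na v_\phi\,dx - \mu\!\int\curl \mathbf{u}\cdot\curl v_\phi\,dx.
\]
Each of the three right-hand functionals is bounded in $\phi$, so Riesz representation (applied pointwise in $t$) supplies scalar functions $\tilde F_1(x,t)$, $F_2(x,t)$, $F_3(x,t)$ of zero spatial mean satisfying $F-\ol F=\p_t\tilde F_1+F_2+F_3$. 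The first functional is $t$-dependent only through $\n \mathbf{u}$, which legitimises pulling $d/dt$ outside the integral and interpreting the result as $\p_t\tilde F_1$.

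The three norm estimates then come from tracking each piece through Lemma \ref{iod}. The bound $\|F_2\|_{L^p}\le C\|\n \mathbf{u}\otimes \mathbf{u}\|_{L^p}$ follows at once from Hölder's inequality and the gradient estimate $\|\na v_\phi\|_{L^{p'}}\le C\|\phi\|_{L^{p'}}$. For $\|\tilde F_1\|_{W^{1,p}}\le C\|\n \mathbf{u}\|_{L^p}$ and the analogous bound on $F_3$, the crucial ingredient is property (2) of Lemma \ref{iod}: whenever $\phi$ admits the representation $\phi=\div h$ with $h\cdot n|_{\pa\OM}=0$, the sharper estimate $\|\mathcal{B}(\phi)\|_{L^{p'}}\le C\|h\|_{L^{p'}}$ is in force. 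This additional divergence structure, combined with the $W^{-1,p'}$--$W^{1,p}$ duality, upgrades the naive $L^p$ control of $\tilde F_1$ and $F_3$ to the claimed $W^{1,p}$ bounds in terms of $\|\n\mathbf{u}\|_{L^p}$ and $\|\na \mathbf{u}\|_{L^p}$ respectively.

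The main technical point I anticipate is the rigorous identification of $\tilde F_1$ as a function of $(x,t)$ with enough temporal regularity to make sense of $\p_t\tilde F_1$ in the decomposition, and the verification that $d/dt$ commutes with the spatial pairing $\int \n\mathbf{u}\cdot v_\phi\,dx$ uniformly in $\phi$; both points reduce to the observation that $v_\phi$ is $t$-independent once $\phi$ is fixed, plus the a priori $t$-regularity of $\n\mathbf{u}$ available from Lemma \ref{lct}. Once these are in hand, the remainder of the proof is a term-by-term application of Lemma \ref{iod} with no new ideas required.
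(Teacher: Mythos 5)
Your construction of $\tilde F_1$ and $F_2$ via duality against $v_\phi=\mathcal B(\phi)$ is sound: the $\|F_2\|_{L^p}$ bound follows from H\"older and property~(1) of Lemma~\ref{iod}, and the $W^{1,p}$ bound for $\tilde F_1$ does indeed follow by testing with $\phi=\div h$, $h\in C_0^\infty$, and invoking property~(2), since the integrand $\rho\mathbf{u}\cdot v_\phi$ involves $v_\phi$ itself (not its gradient). Up to this point your route is a genuine and reasonably clean alternative to the paper's, which instead sets up three separate Neumann problems and applies classical elliptic $L^p$ theory.

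The gap is in the $W^{1,p}$ estimate for $F_3$, and it is not a cosmetic one. In your decomposition $F_3$ is the piece defined by $\int F_3\,\phi\,dx=-\mu\int\curl\mathbf u\cdot\curl v_\phi\,dx$. To bound $\|\nabla F_3\|_{L^p}$ by duality you must test with $\phi=\div h$ and control $\mu\int\curl\mathbf u\cdot\curl v_{\div h}\,dx$ by $C\|\nabla\mathbf u\|_{L^p}\|h\|_{L^{p'}}$. But this integrand involves $\nabla v_{\div h}$, not $v_{\div h}$, and property~(2) only gives $\|v_{\div h}\|_{L^{p'}}\le C\|h\|_{L^{p'}}$; it does \emph{not} control $\|\nabla v_{\div h}\|_{L^{p'}}$ by $\|h\|_{L^{p'}}$ (indeed that would be false). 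If you instead integrate by parts to move the curl off $v_\phi$, you produce $\int(\nabla\times\curl\mathbf u)\cdot v_\phi\,dx$, which now pairs nicely with property~(2) but costs you two derivatives of $\mathbf u$ --- giving only $\|F_3\|_{W^{1,p}}\le C\|\nabla^2\mathbf u\|_{L^p}$, which is not what is claimed. There is no way to trade this down to one derivative without using the slip condition on $\curl\mathbf u$.

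This is exactly where the paper's argument differs in substance. Using $(\curl\mathbf u+(K\mathbf u)^\perp)\times n=0$ on $\p\OM$ (a consequence of \eqref{i3}), the paper establishes the identity $\int\nabla\times\curl\mathbf u\cdot\nabla\eta\,dx=-\int\nabla\times(K\mathbf u)^\perp\cdot\nabla\eta\,dx$ for arbitrary $\eta\in C^\infty(\OM)$ (equation \eqref{yxtx2}); this replaces the second-order quantity $\nabla\times\curl\mathbf u$ by the first-order quantity $\nabla\times(K\mathbf u)^\perp$ inside the weak formulation, and it is precisely this substitution that makes the claimed $W^{1,p}$ bound for $F_3$ true. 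Your remark that ``the slip condition on $\mathbf u$ is not even invoked here'' is therefore a red flag rather than a simplification: by choosing test vector fields that vanish on $\p\OM$ you have also discarded the only mechanism available to lower the order of the estimate on the $\curl$ term. To repair the proof you would need to either (a) re-derive \eqref{yxtx2}-type information within your duality framework by testing against $\nabla\eta$ (which does not vanish on $\p\OM$, so the boundary condition re-enters), or (b) adopt the paper's definition of $F_3$ as the solution of its own Neumann problem with datum $-\mu\nabla\times(K\mathbf u)^\perp$.
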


\begin{proof}
First, we consider the Neumann problem
\be\la{yxt1}\ba
\begin{cases}
\Delta{\tilde{F}_1} = \div (\n \mathbf{u}) \ \   & \text{ in } \OM, \\
\int_\OM \tilde{F}_1 =0, \ \frac{\p \tilde{F}_1}{\p n}=0 \ \  & \text{ on } \p \OM.
\end{cases}
\ea\ee

By the boundary condition $\mathbf{u} \cdot n = 0$ on $\p \OM$,
we deduce from \cite[Lemma 4.27]{NS} that the system is solvable,
and for any $1<p<\infty$ the solution satisfies
\be\la{yxt2}\ba
\| \tilde{F}_1 \|_{W^{1,p}} \le C \| \n \mathbf{u} \|_{L^p}.
\ea\ee
Defining $F_1 \triangleq \frac{\p}{\p t}\tilde{F}_1$, it follows from (\ref{yxt1}) that $F_1$ satisfies
\be\la{yxt3}\ba
\begin{cases}
\Delta{F_1} = \frac{\p}{\p t} \div (\n \mathbf{u}) \ \   & \text{ in } \OM, \\
\int_\OM F_1 =0, \ \frac{\p F_1}{\p n}=0 \ \  & \text{ on } \p \OM.
\end{cases}
\ea\ee
Next, let $F_2$ be the solution to the boundary value problem:
\be\la{yxt4}\ba
\begin{cases}
\Delta{F_2} = \div \div ( \n \mathbf{u}\otimes \mathbf{u} ) \ \   & \text{ in } \OM, \\
\int_\OM F_2 =0, \ \frac{\p F_2}{\p n}= \div ( \n \mathbf{u}\otimes \mathbf{u} ) \cdot n \ \  & \text{ on } \p \OM.
\end{cases}
\ea\ee
We now estimate $F_2$ using the method in \cite[Appendix II]{FLW}.
For any $g \in C_0^\infty(\OM)$, let $\varphi$ solve the Neumann problem:
\be\ba\nonumber
\begin{cases}
\Delta{\varphi} = g-\ol{g} \ \   & \text{ in } \OM, \\
\frac{\p \varphi}{\p n} = 0 \ \  & \text{ on } \p \OM.
\end{cases}
\ea\ee
The condition $\int_{\OM} (g-\ol{g}) dx = 0$ ensures the solvability of this system,
and by the standard $L^p$ elliptic estimate (see \cite{GT}), for any $1<p<\infty$, we have
\be\la{yxt6}\ba
\| \na^2 \varphi \|_{L^p} \le C \| g \|_{L^p}.
\ea\ee
Note that $\int_\OM F_2 dx=0$ gives
\be\la{yxt7}\ba
\int F_2 \cdot g dx = \int F_2 (g-\ol{g}) dx = \int F_2 \Delta \varphi dx
= -\int \na F_2 \cdot \na \varphi dx,
\ea\ee
where the boundary term vanishes due to $\na \varphi \cdot n =0$ on $\p \OM$.

On the other hand, by virtue of (\ref{yxt4}) and the boundary condition $\mathbf{u} \cdot n=0$ on $\p \OM$, we have
\be\ba\nonumber
\int \na F_2 \cdot \na \varphi dx
= \int \div ( \n \mathbf{u}\otimes \mathbf{u} ) \cdot \na \varphi dx
= \int ( \n \mathbf{u}\otimes \mathbf{u} ) : \na^2 \varphi dx.
\ea\ee
Combining this with (\ref{yxt6}), (\ref{yxt7}), and H\"older's inequality, we derive
\be\ba\nonumber
\left| \int F_2 \cdot g dx \right|
= \left| \int ( \n \mathbf{u}\otimes \mathbf{u} ) : \na^2 \varphi dx \right|
\le C \| \n \mathbf{u}\otimes \mathbf{u} \|_{L^p} \| g \|_{L^{\frac{p}{p-1}}},
\ea\ee
which implies
\be\la{yxt10}\ba
\| F_2 \|_{L^{p}} \le C \| \n \mathbf{u}\otimes \mathbf{u} \|_{L^p}.
\ea\ee
Furthermore, from $(\ref{ns})_2$ and (\ref{yxt01}), we conclude that
\be\la{yxtx1}\ba
\rho \dot{\mathbf{u}} = \na F - \mu \na \times \curl \mathbf{u}.
\ea\ee
Defining $(K \mathbf{u})^\perp \triangleq - (K \mathbf{u}) \times n$
and applying integration by parts to any $\eta \in C^{\infty}(\OM)$, we find that
\be\la{yxtx2}\ba
& \int \na \times \curl \mathbf{u} \cdot \na \eta dx \\
& = \int \na \times (\curl \mathbf{u} + (K \mathbf{u})^\perp) \cdot \na \eta dx
- \int \na \times (K \mathbf{u})^\perp \cdot \na \eta dx \\
& = - \int \na \times (K \mathbf{u})^\perp \cdot \na \eta dx,
\ea\ee
where we have used $(\curl \mathbf{u} + (K \mathbf{u})^\perp) \times n=0$ on $\p \OM$,
due to (\ref{i3}).

The combination of (\ref{yxtx1}) and (\ref{yxtx2}) yields that for any $\eta \in C^{\infty}(\OM)$
\be\ba\nonumber
\int \na F \cdot \na \eta dx
= \int \left( \rho \dot{\mathbf{u}} -\mu \na \times (K \mathbf{u})^\perp \right) \cdot \na \eta dx,
\ea\ee
which shows that $F$ satisfies the following elliptic equation:
\be\la{yxt11}\ba
\begin{cases}
\Delta F=\div \left( \rho \dot{\mathbf{u}} -\mu \na \times (K \mathbf{u})^\perp \right) & \mathrm{in}\, \,  \OM, \\
\frac {\p F}{\p n}= \left( \rho \dot{\mathbf{u}} - \mu \na \times (K \mathbf{u})^\perp \right) \cdot n &\mathrm{on}\, \,  \p \OM.
\end{cases}
\ea\ee
Finally, we set
\be\la{yxt12}\ba
F_3 \triangleq F-\ol{F}-F_1-F_2.
\ea\ee
From (\ref{yxt3}), (\ref{yxt4}), (\ref{yxt11}), and the boundary condition $\mathbf{u} \cdot n=0$ on $\p \OM$,
we deduce that $F_3$ satisfies
\be\ba\nonumber
\begin{cases}
\Delta F_3=-\mu \div \left( \na \times (K \mathbf{u})^\perp \right) & \mathrm{in}\, \,  \OM, \\
\int_\OM F_3 =0,\ \frac {\p F_3}{\p n}= -\mu \left( \na \times (K \mathbf{u})^\perp \right) \cdot n &\mathrm{on}\, \,  \p \OM.
\end{cases}
\ea\ee
By the standard elliptic estimate (see \cite{NS}), we obtain for any $1<p<\infty$ that
\be\ba\nonumber
\| F_3 \|_{W^{1,p}} \le C \| \na \mathbf{u} \|_{L^p}.
\ea\ee
This combined with (\ref{yxt2}), (\ref{yxt10}), and (\ref{yxt12}) gives
(\ref{yxt02}) and (\ref{yxt03}),
thus completing the proof of Lemma \ref{l2}.
\end{proof}

Building upon the decomposition of $F$, we now establish the $L^\infty(0,T;L^p)$ estimate of the density.
Using the definition of $F$, we rewrite $(\ref{ns})_2$ as
\be\ba\nonumber
\frac{d}{dt} \theta(\n) + P(\n) = -(F-\ol{F}) - \ol{F},
\ea\ee
where $\theta(\n) = 2\mu \log\n + \frac{1}{\beta} \n^\beta$.
By applying (\ref{yxt02}), we obtain
\be\ba\nonumber
\frac{d}{dt} \left( \theta(\n) + \tilde{F}_1 \right) + P(\n)
= \mathbf{u} \cdot \na \tilde{F}_1 - F_2 - F_3 - \ol{F}.
\ea\ee

With the help of (\ref{kv01}), (\ref{yxt03}) and Lemma \ref{ewgn},
along with arguments analogous to those in \cite[Corollary 3.1 and Proposition 3.3]{FLW},
we derive the following time-uniform estimates:

\begin{lemma}\la{l3}
Let $g_{+} \triangleq \max\{ g,0 \}$, then for any $2 \le p <\infty$,
there exist positive constants $C$ and $M$ depending only on
$p$, $\mu$, $\ga$, $\beta$, $\| \n_0 \|_{L^\infty}$, $\| \mathbf{u}_0 \|_{H^1}$, and $K$, such that
\be\la{yzgj1}\ba
\sup_{0\le t\le T} \| \n \|_{L^p} + \int_0^T \int_{\OM} (\n-M)^p_{+} dxdt \le C,
\ea\ee
\be\la{yzgj2}\ba
\int_0^T \int_{\OM} (\n+1)^{\ga-1} (\n-\ol{\n})^2 dx dt \le C.
\ea\ee
\end{lemma}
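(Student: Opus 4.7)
The plan is to derive a transport-type evolution for a renormalized quantity built from $\rho$ and $\tilde{F}_1$, then multiply by an appropriate power, integrate, and close via a Gronwall-type argument. First, I would introduce the primitive $\theta(\rho) := 2\mu\log\rho + \frac{1}{\beta}\rho^\beta$, which is chosen so that $\rho\theta'(\rho) = 2\mu + \lambda(\rho)$. Combining this with the continuity equation $(\ref{ns})_1$ (rewritten as $\div \mathbf{u} = -D_t\log\rho$) yields the identity $(2\mu+\lambda(\rho))\div\mathbf{u} = -D_t\theta(\rho)$. Substituting into the definition $F = (2\mu+\lambda)\div\mathbf{u} - P$ and then using the decomposition $F - \overline{F} = \p_t\tilde{F}_1 + F_2 + F_3$ from Lemma \ref{l2} together with $\p_t\tilde{F}_1 = D_t\tilde{F}_1 - \mathbf{u}\cdot\nabla\tilde{F}_1$ gives the balance law
\begin{equation*}
\frac{D}{Dt}\bigl(\theta(\rho)+\tilde{F}_1\bigr) + P(\rho) = \mathbf{u}\cdot\nabla\tilde{F}_1 - F_2 - F_3 - \overline{F}.
\end{equation*}

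Next, I would set $\Psi := (\theta(\rho) + \tilde{F}_1 - M)_+$ for a sufficiently large constant $M$, multiply the balance law by $p\Psi^{p-1}$, and integrate over $\OM$. Processing the material derivative via $(\ref{ns})_1$ produces
\begin{equation*}
\frac{d}{dt}\int \Psi^p\,dx + p\int P(\rho)\Psi^{p-1}\,dx = \int \Psi^p\,\div\mathbf{u}\,dx + p\int\bigl(\mathbf{u}\cdot\nabla\tilde{F}_1 - F_2 - F_3 - \overline{F}\bigr)\Psi^{p-1}\,dx.
\end{equation*}
The crucial point is that, since $\|\tilde{F}_1\|_{L^\infty}$ can be controlled via Lemma \ref{ewgn} and the estimate $\|\tilde{F}_1\|_{W^{1,p}} \le C\|\rho\mathbf{u}\|_{L^p}$ (using energy estimate (\ref{kv01}) and a standard bootstrap in $p$), choosing $M$ large forces $\rho$ to be large on the set $\{\Psi>0\}$. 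On that set $P(\rho) = \rho^\gamma \gtrsim \theta(\rho)^{\gamma/\beta}$, which provides the coercive damping needed to absorb the $\Psi^p\div\mathbf{u}$ term (using $\div\mathbf{u} = (F+P)/(2\mu+\lambda(\rho))$ and the growth $\lambda(\rho) = \rho^\beta$ with $\beta > 4/3$).

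For the right-hand side, I would bound each term using Lemma \ref{l2}, the axisymmetric Gagliardo--Nirenberg inequalities (Lemma \ref{ewgn}, which reduce the three-dimensional analysis to a genuinely two-dimensional one because the domain avoids the axis), and (\ref{kv01}). The terms $F_3$ and $\mathbf{u}\cdot\nabla\tilde{F}_1$ are relatively easy; the genuinely nonlinear piece is $F_2$, whose $L^p$ bound requires estimating $\|\rho\mathbf{u}\otimes\mathbf{u}\|_{L^p}$ via Hölder and Gagliardo--Nirenberg interpolation against $\|\rho\|_{L^p}$ itself. One then arrives at a differential inequality of the type
\begin{equation*}
y'(t) + c\,y(t) \le C_1 y(t)^\sigma + C_2,\qquad y(t):= 1 + \int\Psi^p\,dx,
\end{equation*}
with some exponent $\sigma < 1$ depending on $\beta$ and $\gamma$. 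A Gronwall-type argument then yields the uniform bound on $y(t)$, which upon translating back through $\theta$ and integrating the coercive term produces (\ref{yzgj1}). Finally, (\ref{yzgj2}) follows by splitting the domain into $\{\rho\le M_1\}$ (where $(\rho+1)^{\gamma-1}(\rho-\overline{\rho})^2$ is bounded by a multiple of $|\nabla\mathbf{u}|^2$ via $\div\mathbf{u}$) and $\{\rho > M_1\}$ (where (\ref{yzgj1}) gives direct control).

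The main obstacle is verifying that the nonlinear exponent $\sigma$ in the Gronwall inequality is strictly below one, which is exactly where the hypothesis $\beta>4/3$ enters. The $F_2$ piece, arising from the convective term $\div(\rho\mathbf{u}\otimes\mathbf{u})$, interacts with the $\rho^\beta$-diffusion in $\lambda(\rho)$ through a borderline two-dimensional scaling, and only $\beta > 4/3$ forces the effective power of $\rho$ on the right to remain strictly subcritical. A secondary technical point is handling the boundary contributions generated by the slip conditions and by $K$, but these have already been absorbed into the elliptic problem for $F$ in Lemma \ref{l2} and into the weighted div-curl estimate of Lemma \ref{wdc}, so no new boundary work is required here.
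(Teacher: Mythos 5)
Your proof proposal correctly reconstructs the central identity the paper records before the lemma, namely
\[
\frac{D}{Dt}\bigl(\theta(\rho)+\tilde{F}_1\bigr)+P(\rho)
=\mathbf{u}\cdot\nabla\tilde{F}_1-F_2-F_3-\overline{F},
\qquad \theta(\rho)=2\mu\log\rho+\tfrac{1}{\beta}\rho^\beta,
\]
and the subsequent strategy — multiply by a power of a truncated renormalized quantity, integrate, and close with a Gronwall-type argument using Lemma \ref{l2}, the axisymmetric Gagliardo--Nirenberg inequalities of Lemma \ref{ewgn}, and (\ref{kv01}) — is indeed the FLW route the paper cites, so for (\ref{yzgj1}) you are taking essentially the same approach as the paper.

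Two concrete issues. First, your derivation of (\ref{yzgj2}) by splitting $\OM$ into $\{\rho\le M_1\}$ and $\{\rho>M_1\}$ has a real gap: on $\{\rho\le M_1\}$ you assert that $(\rho+1)^{\gamma-1}(\rho-\overline{\rho})^2$ is controlled pointwise by a multiple of $|\nabla\mathbf{u}|^2$ ``via $\div\mathbf{u}$,'' but there is no such pointwise relation between $\rho-\overline{\rho}$ and $\nabla\mathbf{u}$, and the energy estimate (\ref{kv01}) alone does not give $\int_0^T\int(\rho-\overline{\rho})^2\,dx\,dt\le C$. The estimate (\ref{yzgj2}) must be produced by an actual computation — e.g.\ a duality pairing of the momentum equation with a suitable test function built from the density fluctuation (via an operator of Bogovskii type as in Lemma \ref{iod}) or directly from the $F$-decomposition — not by a pointwise split. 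Second, your claim that $\beta>4/3$ is exactly what makes the Gronwall exponent $\sigma$ subcritical in this lemma is most likely misplaced: in this paper the sharp use of $\beta>4/3$ occurs in Lemma \ref{l9}, where one needs $\beta>1+\beta/4$ to close (\ref{kv97}) via Zlotnik's lemma; the $L^p$ density estimate and (\ref{yzgj2}) are expected to hold under weaker assumptions on $\beta$, and the $F_2$ term is handled by interpolation plus the damping from $P(\rho)$ rather than by a $\beta$-versus-$4/3$ dichotomy. You should not present $\beta>4/3$ as the binding constraint here without exhibiting the exponent bookkeeping that would make it so.
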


\begin{lemma}\la{l4}
There exists a positive constant $C$ depending only on $\mu$, $\ga$, $\beta$, $K$, $\|\n_0\|_{L^\infty}$,
and $\|\mathbf{u}_0\|_{H^1}$, such that
\be\la{kv04}\ba
\sup_{0\le t\le T}\int \n |\mathbf{u}|^{2+\nu} dx \le C,
\ea\ee
where
\be\la{kv004}\ba
\nu \triangleq R_T^{-\frac{\beta}{2}} \nu_0,
\ea\ee
for some suitably small generic constant $\nu_0 \in (0,1)$ depending only on $\mu$ and $\ga$.
\end{lemma}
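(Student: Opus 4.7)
The plan is to test the momentum equation $(\ref{ns})_2$ against $|\mathbf{u}|^{\nu}\mathbf{u}$ and integrate over $\OM$. Using $(\ref{ns})_1$ together with $\mathbf{u}\cdot n=0$ on $\p\OM$, the transport part produces $\frac{d}{dt}\int\n|\mathbf{u}|^{2+\nu}/(2+\nu)\,dx$. Decomposing $\Delta\mathbf{u}=\na\div\mathbf{u}-\na\times\curl\mathbf{u}$ and integrating by parts, the viscous terms generate the bulk dissipation $\int|\mathbf{u}|^{\nu}[(2\mu+\lam)(\div\mathbf{u})^2+\mu|\curl\mathbf{u}|^2]\,dx$, a boundary term $\mu\int_{\p\OM}|\mathbf{u}|^\nu\mathbf{u}\cdot K\mathbf{u}\,ds$ arising from $\curl\mathbf{u}\times n=-K\mathbf{u}$ on $\p\OM$ (which is non-negative and stays on the left-hand side), plus cross terms involving $\na|\mathbf{u}|^\nu$ that come with an explicit factor of $\nu$. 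The pressure contribution is $\int P\div(|\mathbf{u}|^\nu\mathbf{u})\,dx$, and since $\int\div(|\mathbf{u}|^\nu\mathbf{u})\,dx=0$ by the slip condition, $P$ may be replaced by $P-\ol P$.

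I then absorb the cross terms. Using $|\na|\mathbf{u}|^\nu|\le\nu|\mathbf{u}|^{\nu-1}|\na\mathbf{u}|$ and Young's inequality with a small weight, the cross terms are dominated by a small multiple of the bulk dissipation plus $C\nu^2\int(2\mu+\lam)|\mathbf{u}|^\nu|\na\mathbf{u}|^2\,dx$. Applying the weighted div-curl estimate of Lemma \ref{wdc} and using $2\mu+\lam\le CR_T^\beta$, this last term is further bounded by $C\nu^2R_T^\beta\int|\mathbf{u}|^\nu[(\div\mathbf{u})^2+|\curl\mathbf{u}|^2]\,dx+C\nu^2R_T^\beta\int_{\p\OM}|\mathbf{u}|^\nu\mathbf{u}\cdot K\mathbf{u}\,ds$. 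The defining choice $\nu=\nu_0R_T^{-\beta/2}$ collapses the prefactor $\nu^2R_T^\beta$ to $\nu_0^2$, so picking $\nu_0$ sufficiently small (depending only on $\mu$ and $\ga$) allows this contribution to be absorbed into the left-hand side.

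For the pressure term $\int(P-\ol P)\div(|\mathbf{u}|^\nu\mathbf{u})\,dx$, Cauchy--Schwarz and Young's inequality once more separate an absorbable piece from $C\int(P-\ol P)^2|\mathbf{u}|^\nu\,dx$. Since $\nu\in(0,1)$, the elementary bound $|\mathbf{u}|^\nu\le 1+|\mathbf{u}|^2$, combined with H\"older's inequality, the uniform $L^p$ bounds on $\n$ from Lemma \ref{l3}, the Gagliardo--Nirenberg--Sobolev inequalities of Lemma \ref{ewgn}, and the basic energy estimate of Lemma \ref{l1} (together with Lemma \ref{pt} to convert $\|\mathbf{u}\|_{L^2}$ control from $\int\n|\mathbf{u}|^2$ plus $\|\na\mathbf{u}\|_{L^2}$), give $\int(P-\ol P)^2|\mathbf{u}|^\nu\,dx\le C$ with $C$ independent of $R_T$. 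Collecting everything yields a differential inequality of the form $\frac{d}{dt}\int\n|\mathbf{u}|^{2+\nu}\,dx+(\text{non-negative terms})\le C$, and integrating in time closes the estimate.

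The main obstacle is the delicate tracking of the $R_T$-dependence through every step: the exponent $\nu=\nu_0R_T^{-\beta/2}$ is uniquely calibrated so that the coefficient $\nu^2R_T^\beta$ saturates at $\nu_0^2$, which is the only smallness one can afford for absorption. A secondary subtlety is maintaining the correct sign on every boundary integral, where the hypothesis that $K+2D(n)$ is positive definite on $\Sigma$ enters through Lemma \ref{wdc}; and ensuring that the H\"older exponents used to estimate $\int(P-\ol P)^2|\mathbf{u}|^\nu\,dx$ can be chosen independently of $\nu$, so that the constant $C$ in (\ref{kv04}) does not deteriorate as $R_T\to\infty$.
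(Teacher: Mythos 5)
Your structural approach mirrors the paper's: test $(\ref{ns})_2$ against $|\mathbf{u}|^\nu\mathbf{u}$, keep the dissipation and the boundary term $\mu\int_{\p\OM}|\mathbf{u}|^\nu\mathbf{u}\cdot K\cdot\mathbf{u}\,ds$ on the left, absorb the $\nu$-weighted cross terms via Cauchy's inequality and Lemma \ref{wdc}, and crucially observe that the calibration $\nu=\nu_0R_T^{-\beta/2}$ collapses the dangerous factor $\nu^2R_T^\beta$ to $\nu_0^2$. This is exactly the paper's treatment of the term $I_1$ in (\ref{kv42}), and it is done correctly.

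The gap is in the final integration step. You reduce the right-hand side of the differential inequality to $C\int(P-\ol P)^2|\mathbf{u}|^\nu\,dx$ and argue this is bounded by a constant $C$ uniformly in $t$ (and in $R_T$), then claim that ``integrating in time closes the estimate.'' But Lemma \ref{l4} asserts $\sup_{0\le t\le T}\int\n|\mathbf{u}|^{2+\nu}\,dx\le C$ with $C$ \emph{independent of} $T$; integrating $\frac{d}{dt}\int\n|\mathbf{u}|^{2+\nu}\,dx\le C$ only produces $\int\n|\mathbf{u}|^{2+\nu}(T)\le\int\n_0|\mathbf{u}_0|^{2+\nu}+CT$, which grows linearly. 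A pointwise-in-time bound on the forcing is not enough: you need the right-hand side to be $L^1$ in time with a $T$-independent integral. This is precisely what the paper's estimate (\ref{kv43}) delivers: the pressure contribution $I_2$ is bounded, via Young's inequality with carefully chosen exponents, by $C\int(\n-M_1)_+^{s(\ga-1)}\,dx+CA_1^2$, and these quantities are $L^1(0,\infty)$ by Lemmas \ref{l1} and \ref{l3} (estimates (\ref{kv01}), (\ref{yzgj1}), (\ref{yzgj2})). Your quantity $\int(P-\ol P)^2|\mathbf{u}|^\nu\,dx$ can in fact also be shown to be $L^1$ in time using the same ingredients — split $|\mathbf{u}|^\nu\le 1+|\mathbf{u}|^2$, split the density into $\{\n\le M_1\}$ and $\{\n>M_1\}$, and invoke (\ref{yzgj1}), (\ref{yzgj2}), (\ref{kv01}) — but this step must be carried out explicitly; without it the argument does not give a $T$-uniform bound. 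Your concern in the last paragraph about the H\"older exponents being independent of $\nu$ addresses $R_T$-independence but overlooks this distinct issue of time-integrability.
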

\begin{proof}
First, multiplying $(\ref{ns})_2$ by $(2+\nu)|\mathbf{u}|^\nu \mathbf{u}$ and integrating over $ \OM$, we derive
\be\la{kv41}\ba 
& \frac{1}{(2+\nu)} \frac{d}{dt}\int \n |\mathbf{u}|^{2+\nu} dx
+ \int |\mathbf{u}|^\nu \left(\mu |\curl \mathbf{u}|^2
+ (2\mu+\lam) (\div \mathbf{u})^2 \right) dx
+ \mu \int_{\p \OM} \mathbf{u} \cdot K \cdot \mathbf{u} |\mathbf{u}|^\nu dS \\
& \le C \nu  \int  \left( (2\mu+\lam) |\div \mathbf{u}|+\mu |\curl \mathbf{u}| \right)  |\mathbf{u}|^\nu |\na \mathbf{u}| dx
+ C \int |\n^\ga - \ol{\n}^\ga| |\mathbf{u}|^\nu |\na \mathbf{u}|dx \\
& \triangleq I_1+I_2.
\ea\ee
It follows from (\ref{wdc1}) and Young's inequality that
\be\la{kv42}\ba
I_1 & \le \frac{1}{2} \int |\mathbf{u}|^\nu \left(\mu |\curl \mathbf{u}|^2
+ (2\mu+\lam) (\div \mathbf{u})^2 \right) dx
+ \frac{C\nu_0^2}{2} \int |\mathbf{u}|^\nu |\na \mathbf{u}|^2 dx \\
& \le \frac{1+\hat{C}\nu_0^2}{2} \int |\mathbf{u}|^\nu \left(\mu |\curl \mathbf{u}|^2
+ (2\mu+\lam) (\div \mathbf{u})^2 \right) dx
+ \hat{C} \nu_0^2 \mu \int_{\p \OM} \mathbf{u} \cdot K \cdot \mathbf{u} |\mathbf{u}|^\nu ds,
\ea\ee
provided $\nu \in (0,\hat{\nu})$, where $\hat{C}$ depends only on $\mu$.

Then, when $\nu < \frac{\ga-1}{\ga+1}$, for $s$ satisfying
$\frac{1}{s}=\frac{1-\nu}{2}-\frac{1}{\ga+1}$, by applying
Young's and Poincar\'e's inequalities, we obtain
\be\la{kv43}\ba
I_2 & \le C \int (\n^{\ga-1}+1) |\n-\ol{\n}| |\mathbf{u}|^\nu |\na \mathbf{u}| dx \\
& \le C \int \left( (\n-M)_{+}^{\ga-1} + 1 \right) |\n-\ol{\n}|
|\mathbf{u}|^\nu |\na \mathbf{u}| dx \\
& \le C \left( \int (\n-M)_{+}^{s(\ga-1)} dx
+ \int \left( |\n-\ol{\n}|^{\ga+1} + |\n-\ol{\n}|^{\frac{2}{1-\nu}} \right) dx
+ \int |\na \mathbf{u}|^2 dx \right) \\
& \le C \int (\n-M)_{+}^{s(\ga-1)} dx + C A^2_1,
\ea\ee
where in the last inequality we have used the following estimate:
\be\ba\nonumber
|\n-\ol{\n}|^{\frac{2}{1-\nu}} \le C(\n+1)^{\frac{2\nu}{1-\nu}} (\n-\ol{\n})^2
\le C(\n+1)^{\ga-1} (\n-\ol{\n})^2,
\ea\ee
due to $\nu < \frac{\ga-1}{\ga+1}$.

Substituting (\ref{kv42}) and (\ref{kv43}) into (\ref{kv41}), and choosing
$\nu_0 < \min \left\{\hat{\nu},\frac{1}{\sqrt{2\hat{C}}},\frac{\ga-1}{\ga+1} \right\}$ yields
\be\la{kv44}\ba 
\frac{d}{dt}\int \n |\mathbf{u}|^{2+\nu}dx
\le C \int (\n-M)_{+}^{s(\ga-1)} dx + C A^2_1.
\ea\ee

Therefore, integrating (\ref{kv44}) over $(0,T)$ and using (\ref{kv01}), 
(\ref{yzgj1}), and (\ref{yzgj2}), we arrive at (\ref{kv04}) and finish the proof of Lemma \ref{l4}.
\end{proof}

For $2<p<\infty$, the following estimate of $\| \na \mathbf{u} \|_{L^p}$ will be frequently used and is crucial in the subsequent estimates.
\begin{lemma}\la{l5}
For any $2 < p<\infty$ and $\ep \in (0,1)$,
there exists a positive constant $C$ depending only on $\mu$, $\ga$, $\ep$, $p$, and $\beta$, such that
\be\la{kv05}\ba
\| \nabla \mathbf{u} \|_{L^{p}}
& \le C R^{\frac{1}{2}-\frac{1}{p}+\ep}_T (1+A_1)^{\frac{2}{p}}
(1+A_1+A_2)^{1-\frac{2}{p}}.
\ea\ee
Moreover, when $p<\frac{2(\ga+1)}{\ga}$ and $\ga<2\beta$, we have
\be\la{kv005}\ba
\| \nabla \mathbf{u} \|_{L^{p}}
& \le C R^{\frac{1}{2}-\frac{1}{p}+\ep}_T A_1^{\frac{2}{p}} (1+A_1+A_2)^{1-\frac{2}{p}}.
\ea\ee
\end{lemma}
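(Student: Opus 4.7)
The plan is to reduce $\|\nabla \mathbf{u}\|_{L^p}$ to the divergence and curl via the div-curl inequality (Lemma \ref{dc}), then apply Gagliardo-Nirenberg interpolation (Lemma \ref{ewgn}) between $L^2$ and $H^1$ norms of the effective viscous flux $G$ and the vorticity $\curl \mathbf{u}$. First, Lemma \ref{dc} yields
\begin{equation*}
\|\nabla \mathbf{u}\|_{L^p} \le C\bigl(\|\div \mathbf{u}\|_{L^p} + \|\curl \mathbf{u}\|_{L^p} + \|\mathbf{u}\|_{L^p}\bigr),
\end{equation*}
where the last term is harmless: by Lemma \ref{ewgn} and Lemma \ref{pt}, $\|\mathbf{u}\|_{L^p}\le C\|\mathbf{u}\|_{H^1}\le C(1+A_1)$. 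For the divergence, the identity $(2\mu+\lambda)\div \mathbf{u} = G + (P - P(\bar\rho))$ combined with $\lambda\ge 0$ gives $\|\div \mathbf{u}\|_{L^p}\le (2\mu)^{-1}\|G\|_{L^p} + C\|(P-P(\bar\rho))/(2\mu+\lambda)\|_{L^p}$; the pressure ratio is controlled by the $L^\infty$ bound on $\rho$ combined with $\|P-P(\bar\rho)\|_{L^2}\le CR_T^{(\gamma-1)/2}A_1$.

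Next, I would derive the elliptic equation for $\curl \mathbf{u}$ from $(\ref{ns})_2$: $\mu \nabla \times \curl \mathbf{u} = \nabla G - \rho \dot{\mathbf{u}}$ with the boundary condition $(\curl \mathbf{u}+(K\mathbf{u})^\perp)\times n=0$ on $\partial\OM$, together with the equation (\ref{yxt11}) for $G$. Standard $L^2$ elliptic theory then yields
\begin{equation*}
\|\nabla G\|_{L^2} + \mu\|\nabla \curl \mathbf{u}\|_{L^2} \le C\bigl(\|\rho \dot{\mathbf{u}}\|_{L^2}+\|\nabla \mathbf{u}\|_{L^2}\bigr) \le C R_T^{1/2}(1+A_1+A_2),
\end{equation*}
using $\|\rho\dot{\mathbf{u}}\|_{L^2}\le R_T^{1/2}\|\sqrt{\rho}\dot{\mathbf{u}}\|_{L^2}=R_T^{1/2}A_2$. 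The companion $L^2$ bounds are $\|\curl \mathbf{u}\|_{L^2}\le CA_1$ and $\|G\|_{L^2}\le C R_T^{\max(\beta,\gamma-1)/2}(1+A_1)$, the latter coming from the definition of $A_1$ (which weights $(\div \mathbf{u})^2$ by $(2\mu+\lambda)$) and from $|P-P(\bar\rho)|^2\le CR_T^{\gamma-1}(\rho+1)^{\gamma-1}(\rho-\bar\rho)^2$.

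Applying Gagliardo-Nirenberg,
\begin{equation*}
\|G\|_{L^p}+\|\curl \mathbf{u}\|_{L^p} \le Cp^{1/2}\bigl(\|G\|_{L^2}+\|\curl \mathbf{u}\|_{L^2}\bigr)^{2/p}\bigl(\|G\|_{H^1}+\|\curl \mathbf{u}\|_{H^1}\bigr)^{1-2/p},
\end{equation*}
and substituting the preceding bounds yields an estimate of the form $CR_T^{\alpha(p)}(1+A_1)^{2/p}(1+A_1+A_2)^{1-2/p}$. The crucial point is that the $R_T^{\max(\beta,\gamma-1)/2}$ factor attached to $\|G\|_{L^2}$ is offset in the divergence by the weight $(2\mu+\lambda)^{-1}$, which decays like $\rho^{-\beta}$ on the region where $\rho$ is large: this cancellation reduces the overall exponent to $1/2-1/p$ modulo a small loss $\epsilon$ absorbing a Young inequality distributing the mismatched powers. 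This gives (\ref{kv05}). For the refined bound (\ref{kv005}), the restriction $p<2(\gamma+1)/\gamma$ allows estimating the pressure fluctuation in $L^p$ purely by $A_1$ (without the additive constant), while $\gamma<2\beta$ ensures that $(P-P(\bar\rho))/(2\mu+\lambda)$ is bounded by a genuine $A_1$-type quantity, removing the "$+1$" in the first factor.

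The main obstacle will be bookkeeping the $R_T$ exponents to match exactly $R_T^{1/2-1/p+\epsilon}$: the naive combination of $R_T^{\max(\beta,\gamma-1)/2}$ from $\|G\|_{L^2}$ and $R_T^{1/2}$ from $\|\nabla G\|_{L^2}$ would produce a much larger power, so the argument depends on using the weighted bound $\|\div\mathbf{u}\|_{L^p}\le C\|G/(2\mu+\lambda)\|_{L^p}+\cdots$ rather than $\|G\|_{L^p}$ directly, and splitting the integrations over $\{\rho\le 1\}$ and $\{\rho>1\}$ so that the $\lambda=\rho^\beta$ weight annihilates the excess growth. Getting this cancellation to hold uniformly in $p$ with only an arbitrarily small $\epsilon$ loss is the delicate part of the proof.
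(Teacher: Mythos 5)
Your overall strategy — div-curl reduction via Lemma \ref{dc}, isolating the weighted quantity $G/(2\mu+\lambda)$ as the anchor, and pairing the $L^2$ bounds with the $H^1$ bounds from the elliptic system (\ref{kv55})--(\ref{kv57}) through Gagliardo-Nirenberg — is the correct skeleton and matches the paper's proof in outline. You also correctly recognize the central tension: $\|G\|_{L^2}$ alone costs a factor $R_T^{\beta/2}$, while the weighted $\|G/(2\mu+\lambda)\|_{L^2}$ is controlled by $1+A_1$ with no $R_T$ loss, and the resolution must route almost all of the $L^p$ power through the weighted quantity.

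However, the mechanism you offer for achieving this — splitting over $\{\rho\le 1\}$ vs.\ $\{\rho>1\}$ and a Young-inequality redistribution — is not what makes the exponent $\tfrac12-\tfrac1p+\ep$ come out, and it is not clear it would: on $\{\rho>1\}$ one still needs to bound $\int_{\{\rho>1\}}|G|^p/(2\mu+\lambda)^p$, and neither the split nor Young's inequality directly converts this into $A_1$-controlled quantities without reintroducing the full $R_T^{\beta}$. What the paper actually does is a \emph{double interpolation} in $L^p$-scales. First it writes
\be\nonumber
\Big\|\frac{G}{2\mu+\lam}\Big\|_{L^p}
\le \Big\|\frac{G}{2\mu+\lam}\Big\|_{L^2}^{\frac{2}{p}-\ep}\,\Big\|\frac{G}{2\mu+\lam}\Big\|_{L^{q(\ep)}}^{1-\frac{2}{p}+\ep},
\quad q(\ep)=\frac{2(1+\ep)p-4}{p\ep},
\ee
so that almost all the weight (exponent $\tfrac2p-\ep$) falls on the favorable weighted $L^2$ norm, and only an $\ep$-fraction lands on the unweighted part (after crudely bounding $(2\mu+\lambda)^{-1}\le(2\mu)^{-1}$ in the high-$L^q$ factor). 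The intermediate exponent $q(\ep)\sim 1/\ep$ is chosen precisely so the interpolation is consistent with $1/p$. Then a second Gagliardo-Nirenberg between $L^2$ and $H^1$ on $\|G\|_{L^{q(\ep)}}$ costs only $\|G\|_{L^2}^{\ep}\|G\|_{H^1}^{1-2/p}$, yielding the factor $R_T^{\beta\ep/2}$ from $\|G\|_{L^2}^\ep$ (via (\ref{kv59})) and $R_T^{(1/2)(1-2/p)}$ from $\|G\|_{H^1}^{1-2/p}$ (via (\ref{kv58})); together these give $R_T^{1/2-1/p+\beta\ep/2}$, i.e.\ (\ref{kv05}) after renaming $\ep$. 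Your proposal names the bottleneck but does not supply this interpolation step, which is the nontrivial content of the lemma; without it the argument as written would not close. The refinement (\ref{kv005}) then follows by replacing (\ref{kv59}) with the sharper bounds (\ref{kv511})--(\ref{kv513}), which hinge on the stated parameter restrictions to make the pressure ratio $\|(P-P(\bar\rho))/(2\mu+\lam)\|_{L^p}\le CA_1^{2/p}$ a pure $A_1$ quantity, as you correctly intuited.
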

\begin{proof}
First, choosing $\mathbf{f}=\mathbf{u}$ and $\mathbf{f}=\curl \mathbf{u}$ in Lemma \ref{ewgn}, respectively, and applying Poincar\'e's inequality, we obtain
\be\la{kv51}\ba
\| \mathbf{u} \|_{L^p}
\le C \| \mathbf{u} \|^{\frac{2}{p}}_{L^2}
\| \mathbf{u} \|^{1-\frac{2}{p}}_{H^1}
\le C \| \na \mathbf{u} \|_{L^2},
\ea\ee
and
\be\la{kv52}\ba
\| \curl \mathbf{u} \|_{L^p}
\le C \| \curl \mathbf{u} \|^{\frac{2}{p}}_{L^2}
\| \curl \mathbf{u} \|^{1-\frac{2}{p}}_{H^1}.
\ea\ee
In addition, we define the effective viscous flux $G$ by
\be\ba\la{gw}
G \triangleq (2\mu + \lam)\div \mathbf{u} - (P-P(\ol{\n})),
\ea\ee
and take $g=G$ in Lemma \ref{ewgn} to arrive at
\be\la{kv53}\ba
\| G \|_{L^p}
\le C \| G \|^{\frac{2}{p}}_{L^2} \| G \|^{1-\frac{2}{p}}_{H^1}.
\ea\ee
By virtue of (\ref{gw}), we rewrite $(\ref{ns})_2$ as 
\be\la{kv54}\ba
\n\dot{\mathbf{u}}= \na G - \mu \na\times \curl \mathbf{u},
\ea\ee
which together with the boundary conditions (\ref{i3}) implies that $G$ satisfies the following elliptic equation:
\be\la{kv55}\ba
\begin{cases}
\Delta G=\div \left( \rho \dot{\mathbf{u}} -\mu \na \times (K \mathbf{u})^\perp \right) & \mathrm{in}\, \,  \OM, \\
\frac {\p G}{\p n}= \left( \rho \dot{\mathbf{u}} - \mu \na \times (K \mathbf{u})^\perp \right) \cdot n &\mathrm{on}\, \,  \p \OM.
\end{cases}
\ea\ee

By the standard $L^p$ estimate of elliptic equations as stated in \cite[Lemma 4.27]{NS}, we obtain that for any integer $k \ge 0$ and $1<p<\infty$,
\be\la{kv56}\ba
\| \na G \|_{W^{k,p}} \le C \left( \| \n \dot{\mathbf{u}}\|_{W^{k,p}}
+ \| \na \times (K \mathbf{u})^\perp \|_{W^{k,p}} \right),
\ea\ee
where $C$ depends only on $\mu$, $p$, $k$, and $\OM$.

Note that $(\curl \mathbf{u} + (K \mathbf{u})^\perp) \times n = 0$ on $\p \OM$ and $\div(\na \times \curl \mathbf{u}) = 0$,
and combining this with (\ref{kv54}), (\ref{kv56}) and Lemma \ref{dc}, we derive
\be\la{kv57}\ba
\| \na \curl \mathbf{u} \|_{W^{k,p}}
\le C \left( \| \n \dot{\mathbf{u}}\|_{W^{k,p}}
+ \| \na (K \mathbf{u})^\perp \|_{W^{k,p}} + \| \na \mathbf{u} \|_{L^p} \right).
\ea\ee
In particular, (\ref{kv56}), (\ref{kv57}) and Poincar\'e's inequality lead to
\be\la{kv58}\ba
\| G \|_{H^1} + \| \curl \mathbf{u} \|_{H^1}
& \le C \left( \| \rho \dot{\mathbf{u}} \|_{L^2} + \| \na \mathbf{u} \|_{L^2} \right) + C |\ol{G}| \\
& \le C R^{1/2}_T A_2 + C A_1,
\ea\ee
where in the last inequality we have used the following estimate:
\be\ba\nonumber
\left| \int_\OM G dx \right|
= \left| \int_\OM \lam(\n) \div \mathbf{u} dx \right|
\le C A_1.
\ea\ee
Furthermore, (\ref{yzgj1}) and H\"older's inequality ensure that
\be\la{kv59}\ba
\| G \|^2_{L^2} \le C (1+R^\beta_T A^2_1), \quad
\left\| \frac{G}{2\mu+\lam} \right\|^2_{L^2} \le C(1+A^2_1).
\ea\ee
The combination of (\ref{dc1}), (\ref{kv51}), (\ref{kv52}), (\ref{kv53}), and (\ref{kv59}) implies that
\be\la{kv510}\ba
\| \na \mathbf{u} \|_{L^p}
& \le C \left(\| \div \mathbf{u} \|_{L^p}
+ \| \curl \mathbf{u} \|_{L^p} + \| \mathbf{u} \|_{L^p} \right) \\
& \le C \left\| \frac{G}{2\mu+\lam} \right\|_{L^p}
+ C \left\| \frac{P-P(\ol{\n})}{2\mu+\lam} \right\|_{L^p}
+ C \| \curl \mathbf{u} \|^{\frac{2}{p}}_{L^2} \| \curl \mathbf{u} \|^{1-\frac{2}{p}}_{H^1} + C A_1 \\
& \le C \left\| \frac{G}{2\mu+\lam} \right\|^{\frac{2}{p}-\ep}_{L^2}
\| G \|_{L^{\frac{2(1+\ep)p-4}{p\ep}}}^{-\frac{2}{p}+1+\ep}
+ C \left( A^{\frac{2}{p}}_1 \| \curl \mathbf{u} \|^{1-\frac{2}{p}}_{H^1}
+ A_1 + 1 \right) \\
& \le C (1+A_1)^{\frac{2}{p}-\ep} \| G \|^{\ep}_{L^2} \| G \|^{1-\frac{2}{p}}_{H^1}
+ C \left( A^{\frac{2}{p}}_1 \| \curl \mathbf{u} \|^{1-\frac{2}{p}}_{H^1}
+ A_1 + 1 \right) \\
& \le C R^{\frac{\beta \ep}{2}}_T (1+A_1)^{\frac{2}{p}}
\left( \| G \|_{H^1} + \| \curl \mathbf{u} \|_{H^1} \right)^{1-\frac{2}{p}}
+ C (1+A_1),
\ea\ee
which together with (\ref{kv58}) gives (\ref{kv05}).

Finally, it remains to prove (\ref{kv005}).
Observe that when $p<\frac{2(\ga+1)}{\ga}$ and $\ga<2\beta$, we have $p(\ga-\beta)-2<(\ga-1)$, which yields
\be\la{kv511}\ba
\left\| \frac{P-P(\ol{\n})}{2\mu+\lam} \right\|^p_{L^p}
\le C \int (\n+1)^{p(\ga-\beta)-2} (\n-\ol{\n})^2 dx \le C A^2_1.
\ea\ee
By applying (\ref{gw}) and choosing $p=2$ in (\ref{kv511}), we obtain
\be\la{kv512}\ba
\left\| \frac{G}{2\mu+\lam} \right\|^2_{L^2}
\le C A^2_1 + \left\| \frac{P-P(\ol{\n})}{2\mu+\lam} \right\|^2_{L^2}
\le C A^2_1.
\ea\ee
In addition, Cauchy's inequality gives
\be\la{kv513}\ba
\| G \|^2_{L^2} \le C R^\beta_T A^2_1 + C \| P-P(\ol{\n}) \|^2_{L^2}
\le C R^{\beta+\ga}_T A^2_1.
\ea\ee
Similar to (\ref{kv510}), in view of (\ref{kv511}), (\ref{kv512}), and (\ref{kv513}), we arrive at
\be\ba\nonumber
\| \na \mathbf{u} \|_{L^p}
& \le C \left\| \frac{G}{2\mu+\lam} \right\|_{L^p}
+ C \left\| \frac{P-P(\ol{\n})}{2\mu+\lam} \right\|_{L^p}
+ C \| \curl \mathbf{u} \|^{\frac{2}{p}}_{L^2} \| \curl \mathbf{u} \|^{1-\frac{2}{p}}_{H^1} + C A_1 \\
& \le C A_1^{\frac{2}{p}-\ep} \| G \|^{\ep}_{L^2} \| G \|^{1-\frac{2}{p}}_{H^1}
+ C \left( A^{\frac{2}{p}}_1 \| \curl \mathbf{u} \|^{1-\frac{2}{p}}_{H^1}
+ A_1 + A^{\frac{2}{p}}_1 \right) \\
& \le C R^{\frac{ (\beta+\ga) \ep}{2}}_T A_1^{\frac{2}{p}}
\left( \| G \|_{H^1} + \| \curl \mathbf{u} \|_{H^1} \right)^{1-\frac{2}{p}}
+ C \left( A_1 + A^{\frac{2}{p}}_1 \right),
\ea\ee
which along with (\ref{kv58}) implies (\ref{kv005}) and completes the proof of Lemma \ref{l5}.
\end{proof}

\begin{lemma}\la{l6}
For any $\ep \in (0,1)$,
there exists a positive constant $C$ depending only on $\ep$, $\ga$, $\mu$, 
$\beta$, $\|\n_0\|_{L^\infty}$, $\|\mathbf{u}_0\|_{H^1}$, and $K$, such that
\be\la{kv06}\ba
\sup_{0 \le t \le T} \log(e+A^2_1(t)) + \int_0^T \frac{A^2_2(t)}{e+A^2_1(t)} dt
\le C R^{1+\ep}_T.
\ea\ee
\end{lemma}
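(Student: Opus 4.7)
The plan is to carry out a Hoff-type multiplier argument: take the $L^2$ inner product of $(\ref{ns})_2$ with the material derivative $\dot{\mathbf{u}}$ and integrate over $\Omega$. After writing $\mu\Delta\mathbf{u}=\mu\nabla\div\mathbf{u}-\mu\nabla\times\curl\mathbf{u}$ and integrating by parts, the viscous contributions generate the time derivative of
\[
\tfrac12\!\int\!\bigl[(2\mu+\lambda(\rho))(\div\mathbf{u})^2+\mu|\curl\mathbf{u}|^2\bigr]dx,
\]
a quantity comparable to $A_1^2$ modulo the boundary $K$-form absorbed via Lemmas \ref{dltdc1}--\ref{dltdc2}; the left-hand side furnishes $A_2^2$. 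The right-hand side collects cubic volume terms of the form $\int(2\mu+\lambda)\div\mathbf{u}\,\nabla\mathbf{u}\!:\!(\nabla\mathbf{u})^\top dx$ and $\int\mu|\nabla\mathbf{u}|^2\div\mathbf{u}\,dx$, a material-derivative contribution $\tfrac12\!\int\lambda'(\rho)\rho_t(\div\mathbf{u})^2 dx$ that I would rewrite via $(\ref{ns})_1$ as $-\tfrac12\!\int\div(\rho\lambda'(\rho)\mathbf{u})(\div\mathbf{u})^2 dx$, and a pressure-based time derivative $\tfrac{d}{dt}\!\int P\div\mathbf{u}\,dx$ modulo commutator terms already controlled by (\ref{kv01}) and (\ref{yzgj1}).

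The boundary contributions produced by integration by parts --- namely $\mu\int_{\partial\Omega}(\curl\mathbf{u}\times n)\cdot\dot{\mathbf{u}}\,dS$ and $\int_{\partial\Omega}(2\mu+\lambda)\div\mathbf{u}\,(\dot{\mathbf{u}}\cdot n)\,dS$ --- require the two identities in (\ref{bjds}). The slip condition (\ref{i3}) reduces the first to $-\mu\int_{\partial\Omega}(K\mathbf{u})\cdot\dot{\mathbf{u}}\,dS$; writing $\mathbf{u}=\mathbf{u}^\perp\times n$ and splitting $\dot{\mathbf{u}}=\mathbf{u}_t+(\mathbf{u}\cdot\nabla)\mathbf{u}$ decomposes it into the time derivative of $\tfrac\mu2\!\int_{\partial\Omega}\mathbf{u}\cdot K\cdot\mathbf{u}\,dS$ plus an error handled by the trace theorem together with Lemma \ref{dltdc1}. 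The second boundary term is tamed using $\dot{\mathbf{u}}\cdot n=-(\mathbf{u}\cdot\nabla)n\cdot\mathbf{u}$, which converts it into a tangential integral quadratic in $\mathbf{u}$ estimable by the $L^2$-trace of $(2\mu+\lambda)\div\mathbf{u}$ combined with Lemma \ref{ewgn}.

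The decisive step is controlling the cubic volume errors. I would apply Hölder's inequality with $\|\nabla\mathbf{u}\|_{L^3}$ bounded by Lemma \ref{l5} (taking $p=3$ and $\ep$ small), together with the pointwise bound $(2\mu+\lambda(\rho))\le CR_T^\beta$ and the $L^p$ bound on $P(\rho)$ from Lemma \ref{l3}. Whenever possible, I would employ the decomposition $\div\mathbf{u}=(G+P-P(\overline{\rho}))/(2\mu+\lambda)$ coming from (\ref{gw}) so that the factor $R_T^\beta$ is absorbed through the $R_T$-independent quantity $\|G/(2\mu+\lambda)\|_{L^2}$ (available via (\ref{kv512})) rather than through $\|G\|_{L^2}$. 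After Young's inequality is used to absorb $\tfrac12 A_2^2$ on the left, this produces a differential inequality
\[
\tfrac{d}{dt}(e+A_1^2)+A_2^2\le C\,g(t)\,(e+A_1^2),\qquad \int_0^T g(t)\,dt\le CR_T^{1+\ep},
\]
whence dividing by $e+A_1^2$ and integrating in $t$ yields (\ref{kv06}).

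The principal obstacle is attaining the sharp $R_T^{1+\ep}$ exponent rather than a cruder power like $R_T^\beta$. This compels a systematic splitting of $\nabla\mathbf{u}$ into its $G$-part and its $\curl$-part and the use of the refined estimate (\ref{kv005}) whenever the Lebesgue exponent falls below $2(\gamma+1)/\gamma$; the hypothesis $\beta>\tfrac{4}{3}$ enters implicitly here via its role in furnishing the time-uniform $L^p$ density bounds (\ref{yzgj1}) on which the closure ultimately depends.
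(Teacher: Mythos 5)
Your overall strategy---a Hoff-type multiplier argument with $\dot{\mathbf{u}}$, extraction of an energy functional comparable to $A_1^2$, a source $A_2^2$, and a Gr\"onwall-in-log closure after dividing by $e+A_1^2$---matches the paper's in spirit, but there is a concrete quantitative gap in the plan for the cubic error terms. You propose to close them via H\"older with $\|\nabla\mathbf{u}\|_{L^3}$, taking $p=3$ in Lemma \ref{l5}. This does not deliver $R_T^{1+\ep}$. The cubic errors always carry one factor that behaves like $G$ (or $(2\mu+\lambda)\div\mathbf{u}=G+P-P(\overline\rho)$), and from (\ref{kv65}) one has $\|G\|_{L^p}\le CR_T^{\frac12-\frac1p+\frac{\beta+\gamma}{p}}A_1^{2/p}(A_1+A_2)^{1-2/p}$; at $p=3$ the $R_T$-power is $\frac16+\frac{\beta+\gamma}{3}$, which after Young's inequality squares to $R_T^{\frac13+\frac{2(\beta+\gamma)}{3}}$---much larger than $R_T^{1+\ep}$ unless $\beta+\gamma$ is tiny. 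Your instinct to absorb $R_T^{\beta}$ through $\|G/(2\mu+\lambda)\|_{L^2}$ (available by (\ref{kv512})) is exactly the right one, but carrying it out requires letting $p$ be \emph{large}: in the paper's estimate (\ref{kv67}) the choice $p>4+(\beta+\gamma)/\ep$ is made so that the bad factor $R_T^{(\beta+\gamma)/p}$ is $O(R_T^\ep)$ and the interpolation weight on the $R_T$-heavy quantity $\|G\|_{L^2}$ (as opposed to $\|G/(2\mu+\lambda)\|_{L^2}$) is $O(\ep)$. Replacing $p=3$ by a large $p$ and then redoing the interpolation in your cubic-term estimates is not a cosmetic change; it is the mechanism that produces the exponent $1+\ep$.

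Two further remarks. First, rather than differentiating $\tfrac12\int[(2\mu+\lambda)(\div\mathbf{u})^2+\mu|\curl\mathbf{u}|^2]dx$ (which forces you to track $\lambda_t$, the source of the extra cubic terms you mention), the paper differentiates $A_3^2=\int\bigl(\tfrac{G^2}{2\mu+\lambda}+\mu|\curl\mathbf{u}|^2\bigr)dx+\mu\int_{\partial\Omega}\mathbf{u}\cdot K\cdot\mathbf{u}\,dS$, obtained by inserting the material-derivative commutation identities (\ref{kv61})--(\ref{kv62}) into $\int G\,\div\dot{\mathbf{u}}\,dx$. The weight $1/(2\mu+\lambda)$ there neutralizes the $R_T^\beta$ growth automatically and eliminates the $\lambda_t$-terms from the left-hand side; the equivalence $e+A_3^2\sim e+A_1^2$ is then recorded in (\ref{kv617}) using (\ref{kv616}). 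You can make your version work, but it is more delicate bookkeeping for the same result. Second, your rewriting $\tfrac12\int\lambda'(\rho)\rho_t(\div\mathbf{u})^2\,dx=-\tfrac12\int\div(\rho\lambda'(\rho)\mathbf{u})(\div\mathbf{u})^2\,dx$ is not an identity: $\lambda'(\rho)\div(\rho\mathbf{u})$ and $\div(\rho\lambda'(\rho)\mathbf{u})$ differ by $(\,\rho\lambda''(\rho)\,)\,\mathbf{u}\cdot\nabla\rho$, which vanishes only when $\beta=1$. The correct route is to split $\lambda_t=\lambda'(\rho)(\rho\div\mathbf{u}+\mathbf{u}\cdot\nabla\rho)$ and integrate the $\mathbf{u}\cdot\nabla\lambda$ piece by parts directly; the commutation-identity formulation in the paper sidesteps this slip entirely.
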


\begin{proof}
First, direct calculations yield
\be\la{kv61}\ba 
\div \dot{\mathbf{u}}
= \frac{D}{Dt} \left( \frac{G}{2\mu + \lam} \right)
+ \frac{D}{Dt} \left( \frac{P-P(\ol{\n})}{2\mu + \lam} \right) + \mathbf{g}_1,
\ea\ee
and
\be\la{kv62}\ba
\na \times \dot{\mathbf{u}}= \frac{D}{Dt} \curl \mathbf{u} + \mathbf{g}_2,
\ea\ee
where $\mathbf{g}_1$ and $\mathbf{g}_2$ satisfy $|\mathbf{g}_1|+|\mathbf{g}_2| \le C |\na \mathbf{u}|^2$.

Multiplying (\ref{kv54}) by $2 \dot{\mathbf{u}}$ and integrating
the resulting equality over $\OM$,
by (\ref{kv61}) and (\ref{kv62}), we derive
\be\la{kv63}\ba
& \frac{d}{dt} \int \left(\mu |\curl \mathbf{u}|^2 + \frac{G^2}{2\mu + \lam}\right)dx
+ 2 A^2_2 \\
& = \mu \int | \curl \mathbf{u} |^2 \div \mathbf{u} dx
-2 \mu \int \curl \mathbf{u} \cdot \mathbf{g}_2 dx
- 2 \int G \cdot \mathbf{g}_1 dx \\
&\quad - \int \frac{ (\beta-1)\lam - 2\mu }{(2\mu + \lam)^2} G^2\div \mathbf{u} dx
-2\beta \int \frac{ \lam (P-P(\ol{\n})) }{ (2\mu +\lam)^2 } G \div \mathbf{u} dx
+ 2\ga \int \frac{P}{2\mu +\lam} G \div \mathbf{u} dx \\
& \quad + 2 \int_{\p\OM} G \mathbf{u} \cdot \na \mathbf{u} \cdot n ds
-2\mu \int_{\p \OM} \dot{\mathbf{u}} \cdot K \cdot \mathbf{u} ds
= \sum_{i=1}^8I_i.
\ea\ee

We now estimate each $I_i$ as follows:

First, H\"older's inequality gives
\be\la{kv64}\ba
|I_1+I_2+I_3| & \le C \int \left( |G| + |\curl \mathbf{u}| \right) |\na \mathbf{u}|^2 dx \\
& \le C \left( \| G \|_{L^p} + \| \curl \mathbf{u} \|_{L^p} \right)
\| \na \mathbf{u} \|^2_{L^{\frac{2p}{p-1}}}.
\ea\ee
Combining Lemma \ref{ewgn} with (\ref{kv58}) and (\ref{kv513}) leads to
\be\la{kv65}\ba
\| G \|_{L^p} + \| \curl \mathbf{u} \|_{L^p}
& \le \left( \| G \|_{L^2} + \| \curl \mathbf{u} \|_{L^2} \right)^{\frac{2}{p}}
\left( \| G \|_{H^1} + \| \curl \mathbf{u} \|_{H^1} \right)^{1-\frac{2}{p}} \\
& \le C R^{\frac{1}{2}-\frac{1}{p}+\frac{\beta+\ga}{p}}_T A_1^{\frac{2}{p}}
(A_1+A_2)^{1-\frac{2}{p}}.
\ea\ee
On the other hand, we deduce from (\ref{kv05}) and H\"older's inequality that
\be\la{kv66}\ba
\| \na \mathbf{u} \|^2_{L^{\frac{2p}{p-1}}}
& \le \| \na \mathbf{u} \|^{\frac{2(p-3)}{p-2}}_{L^2}
\| \na \mathbf{u} \|^{\frac{2}{p-2}}_{L^p} \\
& \le C R^{\frac{1}{p}+\ep}_T A_1^{\frac{2(p-3)}{p-2}}
\left( (1+A_1)^{\frac{2}{p}} (1+A_1+A_2)^{1-\frac{2}{p}} \right)^{\frac{2}{p-2}}.
\ea\ee

Putting (\ref{kv65}) and (\ref{kv66}) into (\ref{kv64}), applying Young's inequality
and letting $p>4+(\beta+\ga)/\ep$, we arrive at
\be\la{kv67}\ba
|I_1+I_2+I_3| & \le C R^{\frac{1}{2}+\ep}_T
A_1^{\frac{2}{p} + \frac{2(p-3)}{p-2}} (A_1+A_2)^{1-\frac{2}{p}}
\left( (1+A_1)^{\frac{2}{p}} (1+A_1+A_2)^{1-\frac{2}{p}} \right)^{\frac{2}{p-2}}\\
& \le C R^{\frac{1}{2}+\ep}_T (A_1+A^2_1) (A_1+A^2_1+A_2) \\
& \le \frac{1}{8} A^2_2 + C R^{1+\ep}_T (1+A^2_1) A^2_1.
\ea\ee
In addition, by virtue of (\ref{kv58}), (\ref{kv66}), (\ref{kv67}), and Young's inequality, it holds that
\be\la{kv68}\ba
|I_4+I_5+I_6|
& \le C \int \frac{ G^2 |\div \mathbf{u}| }{2\mu+\lam} dx
+ C \int \frac{ P + P(\ol{\n}) }{ 2\mu +\lam } |G| |\div \mathbf{u}| dx \\
& \le C \int |G| (\div \mathbf{u})^2 dx
+ C \int \frac{ P+P(\ol{\n}) }{ 2\mu +\lam } |G| |\div \mathbf{u}| dx \\
& \le C \| G \|_{L^p} \| \na \mathbf{u} \|^2_{L^{\frac{2p}{p-1}}}
+ C \| G \|_{L^4} \| \div \mathbf{u} \|_{L^2} \\
& \le C R^{\frac{1}{2}+\ep}_T (A_1+A^2_1) (A_1+A^2_1+A_2)
+ C A_1 \left( R^{1/2}_T A_2 + A_1 \right) \\
& \le \frac{1}{8} A^2_2 + C R^{1+\ep}_T (1+A^2_1) A^2_1.
\ea\ee
For $I_7$, it follows from (\ref{bjds}), (\ref{kv58}), and Young's inequality that
\be\la{kv610}\ba
|I_7| = 2 \left| \int_{\p\OM} G \mathbf{u} \cdot \na n \cdot \mathbf{u} ds \right|
& \le C \| G \|_{H^1} \| \na \mathbf{u} \|^2_{L^2} \\
& \le C \left( R^{1/2}_T A_2 + A_1 \right) A^2_1 \\
& \le \frac{1}{8} A^2_2 + C R_T A^4_1 + C A^2_1.
\ea\ee
Moreover, by using (\ref{bjds}), (\ref{kv05}), and Poincar\'e's inequality, we derive
\begin{align}\label{kv612}
I_8 & = -2\mu \int_{\p \OM} \dot{\mathbf{u}} \cdot K \cdot \mathbf{u} ds \nonumber \\
& = -\mu \frac{d}{dt} \int_{\p \OM} \mathbf{u} \cdot K \cdot \mathbf{u} ds
-2\mu \int_{\p \OM} \mathbf{u} \cdot \na \mathbf{u} \cdot K \cdot \mathbf{u} ds \nonumber \\
& = -\mu \frac{d}{dt} \int_{\p \OM} \mathbf{u} \cdot K \cdot \mathbf{u} ds
-2\mu \int_{\p \OM} \mathbf{u}^\bot \times n \cdot \nabla \mathbf{u}^i (K^i \cdot \mathbf{u} ) ds \nonumber \\
& = - \mu \frac{d}{dt} \int_{\p \OM} \mathbf{u} \cdot K \cdot \mathbf{u} ds
-2\mu \int_{\p \OM} n \cdot ( \na \mathbf{u}^i \times \mathbf{u}^\bot ) (K^i \cdot \mathbf{u} ) ds \nonumber \\
& = - \mu \frac{d}{dt} \int_{\p \OM} \mathbf{u} \cdot K \cdot \mathbf{u} ds
-2\mu \int \div ( ( \na \mathbf{u}^i \times \mathbf{u}^\bot ) (K^i \cdot \mathbf{u} ) )dx \nonumber \\
& = - \mu \frac{d}{dt} \int_{\p \OM} \mathbf{u} \cdot K \cdot \mathbf{u} ds
+2\mu \int ( \na \mathbf{u}^i \cdot \na \times \mathbf{u}^\bot ) (K^i \cdot \mathbf{u} ) dx \nonumber \\
& \quad -2\mu \int  \na (K^i \cdot \mathbf{u} ) \cdot ( \na \mathbf{u}^i \times \mathbf{u}^\bot ) dx \nonumber \\
& \le - \mu \frac{d}{dt} \int_{\p \OM} \mathbf{u} \cdot K \cdot \mathbf{u} ds
+ C \int |\na \mathbf{u}|^2 |\mathbf{u}| + |\na \mathbf{u}| |\mathbf{u}|^2 dx \nonumber \\
& \le - \mu \frac{d}{dt} \int_{\p \OM} \mathbf{u} \cdot K \cdot \mathbf{u} ds
+ C \| \na \mathbf{u} \|_{L^4} \| \na \mathbf{u}\|^2_{L^2}
+ C \| \na \mathbf{u} \|^3_{L^2} \nonumber \\
& \le - \mu \frac{d}{dt} \int_{\p \OM} \mathbf{u} \cdot K \cdot \mathbf{u} ds
+ C R^{\frac{1}{4}+\ep}_T A^2_1 (1+A_1+A_2) + C A^3_1 \nonumber \\
& \le - \mu \frac{d}{dt} \int_{\p \OM} \mathbf{u} \cdot K \cdot \mathbf{u} ds
+ \frac{1}{8} A^2_2 + C R^{1+\ep}_T (1+A^2_1) A^2_1,
\end{align}
where the symbol $K^i$ denotes the $i$-th row of the matrix $K$ and we have used the following fact:
\be\la{kv613}\ba
\div ( \na \mathbf{u}^i \times \mathbf{u}^\bot ) = -\na \mathbf{u}^i \cdot \na \times \mathbf{u}^\bot.
\ea\ee

Substituting (\ref{kv67})--(\ref{kv612}) into (\ref{kv63}) yields
\be\la{kv614}\ba
\frac{d}{dt} A^2_3 + A^2_2 \le C R^{1+\ep}_T (1+A^2_1) A^2_1,
\ea\ee
where
\be\la{kv615}\ba
A_3^2(t) \triangleq \int \left( \frac{G^2(t)}{2\mu+\lambda}+\mu|\curl \mathbf{u}|^2(t) \right) dx
+ \mu \int_{\p \OM} \mathbf{u} \cdot K \cdot \mathbf{u} ds.
\ea\ee
In addition, we conclude from (\ref{dltdc01}) and (\ref{dltdc02}) that
\be\la{kv616}\ba
\| \na \mathbf{u} \|^2_{L^2}
\le C \left( \| \div \mathbf{u} \|^2_{L^2} + \| \curl \mathbf{u} \|^2_{L^2}
+ \int_{\p \OM} \mathbf{u} \cdot K \cdot \mathbf{u} ds \right),
\ea\ee
which together with (\ref{yzgj1}) and (\ref{kv615}) implies
\be\la{kv617}\ba
\frac{1}{C} (e+A^2_3) \le e + A^2_1 \le C(e+A^2_3).
\ea\ee
Therefore, dividing (\ref{kv614}) by $e+A^2_3$ and applying (\ref{kv617}), we arrive at
\be\la{kv618}\ba
\frac{d}{dt} \log (e+A^2_3) + \frac{A^2_2}{e+A^2_1}
\le C R^{1+\ep}_T A^2_1.
\ea\ee

Integrating (\ref{kv618}) over $(0,T)$ and using
(\ref{kv01}), (\ref{yzgj1}), (\ref{yzgj2}), and (\ref{kv617}),
we obtain (\ref{kv06}) and complete the proof of Lemma \ref{l6}.
\end{proof}

Next, we estimate the effective viscous flux $G$.
Since $G$ is axisymmetric and solves a Neumann boundary problem,
we can exploit this symmetry to reduce the three-dimensional problem to a two-dimensional one.
This reduction allows us to apply the approach in \cite{FLL}, which deals with problems in two-dimensional bounded simply connected domains, to derive the corresponding estimates.

We note that the method in \cite{FLL} relies on the boundary condition $(\mathbf{u} \cdot n)|_{\p \OM}=0$ to cancel out the singularity.
However, in our setting, the periodicity in the $x_3$-direction prevents us from
imposing boundary conditions on the top and bottom surfaces of $\OM$.
The absence of these boundary conditions obstructs direct estimates for $G$ over the entire domain $\OM$.

To overcome this difficulty, we extend $\OM$ periodically in the $x_3$-direction
to a larger domain $\OM_1$, and then establish estimates for $G$ on $\OM$ by working within $\OM_1$.
Specifically, we define
\be\ba\nonumber
\OM_1 \triangleq \{ (x_1,x_2,x_3) \in \rrr: 1<x^2_1+x^2_2<4, -2<x_3<3 \}.
\ea\ee
By the periodicity in $x_3$ and (\ref{kv55}), we obtain that for any $t\in [0,T]$, $G$ satisfies
the following elliptic equation with Neumann boundary conditions:
\be\la{evf1}\ba
\begin{cases}
\Delta G=\div \left( \rho \dot{\mathbf{u}} \right) & \mathrm{in}\, \,  \OM_1, \\
\frac {\p G}{\p n}= \left( \rho \dot{\mathbf{u}} - \mu \na \times (K \mathbf{u})^\perp \right) \cdot n &\mathrm{on}\, \,  \p \OM_1.
\end{cases}
\ea\ee

Exploiting the axisymmetry of the problem, we transform the above equation into a two-dimensional form.
Let $\tilde{\Delta} \triangleq \p_{rr}+ \p_{zz}$ and $\tilde{\na} \triangleq (\p_{r}, \p_{z})$.
Direct calculations yield
\be\la{evf2}\ba
\begin{cases}
\tilde{\Delta} G=\div \left( \rho \dot{\mathbf{u}} \right) - \frac{1}{r}\p_r G & \mathrm{in}\, \,  D_1, \\
\tilde{\na} G \cdot \tilde{n} = \left( \rho \dot{\mathbf{u}} - \mu \na \times (K \mathbf{u})^\perp \right) \cdot n &\mathrm{on}\, \,  \p D_1,
\end{cases}
\ea\ee
where $D_1 \triangleq \{(r,z) \in \rr :1<r<2, -2<z<3 \}$, and $\tilde{n}$ denotes the unit outer normal vector of the boundary $\p D_1$.
Note that the Green's function $N(x,y)$ for the Neumann problem (see \cite{STT})
on the two-dimensional unit disc $\mathbb{D}$ is given by
\be\ba\nonumber
N(x,y)=-\frac{1}{2\pi}\bigg(\log|x-y|+\log\left||x|y-\frac{x}{|x|}\right|\bigg).
\ea\ee

Moreover, by the Riemann mapping theorem (see \cite{SES}), there exists a conformal mapping
$\varphi=(\varphi_1, \varphi_2):\overline{D_1}\rightarrow\overline{\mathbb{D}}$.
We define the pull back Green's function $\widetilde{N}(x,y)$ on $D_1$ as follows:
\be\ba\nonumber
\widetilde{N}(x,\, y) \triangleq N\big(\varphi(x),\varphi(y)\big) \ \ \mathrm{for}\ x,y\in D_1.
\ea\ee

Before deriving the estimates for $G$, we first introduce some notation.
In the following Lemmas \ref{l7}--\ref{l9}, unless otherwise specified,
for any $\mathbf{x}=(x_1,x_2,x_3) \in \OM_1$,
let $x=(r_\mathbf{x},z_\mathbf{x}) \in D_1$ denote the corresponding
two-dimensional coordinates under the axisymmetric transformation,
where $r_\mathbf{x}=\sqrt{x^2_1+x^2_2}$ and $z_\mathbf{x}=x_3$.
We also set $u_1 \triangleq u_r$, $u_2 \triangleq u_z$.

Based on the above definitions and notation, we now establish the estimates for $G$.

\begin{lemma}\label{l7}
Assume that $G\in C\big([0,T];C^1(\overline{\OM_1})\cap C^2(\OM_1)\big)$ satisfies the equation $(\ref{evf1})$.
Then for any $\mathbf{x}\in \OM$, there exists a positive constant $C$ depending only on
$\ga$, $\mu$, $\beta$, $\|\n_0\|_{L^\infty}$, $\|\mathbf{u}_0\|_{H^1}$, and $K$, such that
\be\la{kv701}\ba
-G(\mathbf{x},t)
& \le \frac{D}{Dt} \psi (\mathbf{x},t) 
+ C \left( \| \sqrt{\n} \dot{\mathbf{u}} \|_{L^2}
+ \| \na \mathbf{u} \|^2_{L^2} + \| G \|_{H^1}
+ \| \na \mathbf{u} \|_{L^4} \right) - J,
\ea\ee
where
\be\la{kv702}\ba
\psi \triangleq \int_{D_1}  \left( \p_{y_i} \widetilde{N}(x,y) \n u_i(y) \right) dy,
\ea\ee
and
\be\la{kv703}\ba
J \triangleq \int_{D_1} \left( \p_{x_i} \p_{y_j} \widetilde{N}(x,y) u_i(x)
+ \p_{y_i} \p_{y_j} \widetilde{N}(x,y) u_i(y) \right) \n u_j(y) dy.
\ea\ee
\end{lemma}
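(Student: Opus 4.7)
The plan is to derive a pointwise formula for $-G(\mathbf{x},t)$ via the Green's function of the 2D Neumann problem $(\ref{evf2})$ on $D_1$, namely $\tilde N$, the pull-back by the conformal map $\varphi$ of the standard Green's function on the unit disc, and then convert back to the 3D axisymmetric domain $\OM_1$ using the axisymmetric measure identity $d\mathbf{y}=2\pi r_\mathbf{y}\,dy$. This change of measure is exactly what produces the factors $1/r_\mathbf{y}$ appearing in $(\ref{kv702})$ and $(\ref{kv703})$ and the prefactor $1/(2\pi)$ in $\psi$.

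First, I would apply the Green's representation formula to $(\ref{evf2})$ and integrate by parts once in $\mathbf{y}$ (which is legitimate because $\mathbf{u}\cdot n=0$ on the lateral boundary of $\OM_1$ while periodicity in $x_3$ kills the top and bottom contributions) to obtain
$$-G(\mathbf{x},t)=\frac{1}{2\pi}\int_{\OM_1}\frac{\nabla_\mathbf{y}\hat N(\mathbf{x},\mathbf{y})}{r_\mathbf{y}}\cdot\rho\dot{\mathbf{u}}(\mathbf{y},t)\,d\mathbf{y}+R(\mathbf{x},t),$$
where $R$ collects three residual pieces: the Neumann boundary contribution from $-\mu\nabla\times(K\mathbf{u})^\perp$ in $(\ref{evf1})$, the lower-order volume term coming from $-\frac{1}{r}\partial_r G$ in $(\ref{evf2})$, and the term arising when $\nabla_\mathbf{y}$ hits $1/r_\mathbf{y}$. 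Using $L^p$ bounds on $\nabla_\mathbf{y}\hat N$ inherited from the smoothness of $\varphi$ on $\overline{D}_1$ together with the explicit form of the Green's function on the disc, each of these pieces is controlled by $C(\|G\|_{H^1}+\|\nabla\mathbf{u}\|_{L^4}+\|\nabla\mathbf{u}\|_{L^2}^2)$.

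Next, I would split $\rho\dot{\mathbf{u}}=(\rho\mathbf{u})_t+\text{div}(\rho\mathbf{u}\otimes\mathbf{u})$, using continuity as in the derivation just above $(\ref{yxtx1})$. The time-derivative piece yields exactly $\partial_t\psi$ from $(\ref{kv702})$. The convective piece, after a second integration by parts in $\mathbf{y}$ (again with no boundary contribution, for the same reason), produces a volume integral of $\partial_{y_i}\partial_{y_j}\hat N\cdot\rho\mathbf{u}^i(\mathbf{y})\mathbf{u}^j(\mathbf{y})/r_\mathbf{y}$, which is the second piece of $J$, up to an extra term where the $\mathbf{y}$-derivative falls on $1/r_\mathbf{y}$ and which is absorbed into $R$. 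Finally, to convert $\partial_t\psi$ into the material derivative $\frac{D}{Dt}\psi=\partial_t\psi+\mathbf{u}(\mathbf{x})\cdot\nabla_\mathbf{x}\psi$, I add and subtract $\mathbf{u}(\mathbf{x})\cdot\nabla_\mathbf{x}\psi$; the subtracted piece matches exactly the first term of $J$, namely $\int\partial_{x_i}\partial_{y_j}\hat N\,\mathbf{u}^i(\mathbf{x})\,\rho\mathbf{u}^j(\mathbf{y})/r_\mathbf{y}\,d\mathbf{y}$. Collecting everything yields $(\ref{kv701})$, with all residuals absorbed into $C(\|\sqrt\rho\dot{\mathbf{u}}\|_{L^2}+\|\nabla\mathbf{u}\|_{L^2}^2+\|G\|_{H^1}+\|\nabla\mathbf{u}\|_{L^4})$ via H\"older's inequality and Lemma \ref{ewgn}.

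The main obstacle is the rigorous justification of the integration by parts despite the logarithmic singularity of $\hat N$ on the diagonal $\mathbf{x}=\mathbf{y}$: one excises a small ball around $\mathbf{x}$, performs the integration by parts on the complement, and then passes to the limit using that $\rho\dot{\mathbf{u}}\in L^2$ (controlled by $A_2$) together with the local weak-$L^2$ bound on $\nabla_\mathbf{y}\hat N$ and local weak-$L^1$ bound on $\nabla_\mathbf{y}^2\hat N$ inherited from the explicit Green's function $N$ on the disc and the $C^\infty$ regularity of $\varphi$ on $\overline{D}_1$. A secondary technicality is that the Neumann boundary contribution carrying $\nabla\times(K\mathbf{u})^\perp$ should be recast as a volume integral using $(\ref{bjds})$ (mimicking the manipulation $(\ref{yxtx2})$ already used for $F_3$), so that it is estimated by the $L^4$-norm of $\nabla\mathbf{u}$ rather than by a trace seminorm, which would be too weak for the subsequent density bound. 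Once these two points are handled, the remainder is routine calculus with $\hat N$.
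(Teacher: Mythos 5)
Your proposal follows essentially the same route as the paper: Green's representation of $-G$ via the conformal pull-back Green's function on $D_1$, conversion to the axisymmetric measure on $\OM_1$ (yielding the $1/r_\mathbf{y}$ weights and the $1/(2\pi)$ prefactor), integration by parts to move derivatives off $\rho\dot{\mathbf{u}}$, the decomposition $\rho\dot{\mathbf{u}}=(\rho\mathbf{u})_t+\mathrm{div}(\rho\mathbf{u}\otimes\mathbf{u})$ producing $\partial_t\psi$ and the second piece of $J$, and finally trading $\partial_t\psi$ for $\frac{D}{Dt}\psi$ with $\mathbf{u}(\mathbf{x})\cdot\nabla_{\mathbf{x}}\psi$ absorbed into the first piece of $J$. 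The only minor imprecision is in the handling of the Neumann boundary term with $\nabla\times(K\mathbf{u})^\perp$: the paper's step (\ref{kv76}) does not invoke (\ref{bjds}) or mimic (\ref{yxtx2}); it simply writes $\int_{\partial\OM_1}\frac{1}{r_\mathbf{y}}\hat N\,\nabla\times(K\mathbf{u})^\perp\cdot n\,dS_\mathbf{y}=\int_{\OM_1}\mathrm{div}\big(\frac{1}{r_\mathbf{y}}\hat N\,\nabla\times(K\mathbf{u})^\perp\big)d\mathbf{y}$ by the divergence theorem and $\mathrm{div}(\nabla\times\cdot)=0$, and then uses the pointwise bounds $|\hat N|+|\nabla_\mathbf{y}\hat N|\lesssim 1+|x-y|^{-1}$ together with H\"older to land on $C\|\nabla\mathbf{u}\|_{L^4}$; the rest of your outline matches the paper's computation.
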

\begin{proof}
First, since $G$ satisfies equation (\ref{evf2}), it follows from \cite[Lemma 3.7]{FLL} that for $x=(r_\mathbf{x},z_\mathbf{x}) \in D \subset D_1$,
\be\la{kv71}\ba
-G(x,t)
=& -\int_{D_1} \widetilde{N}(x,\, y)  \left( \div \left( \rho \dot{\mathbf{u}} \right) - \frac{1}{r_y}\p_r G \right) \, dy
- \int_{\partial D_1} \frac{\partial \widetilde{N}}{\partial n}(x,\, y) G(y) dS_y \\
&+ \int_{\partial D_1} \widetilde{N}(x,y) \left( \rho \dot{\mathbf{u}} - \mu \na \times (K \mathbf{u})^\perp \right) \cdot n dS_y.
\ea\ee
Next, we estimate each term on the right-hand side of (\ref{kv71}).

From \cite[Lemma 3.6]{FLL}, we conclude that for any $x\in D_1$, $y\in \p D_1$,
\be\ba\la{kv72}
\frac{\partial \widetilde{N}}{\partial n}(x,\, y)=-\frac{1}{2 \pi} |\na \varphi_1 (y)|.
\ea\ee
Moreover, for any $x,y \in D_1$, direct calculation shows that
\be\ba\nonumber
|\varphi(x) - \varphi(y)| \le 4 \left| |\varphi(x)| \varphi(y) - \frac{\varphi(x)}{ |\varphi(x)| }\right|,
\ea\ee
which implies
\be\la{kv74}\ba
|\widetilde{N}(x,y)| \le C \left( 1 + | \log |x-y| | \right).
\ea\ee
By applying (\ref{kv72}), (\ref{kv74}) and H\"older's inequality, we obtain
\be\la{kv75}\ba
& \int_{D_1} \frac{1}{r_y} \widetilde{N}(x,\, y) \p_r G \, dy
- \int_{\partial D_1} \frac{\partial \widetilde{N}}{\partial n}(x,\, y) G(y) dS_y \\
& \le C \| G \|_{H^1(D_1)} \le C \| G \|_{H^1(\OM)}.
\ea\ee

Before proceeding to the next estimate, we first show that for any $x \in D$ and $y \in D_1$, $|\varphi(x)-\varphi(y)|$ is equivalent to $|x-y|$.

Note that the domain $D_1$ has corners, so that the derivative of the conformal map $\varphi$ tends to zero near these corners.
For this reason, we partition $D_1$ into a domain near the corners and another domain bounded away from them.
Specifically, we define:
\be\nonumber\ba
D^{*}_1 \triangleq \{ (r,z) \in \rr : 1<r<4, -1<z<2 \}, \quad D^{+}_1 \triangleq D_1 \setminus D^{*}_1.
\ea\ee

On the one hand, for any $x \in D$ and $y \in D^{*}_1$,
it follows from \cite{SES} that there exists a constant $c_0>0$, depending only on $D_1$, such that
\be\la{qyfj2}\ba
\frac{1}{c_0} |x-y| \le |\varphi(x) - \varphi(y)| \le c_0 |x-y|.
\ea\ee

On the other hand, for any $x \in D$ and $y \in D^{+}_1$, we have $|x-y| \ge 1$.
By the continuity of $\varphi$,
there exists a constant $c_1 \in (0,1)$, depending only on $D_1$, such that $|\varphi(x) - \varphi(y)| \ge c_1$.
Combining this with (\ref{qyfj2}) implies the existence of a constant $c_2>0$ such that
\be\la{qyfj3}\ba
\frac{1}{c_2} |x-y| \le |\varphi(x) - \varphi(y)| \le c_2 |x-y|, \  \text{ for any } x \in D, y \in D_1.
\ea\ee

Then, for any $\mathbf{y} =(y_1,y_2,y_3) \in \ol{\OM_1}$, we set $\hat{y}=\left(r_\mathbf{y},y_3\right) \in \ol{D_1}$ with $r_\mathbf{y}=\sqrt{y_1^2+y_2^2}$,
and define $\hat{N}(\mathbf{x},\mathbf{y}) \triangleq \widetilde{N}(x,\hat{y})$.
Integrating by parts and applying (\ref{kv74}) and Poincar\'e's inequality, we derive
\be\la{kv76}\ba
&  \mu \left| \int_{\partial D_1}  \left( \widetilde{N}(x,y) \na \times (K \mathbf{u})^\perp \right) \cdot n dS_y \right| \\
& = \frac{\mu}{2 \pi} \left| \int_{\partial \OM_1} \frac{1}{r_\mathbf{y}}
\left( \hat{N}(\mathbf{x},\mathbf{y}) \na \times (K \mathbf{u})^\perp \right) \cdot n dS_\mathbf{y} \right| \\
& = \mu \left| \int_{\OM_1} \div
\left( \frac{1}{r_\mathbf{y}} \hat{N}(\mathbf{x},\mathbf{y}) \na \times (K \mathbf{u})^\perp \right) d\mathbf{y} \right| \\
& \le C \int_{\OM_1} \left( |\hat{N}(\mathbf{x},\mathbf{y})| + |\na_{\mathbf{y}} \hat{N}(\mathbf{x},\mathbf{y})| \right)
\left( |\mathbf{u}| + |\na \mathbf{u}| \right) d\mathbf{y} \\
& \le C \int_{\OM_1} \left( 1 + |x-\hat{y}|^{-1} \right)
\left( |\mathbf{u}| + |\na \mathbf{u}| \right) d\mathbf{y} \\
& \le C \| \na \mathbf{u} \|_{L^4},
\ea\ee
where we have used the estimates
$|\na_{\mathbf{y}} \hat{N}(\mathbf{x},\mathbf{y})| \le C |\varphi(x)-\varphi(\hat{y})|^{-1} \le C |x-\hat{y}|^{-1}$, due to (\ref{qyfj3}).

By direct computation, we have
\be\ba\la{kvx77}
\div (\n \mathbf{u} \otimes \mathbf{u})
& = \left( \frac{\p (\n u_r^2)}{\p r} + \frac{\p (\n u_r u_z)}{\p z} + \frac{\n (u_r^2-u_\theta^2)}{r} \right) \mathbf{e}_r \\
& \quad + \left( \frac{\p (\n u_r u_\theta)}{\p r} + \frac{\p (\n u_\theta u_z)}{\p z} + \frac{2 \n u_r u_\theta}{r} \right) \mathbf{e}_\theta \\
& \quad + \left( \frac{\p (\n u_r u_z)}{\p r} + \frac{\p (\n u^2_z)}{\p z} + \frac{\n u_r u_z}{r} \right) \mathbf{e}_z \\
& \triangleq H_r \mathbf{e}_r + H_\theta \mathbf{e}_\theta + H_z \mathbf{e}_z,
\ea\ee
which together with the mass equation $(\ref{ns})_1$ implies
\be\ba\la{kvx78}
\n \dot{\mathbf{u}} = \left( (\n u_r)_t + H_r \right) \mathbf{e}_r
+ \left( (\n u_\theta)_t + H_\theta \right) \mathbf{e}_\theta
+ \left( (\n u_z)_t + H_z \right) \mathbf{e}_z.
\ea\ee
Thus, direct calculation gives
\be\ba\la{kvx79}
\div(\n \dot{\mathbf{u}})
= \frac{1}{r} \frac{\p}{\p r} \big( r (\n u_r)_t + r H_r \big)
+ \frac{\p}{\p z} \big( (\n u_z)_t + H_z \big).
\ea\ee
Integrating by parts and using (\ref{yzgj1}), (\ref{kvx78}), (\ref{kvx79}), and H\"older's inequality, we obtain
\be\la{kvx710}\ba
& - \int_{D_1} \widetilde{N}(x,y)  \div \left( \rho \dot{\mathbf{u}} \right) dy
+ \int_{\partial D_1} \widetilde{N}(x,y) (\rho \dot{\mathbf{u}} \cdot n) dS_y \\
& = - \int_{D_1} \widetilde{N}(x,y) \left( \frac{1}{r} \frac{\p}{\p r} \big( r (\n u_r)_t + r H_r \big) + \frac{\p}{\p z} \big( (\n u_z)_t + H_z \big) \right) dy \\
& \quad + \int_{\partial D_1} \widetilde{N}(x,y) \left( (\n u_r)_t + H_r \right) dS_y \\
& = \int_{D_1}  \left( \p_{y_1} \widetilde{N}(x,y) \left( (\n u_r)_t + H_r \right)
+ \p_{y_2} \widetilde{N}(x,y) \left( (\n u_z)_t + H_z \right) \right) dy \\
& \quad - \int_{D_1} \left( \frac{1}{r} \widetilde{N}(x,y) \left( (\n u_r)_t + H_r \right) \right) dy \\
& \le C \| \sqrt{\n} \dot{\mathbf{u}} \|_{L^2}
+ \int_{D_1}  \left( \p_{y_1} \widetilde{N}(x,y) \left( (\n u_r)_t + H_r \right)
+ \p_{y_2} \widetilde{N}(x,y) \left( (\n u_z)_t + H_z \right) \right) dy.
\ea\ee

We now estimate the second term on the last line of (\ref{kvx710}).
From the definition of the material derivative and the axisymmetry of the solution, we deduce that
\be\ba\la{kvx711}
& \int_{D_1}  \left( \p_{y_1} \widetilde{N}(x,y) (\n u_r)_t
+ \p_{y_2} \widetilde{N}(x,y) (\n u_z)_t \right) dy \\
& = \frac{d}{dt} \left( \int_{D_1} \left( \p_{y_1} \widetilde{N}(x,y) \n u_r
+ \p_{y_2} \widetilde{N}(x,y) \n u_z \right) dy \right) \\
& = \frac{D}{Dt} \left( \int_{D_1}  \left( \p_{y_1} \widetilde{N}(x,y) \n u_r
+ \p_{y_2} \widetilde{N}(x,y) \n u_z \right) dy \right) \\
& \quad - \int_{D_1} \left( \p_{x_1} \p_{y_1} \widetilde{N}(x,y) \n u_r(y) u_r(x) + \p_{x_2} \p_{y_1} \widetilde{N}(x,y) \n u_r(y) u_z(x) \right) dy \\
& \quad - \int_{D_1} \left( \p_{x_1} \p_{y_2} \widetilde{N}(x,y) \n u_z(y) u_r(x) + \p_{x_2} \p_{y_2} \widetilde{N}(x,y) \n u_z(y) u_z(x) \right) dy.
\ea\ee
Moreover, integration by parts combined with (\ref{yzgj1}) and Poincar\'e's inequality yields
\be\ba\la{kvx712}
& \int_{D_1}  \left( \p_{y_1} \widetilde{N}(x,y) H_r
+ \p_{y_2} \widetilde{N}(x,y) H_z \right) dy \\
& = \int_{D_1} \left( \p_{y_1} \widetilde{N}(x,y) \left( \frac{\p (\n u_r^2)}{\p r} + \frac{\p (\n u_r u_z)}{\p z} + \frac{\n (u_r^2-u_\theta^2)}{r} \right) \right) dy \\
& \quad + \int_{D_1} \left( \p_{y_2} \widetilde{N}(x,y) \left( \frac{\p (\n u_r u_z)}{\p r} + \frac{\p (\n u^2_z)}{\p z} + \frac{\n u_r u_z}{r} \right) \right) dy \\
& \le - \int_{D_1} \left( \p_{y_1} \p_{y_1} \widetilde{N}(x,y) \n u_r u_r(y) + \p_{y_2} \p_{y_1} \widetilde{N}(x,y) \n u_r u_z(y) \right) dy \\
& \quad - \int_{D_1} \left( \p_{y_1} \p_{y_2} \widetilde{N}(x,y) \n u_z u_r(y) + \p_{y_2} \p_{y_2} \widetilde{N}(x,y) \n u_z u_z(y) \right) dy
+ C \| \na \mathbf{u} \|^2_{L^2}.
\ea\ee
Substituting (\ref{kvx711}) and (\ref{kvx712}) into (\ref{kvx710}) leads to
\be\la{kvx713}\ba
& - \int_{D_1} \widetilde{N}(x,y)  \div \left( \rho \dot{\mathbf{u}} \right) dy
+ \int_{\partial D_1} \widetilde{N}(x,y) (\rho \dot{\mathbf{u}} \cdot n) dS_y \\
& \le C \| \sqrt{\n} \dot{\mathbf{u}} \|_{L^2} + C \| \na \mathbf{u} \|^2_{L^2}
+ \frac{D}{Dt} \left( \int_{D_1}  \left( \p_{y_i} \widetilde{N}(x,y) \n u_i(y) \right) dy \right) - J,
\ea\ee
where
\be\ba\nonumber
J \triangleq \int_{D_1} \left( \p_{x_i} \p_{y_j} \widetilde{N}(x,y) u_i(x)
+ \p_{y_i} \p_{y_j} \widetilde{N}(x,y) u_i(y) \right) \n u_j(y) dy.
\ea\ee
Combining (\ref{kv71}), (\ref{kv75}), (\ref{kv76}), and (\ref{kvx713}) implies
\be\ba\nonumber
-G(\mathbf{x},t) = - G(x,t)
& \le \frac{D}{Dt} \left( \int_{D_1}  \left( \p_{y_i} \widetilde{N}(x,y) \n u_i(y) \right) dy \right) + C \| G \|_{H^1} \\
& \quad + C \| \sqrt{\n} \dot{\mathbf{u}} \|_{L^2} + C \| \na \mathbf{u} \|^2_{L^2}
+ C \| \na \mathbf{u} \|_{L^4} - J,
\ea\ee
which gives (\ref{kv701}) and completes the proof of Lemma \ref{l7}.
\end{proof}

\begin{lemma}\la{ll7}
For $J$ as in (\ref{kv703}), there exists a positive constant $C$ depending only on $\ga$, $\mu$, $\beta$, $\|\n_0\|_{L^\infty}$, $\|\mathbf{u}_0\|_{H^1}$, and $K$, such that for any $\mathbf{x} \in \OM$ with $\varphi(x) \ne 0$,
\be\la{kvl701}\ba
|J| & \le C \| \na \mathbf{u} \|^2_{L^2}
+ C \sup_{x \in \ol{D_1}} \left( \sum^{2}_{i,j=1} \int_{D_1} \frac{\left| u_i(x) - u_i(y) \right|}{|x-y|^2} \n |u_j| (y) dy \right).
\ea\ee
\end{lemma}
\begin{proof}
First, we rewrite $J$ as
\be\la{kv79}\ba
J& = \int_{D_1} \p_{x_i} \p_{y_j} \widetilde{N}(x,y)
\left( u_i(x) - u_i(y) \right) \n u_j(y) dy \\
& \quad - \int_{D_1} \Lambda_{i,j}(\varphi (y), \varphi (x)) \n u_i u_j(y) dy \\
& \quad - \int_{D_1} \Lambda _{i,j}(\varphi (y), w(x)) \n u_i u_j(y) dy
\triangleq \sum _{l=1}^3 J_l,
\ea\ee
with
\be\ba\nonumber
\Lambda_{i, j} (\varphi (y), v(x)) \triangleq (\p_{x_i} \p_{y_j}
+\p_{y_i} \p_{y_j}) \log \left| \varphi (y)-v(x) \right|,
w(x) \triangleq \frac{\varphi (x)}{\vert \varphi (x)\vert ^2}.
\ea\ee
For $J_1$, direct calculation shows that
\be\la{kv711}\ba
|J_1| \le C \sum^{2}_{i,j=1} \int_{D_1} \frac{\left| u_i(x) - u_i(y) \right|}{|x-y|^2} \n |u_j| (y) dy.
\ea\ee
Next, to estimate $J_2$ and $J_3$, for $v(x) \in \{\varphi (x), w(x)\}$, we have
\be\la{kv712}\ba
& \Lambda_{i, j}(\varphi (y), v(x)) \\
& \quad = \frac{ (\varphi _k(y)-v_k(x))\partial _j\partial _i\varphi _k(y)}{\vert v(x) -\varphi (y)\vert ^2}+ \frac{\partial _j\varphi _k(y)(\partial _i\varphi _k(y)-\partial _iv_k(x)) }{\vert v(x) -\varphi (y)\vert ^2} \\
& \qquad +2\frac{(v_k(x)-\varphi _k(y))(\partial _iv_k(x)-\partial _i \varphi _k(y))( \varphi _s(y)-v_s(x))\partial _j\varphi _s(y)}{\vert v(x)-\varphi (y)\vert ^4}.
\ea\ee
Consequently, by virtue of (\ref{qyfj3}) and (\ref{kv712}), it holds that
\be\ba\nonumber
\vert \Lambda _{i, j}(\varphi (y), \varphi (x))\vert \le C \vert x-y\vert ^{-1},
\ea\ee
which together with Poincar\'e's inequality implies
\be\la{kv714}\ba
|J_2| \le C \int_{D_1} \frac{\n ( u_r^2 + u_z^2 )}{|x-y|} dy
\le C \| \na \mathbf{u} \|^2_{L^2}.
\ea\ee
For $J_3$, we deduce from (\ref{kv712}) that
\be\la{kv715}\ba
& \left| \Lambda _{i, j}(\varphi (y), w(x)) u_i(y)\right| \\
& \quad \le \frac{C \left( |u_r(y)| + |u_z(y)| \right) }{ \vert \varphi (y)-w(x)\vert }
+ C\sum _{k=1}^2 \frac{ \vert (\partial _i w_k(x)-\partial _i \varphi _k(y)) u_i(y) \vert }{ \vert \varphi (y)-w(x)\vert ^2}.
\ea\ee
Moreover, for any $\varphi (x), \varphi (y)\in {\mathbb {D}}$ with $\varphi (x)\not =0$, we have
\be\la{kv716}\ba
\left| \varphi (y)-\frac{\varphi (x)}{\vert \varphi (x)\vert }\right| \leq \left| \varphi (y)-w(x)\right|,
\, \, \left| \varphi (y)-\varphi (x)\right| \leq \big \vert \varphi (y)- w(x) \big \vert,
\ea\ee
which together with (\ref{qyfj3}) gives
\be\la{kv717}\ba
\frac{C \left( |u_r(y)| + |u_z(y)| \right) }{ \vert \varphi (y)-w(x)\vert }
\le \frac{C \left( |u_r(y)| + |u_z(y)| \right) }{ |x-y| }.
\ea\ee

To estimate the second terms on the right-hand side of (\ref{kv715}),
we partition the boundary of $D_1$ into two components:
\be\ba\nonumber
\Gamma_1 & \triangleq \{ (r,z) \in \p D_1: r=1 \text{ or } r=2,-1<z<2 \},
\quad \Gamma_2 \triangleq \p D_1 \setminus  \Gamma_1.
\ea\ee

For any $x \in D$ and $y \in \Gamma_2$, we have $|x-y| \ge 1$.
It then follows from the arguments in Lemma \ref{l7} that $|\varphi(x) - \varphi(y)| \ge c_1$ for some constant $c_1 \in (0,1)$ depending only on $D_1$.

We then proceed to estimate the second terms on the right-hand side of (\ref{kv715}) by considering two distinct cases.

$Case \ 1: |\varphi(x)| \le 1-c_1.$
By the definition of $w(x)$, we derive
\be\la{kv718}\ba
\frac{ \vert \partial _i w_k(x)-\partial _i \varphi _k(y) \vert }{ \vert \varphi (y)-w(x)\vert ^2}
\le C \left| |\varphi(x)| \varphi(y) - \frac{\varphi(x)}{ |\varphi(x)| }\right|^{-2}.
\ea\ee
Note that
\be\la{kv719}\ba
\left| |\varphi(x)| \varphi(y) - \frac{\varphi(x)}{ |\varphi(x)| }\right|
\ge 1-|\varphi(x)| |\varphi(y)| \ge 1-|\varphi(x)| \ge c_1.
\ea\ee
Combining (\ref{yzgj1}), (\ref{kv715}), (\ref{kv717}), (\ref{kv718}), (\ref{kv719}), and Poincar\'e's inequality, we obtain
\be\la{kv720}\ba
|J_3| \le C \int_{D_1} \frac{\n ( u_r^2 + u_z^2 )}{|x-y|} dy
\le C \| \na \mathbf{u} \|^2_{L^2}.
\ea\ee

$Case \ 2: |\varphi(x)| > 1-c_1.$
First, we have
\be\la{kv721}\ba
\partial_{i}w_k(x)-\partial_{i}\varphi_k(y)
& =\frac{\partial_{i} \varphi_k(x)}{\vert \varphi (x)\vert ^2}
- \frac{2\varphi_k(x)\varphi _l(x)\partial_{i}\varphi_l(x)}{\vert \varphi (x)\vert ^4} -\partial_{i}\varphi_k(y).
\ea\ee
On the one hand, it follows from (\ref{qyfj3}) and (\ref{kv716}) that
\be\la{kv725}\ba
\left| \frac{\partial_{i}\varphi _k(x)}{\vert \varphi (x)\vert ^2} - \partial _{i}\varphi_k(y) \right|
& \le \left| \frac{\partial_{i}\varphi _k(x)}{\vert \varphi (x)\vert ^2} - \partial_{i}\varphi _k(x) \right|
+ |\partial_{i}\varphi_k(x) - \partial_{i}\varphi_k(y)| \\
& \le \left| \frac{1- \vert \varphi (x)\vert^2}{\vert \varphi (x)\vert^2} \partial_{i}\varphi _k(x) \right| + C |x-y| \\
& \le C \left( 1 - \vert \varphi (x) \vert \right)
+ C |\varphi(x)-\varphi(y)| \\
& \le C \left| \varphi (y)-w(x) \right|,
\ea\ee
where in the last inequality we have used the following fact:
\be\ba\nonumber
2\left| \varphi (y)-w(x) \right| \ge 1-\vert \varphi (x)\vert,
\ea\ee
due to (\ref{kv716}).

On the other hand, noticing that $|\varphi(x)| > 1-c_1$ gives
\be\la{kv726}\ba
\left| -\frac{2\varphi_k(x)\varphi_l(x)\partial_{i}\varphi_l(x) u_i(y)}{\vert \varphi(x)\vert^4} \right|
& \le C \left| \frac{\varphi_l(x)}{\vert \varphi (x)\vert} \partial_{i}\varphi_l(x) u_i(y) \right| \\
& = C \left| \varphi_l(x') \partial_{i}\varphi_l(x) u_i(y) \right|,
\ea\ee
where
\be\la{kv727}\ba
x' \triangleq \varphi^{-1} \left( \frac{\varphi(x)}{\vert \varphi (x)\vert} \right).
\ea\ee

Clearly, $x' \in \p D_1$.
Next, we show that in fact $x' \in \Gamma_1$.
From the selection of $c_1$, we conclude that for any $y \in \Gamma_2$, it holds that $|\varphi(x) - \varphi(y)| \ge c_1$.
We claim that $\frac{\varphi(x)}{|\varphi(x)|} \notin \varphi(\Gamma_2)$.
Otherwise, there exists $z \in \Gamma_2$ such that $\frac{\varphi(x)}{|\varphi(x)|} = \varphi(z)$, which implies
$|\varphi(x) - \varphi(z)| \ge c_1$.

However, we deduce from $|\varphi(x)| > 1-c_1$ that
\be\ba\nonumber
|\varphi(x) - \varphi(z)|
= \left|\varphi(x) - \frac{\varphi(x)}{|\varphi(x)|} \right| =1-|\varphi(x)| < c_1.
\ea\ee
This yields a contradiction, hence $x' \in \Gamma_1$.
Then the boundary condition $\mathbf{u} \cdot n=0$ on $\p \OM$ implies that $u_r(x')=0$.

Furthermore, since $(0,1)$ is the tangent vector at $x'$,
by \cite[Remark 2.1]{FLL} we conclude that $\p_2 \varphi(x')$ corresponds to the tangent vector at $\varphi(x')$,
which shows that $\p_2 \varphi_l(x') \varphi_l(x')=0$.
Thus, we have
\be\ba\nonumber
\varphi_l(x') \partial_{i}\varphi_l(x') u_i(x')=0,
\ea\ee
which yields
\be\la{kv731}\ba
& \left| \varphi_l(x') \partial_{i}\varphi _l(x) u_i(y) \right| \\
& = \left| \varphi_l(x') \partial_{i}\varphi _l(x) u_i(y)
- \varphi_l(x') \partial_{i}\varphi _l(x') u_i(x') \right| \\
& \le \left| \varphi_l(x') u_i(y) \right|
\left| \partial_{i}\varphi_l(x) - \partial_{i}\varphi_l(x') \right|
+ \left|\varphi_l(x') \partial_{i}\varphi_l(x') \right|
\left| u_i(y) - u_i(x') \right| \\
& \le C |x-x'| \left( |u_r(y)| + |u_z(y)| \right)
+ C \left| u_r(y) - u_r(x') \right| + C \left| u_z(y) - u_z(x') \right| \\
& \le C \left( |x-y| + |y-x'| \right) \left( |u_r(y)| + |u_z(y)| \right)
+ C \left| u_r(y) - u_r(x') \right| + C \left| u_z(y) - u_z(x') \right|.
\ea\ee
In addition, it follows from (\ref{qyfj3}), (\ref{kv716}), and (\ref{kv727}) that
\be\la{kv732}\ba
|y-x'| \le C |\varphi(y) - \varphi(x')|
= C \left|\varphi(y) - \frac{\varphi(x)}{|\varphi(x)|} \right|
\le C |\varphi(y) - w(x)|,
\ea\ee
and
\be\la{kv733}\ba
|y-x| \le C \left| \varphi (y)-\varphi (x)\right|
\le \big \vert \varphi (y)- w(x) \big \vert.
\ea\ee
By virtue of (\ref{kv721}), (\ref{kv725}), (\ref{kv726}), (\ref{kv731}),
(\ref{kv732}), and (\ref{kv733}), we obtain
\be\ba\nonumber
& \sum _{k=1}^2 \frac{ \vert (\partial_i w_k(x)
- \partial_i \varphi _k(y)) u_i(y) \vert }{ \vert \varphi (y)-w(x)\vert ^2} \\
& \le C \frac{ \left( |u_r(y)| + |u_z(y)| \right) }{ |x-y| }
+ C \frac{ \left| u_r(y) - u_r(x') \right| + \left| u_z(y) - u_z(x') \right| }{|x'-y|^2},
\ea\ee
which together with (\ref{kv715}) and (\ref{kv717}) gives
\be\ba\nonumber
|J_3| \le C \| \na \mathbf{u} \|^2_{L^2}
+ C \sum^{2}_{i,j=1} \int_{D_1} \frac{\left| u_i(x') - u_i(y) \right|}{|x'-y|^2} \n |u_j| (y) dy.
\ea\ee
Combining this with (\ref{kv79}), (\ref{kv711}), (\ref{kv714}), and (\ref{kv720}) leads to
\be\ba\nonumber
|J| \le C \| \na \mathbf{u} \|^2_{L^2}
+ C \sup_{x \in \ol{D_1}} \left( \sum^{2}_{i,j=1} \int_{D_1} \frac{\left| u_i(x) - u_i(y) \right|}{|x-y|^2} \n |u_j| (y) dy \right),
\ea\ee
which yields (\ref{kvl701}) and finishes the proof of Lemma \ref{ll7}.
\end{proof}

\begin{lemma}\la{l8}
For any $\ep>0$ and $0\le t_1<t_2$,
there exists a positive constant $C$ depending only on
$\ga$, $\beta$, $\mu$, $\ep$, $\|\n_0\|_{L^\infty}$, $\|\mathbf{u}_0\|_{H^1}$, and $K$,
such that when $\ga<2\beta$, it holds that
\be\ba\la{kv08}
\int_{t_1}^{t_2} - G(\mathbf{x}(t),t) dt
\le C R^{1+\ep}_T (t_2-t_1) + C R^{1+\frac{\beta}{4}+3\ep}_T
+ C R_T^{\frac{2+\beta}{3}},
\ea\ee
when $\ga \ge 2\beta$, we have
\be\ba\la{kv008}
\int_{t_1}^{t_2} - G(\mathbf{x}(t),t) dt
\le C R^{1+\frac{\beta}{4}+2\ep}_T (t_2-t_1+1) + C R_T^{\frac{2+\beta}{3}},
\ea\ee
where $\mathbf{x}(t)$ is the flow line determined by $\mathbf{x}(t)'=\mathbf{u}(\mathbf{x}(t),t)$.
\end{lemma}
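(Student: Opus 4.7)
The plan is to integrate the pointwise estimate (\ref{kv701}) from Lemma \ref{l7} along the flow line $\mathbf{x}(t)$ on the interval $[t_1,t_2]$. The material-derivative term then telescopes, $\int_{t_1}^{t_2}\frac{D}{Dt}\psi(\mathbf{x}(t),t)\,dt=\psi(\mathbf{x}(t_2),t_2)-\psi(\mathbf{x}(t_1),t_1)$, so the task reduces to controlling $\|\psi\|_{L^\infty}$, the time integrals of $\|\sqrt{\rho}\dot{\mathbf{u}}\|_{L^2}$, $\|\na\mathbf{u}\|_{L^2}^2$, $\|G\|_{H^1}$ and $\|\na\mathbf{u}\|_{L^4}$, and finally the time integral of $|J|$ furnished by Lemma \ref{ll7}.

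For $\psi$ and for the first singular integral in (\ref{kvl701}), the kernel $\na_{\mathbf{y}}\hat{N}(\mathbf{x},\mathbf{y})$ behaves like $|x-y|^{-1}$ in the planar cross-section, so the bound reduces to controlling a weakly singular integral of $\rho\mathbf{u}$ or $\rho|\mathbf{u}|^2$. I would interpolate between the $L^1$ bound $\rho|\mathbf{u}|^{2+\nu}\in L^\infty_tL^1_x$ from (\ref{kv04}), the bound $\rho\in L^\infty_tL^p_x$ for every finite $p$ from (\ref{yzgj1}), and the $L^\infty$ bound $\rho\le R_T$, choosing exponents so that the kernel is locally integrable; because $\nu=R_T^{-\beta/2}\nu_0$, this interpolation costs a power of $R_T$ that optimizes to $R_T^{(2+\beta)/3}$, producing the last term in both (\ref{kv08}) and (\ref{kv008}). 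The second singular integral in (\ref{kvl701}), involving the difference quotient $|\mathbf{u}(\mathbf{x})-\mathbf{u}(\mathbf{y})|/|x-y|^2$, is estimated by the standard Riesz-potential representation, bringing in $\|\na\mathbf{u}\|_{L^p}$ for some $p>2$ from Lemma \ref{l5}, and is then handled jointly with $\rho|\mathbf{u}|$ by H\"older and the same density interpolation.

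For the remaining terms, Cauchy--Schwarz gives
\[
\int_{t_1}^{t_2}A_2\,dt\le\Bigl(\int_{t_1}^{t_2}\tfrac{A_2^2}{e+A_1^2}\,dt\Bigr)^{1/2}\Bigl(\int_{t_1}^{t_2}(e+A_1^2)\,dt\Bigr)^{1/2},
\]
where the first factor is bounded by $CR_T^{(1+\ep)/2}$ via (\ref{kv06}) and the second by combining the energy estimate (\ref{kv01}) with (\ref{yzgj1})--(\ref{yzgj2}); together with (\ref{kv58}) this also controls $\int\|G\|_{H^1}\,dt$. The $\|\na\mathbf{u}\|_{L^2}^2$ term integrates to a multiple of $t_2-t_1$, and $\|\na\mathbf{u}\|_{L^4}$ is handled by Lemma \ref{l5} combined with H\"older in time, yielding the $R_T^{1+\ep}(t_2-t_1)$ bulk term. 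The dichotomy between (\ref{kv08}) and (\ref{kv008}) comes from which of the two bounds in Lemma \ref{l5} is used: when $\gamma<2\beta$ one may invoke the sharper estimate (\ref{kv005}), which replaces the factor $(1+A_1)^{2/p}$ by $A_1^{2/p}$ and thereby eliminates the $R_T^{\beta/4}$ multiplying $(t_2-t_1)$.

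The main obstacle I anticipate is the bookkeeping for the singular-integral bound of $\psi$ and of $J_2$ in the decomposition implicit in Lemma \ref{ll7}: the exponent $\nu$ in (\ref{kv04}) is itself a negative power of $R_T$, so the H\"older split must be performed in a way that simultaneously (i) renders the kernel locally integrable in two dimensions, (ii) uses only $L^1$ of $\rho|\mathbf{u}|^{2+\nu}$ and $L^p$ bounds on $\rho$ uniform in $R_T$, and (iii) concentrates all $R_T$ loss into a single factor that optimizes to exactly $R_T^{(2+\beta)/3}$. Getting this interpolation balance right is what couples the threshold $\beta>\tfrac{4}{3}$ to the feasibility of ultimately closing the density upper bound in Lemma \ref{l9}.
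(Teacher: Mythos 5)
Your proposal has the right skeleton and matches the paper's argument in the essential places: integrate the pointwise inequality (\ref{kv701}) along the flow line, telescope $\frac{D}{Dt}\psi$, bound $\|\psi\|_{L^\infty}\le CR_T^{(2+\beta)/3}$ via (\ref{kv04}) and H\"older with a weakly singular kernel, control $\int(\|\sqrt{\n}\dot{\mathbf{u}}\|_{L^2}+\|G\|_{H^1}+\|\na\mathbf{u}\|_{L^4})\,dt$ by combining (\ref{kv58}), (\ref{kv05}) and (\ref{kv06}) (your Cauchy--Schwarz in time is a harmless variant of the paper's pointwise Young inequality), and handle the second singular integral in (\ref{kvl701}) by the H\"older-continuity bound $|\mathbf{u}(\mathbf{x})-\mathbf{u}(\mathbf{y})|\le C\|\na\mathbf{u}\|_{L^p}|x-y|^{1-2/p}$, a $\delta$-split in the kernel, and the dichotomy between (\ref{kv05}) and (\ref{kv005}); your explanation of how (\ref{kv005}) removes the $R_T^{\beta/4}$ in front of $t_2-t_1$ when $\ga<2\beta$ is also correct.

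There is, however, a genuine gap in your treatment of the first singular integral $\sup_{\mathbf{x}}\int_{\OM_1}\frac{\n|\mathbf{u}|^2(\mathbf{y})}{|x-y|}\,d\mathbf{y}$. You propose to run the same interpolation as for $\psi$, using $\n|\mathbf{u}|^{2+\nu}\in L^\infty_tL^1_x$, and claim it contributes the $R_T^{(2+\beta)/3}$ term. This fails for two reasons. First, the integrand here carries $\n|\mathbf{u}|^2$, not $\n|\mathbf{u}|$, so the H\"older pairing with $(\n|\mathbf{u}|^{2})^{(2+\nu)/2}$ forces the kernel exponent $(2+\nu)/\nu$, which exceeds $2$ and is not locally integrable in the planar cross-section; the exponent arithmetic that made the $\psi$-bound close (giving $(2+\nu)/(1+\nu)<2$) simply does not carry over. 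Second, and more structurally, even if a time-uniform bound of order $R_T^{(2+\beta)/3}$ could be obtained here, integrating it over $(t_1,t_2)$ would produce $R_T^{(2+\beta)/3}(t_2-t_1)$; since $(2+\beta)/3>1+\ep$ for small $\ep$, this would dominate the declared bulk term $R_T^{1+\ep}(t_2-t_1)$ in (\ref{kv08}) and, traced through the Zlotnik argument in Lemma~\ref{l9}, would fail to close the density bound once $\ga$ is close to $1$. The paper avoids both problems by not touching the $\n|\mathbf{u}|^{2+\nu}$ bound here at all: it applies H\"older with $\|\n\|_{L^6}$ from (\ref{yzgj1}), a kernel exponent $3/2<2$, and the Gagliardo--Nirenberg bound $\|\mathbf{u}\|_{L^{12}}\le C\|\na\mathbf{u}\|_{L^2}$ from Lemma~\ref{ewgn}, obtaining $\sup_{\mathbf{x}}\int\n|\mathbf{u}|^2/|x-y|\,d\mathbf{y}\le C\|\na\mathbf{u}\|^2_{L^2}$, whose time integral is an absolute constant by the energy estimate (\ref{kv01}). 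This makes the first singular integral contribute no power of $R_T$ and no $(t_2-t_1)$ factor at all. Your plan should replace the interpolation step for this term with the energy-dissipation bound.
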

\begin{proof}
First, we conclude from (\ref{kv701}) that
\be\la{kv81}\ba
- G(\mathbf{x}(t),t)
& \le \frac{d}{dt} \psi(t) + C \left( \| \sqrt{\n} \dot{\mathbf{u}} \|_{L^2}
+ \| \na \mathbf{u} \|^2_{L^2} + \| G \|_{H^1}
+ \| \na \mathbf{u} \|_{L^4} \right) + |J|.
\ea\ee
By virtue of (\ref{kv702}), (\ref{kv04}) and H\"older's inequality, we have
\be\ba\nonumber
| \psi(t) | & \le C \int_{D_1} |x-y|^{-1} \n (|u_r| + |u_z|) dy \\
& \le C \left( \int_{D_1} |x-y|^{-\frac{2+\nu}{1+\nu}} dy \right)^{\frac{1+\nu}{2+\nu}}
\left(\int_{D_1}\n^{2+\nu} (|u_r| + |u_z|)^{2+\nu} dy \right)^{\frac{1}{2+\nu}} \\
& \le C \nu^{-\frac{1+\nu}{2+\nu}} R_T^{\frac{1+\nu}{2+\nu}}
\left(\int_{\OM}\n |\mathbf{u}|^{2+\nu} d\mathbf{y} \right)^{\frac{1}{2+\nu}} \\
& \le C \nu^{-\frac{1+\nu}{2+\nu}} R_T^{\frac{1+\nu}{2+\nu}} \\
& \le C R_T^{\frac{2+\beta}{3}},
\ea\ee
which gives
\be\la{kv82}\ba
\int_{t_1}^{t_2} \frac{d}{dt} \psi(t) dt \le C R_T^{\frac{2+\beta}{3}}.
\ea\ee
Moreover, using (\ref{kv01}), (\ref{kv05}), (\ref{kv58}) and (\ref{kv06}), we derive
\be\la{kv83}\ba
& \int_{t_1}^{t_2} \left( \| \sqrt{\n} \dot{\mathbf{u}} \|_{L^2}
+ \| \na \mathbf{u} \|^2_{L^2} + \| G \|_{H^1}
+ \| \na \mathbf{u} \|_{L^4} \right) dt \\
& \le C \int_{t_1}^{t_2} \left( A_2
+ A^2_1 + R^{\frac{1}{2}}_T A_2 + R^{\frac{1}{4}+\ep}_T (1+A_1+A_2) \right) dt \\
& \le C \int_{t_1}^{t_2} \left( R_T ( 1+A^2_1 ) + \frac{A^2_2}{e+A^2_1} \right) dt \\
& \le C R^{1+\ep}_T (t_2-t_1+1).
\ea\ee
To estimate $|J|$, we recall from (\ref{kvl701}) that
\be\la{kv84}\ba
|J| & \le C \| \na \mathbf{u} \|^2_{L^2}
+ C \sup_{x \in \ol{D_1}} \left( \sum^{2}_{i,j=1} \int_{D_1} \frac{\left| u_i(x) - u_i(y) \right|}{|x-y|^2} \n |u_j| (y) dy \right).
\ea\ee
For any $x,y \in \overline{D_1}$,
the Sobolev embedding theorem (Theorem 4 of \cite[Chapter 5]{EL}) shows that for any $2<p<\infty$
\be\ba\nonumber
& |u_r(x) - u_r(y)| + |u_z(x) - u_z(y)| \\
& \le C(p) \left( \| \tilde{\na} u_r \|_{L^p(D_1)} + \| \tilde{\na} u_z \|_{L^p(D_1)} \right) |x-y|^{1-\frac{2}{p}} \\
& \le C(p) \| \na \mathbf{u} \|_{L^p(\OM)} |x-y|^{1-\frac{2}{p}},
\ea\ee
which implies
\be\la{kv88}\ba
& \sum^{2}_{i,j=1} \int_{D_1} \frac{\left| u_i(x) - u_i(y) \right|}{|x-y|^2} \n |u_j| (y) dy \\
& \le C \| \na \mathbf{u} \|_{L^p}
\int_{D_1} |x-y|^{-(1+\frac{2}{p})} \n ( |u_r| +|u_z|) dy.
\ea\ee

For $\de>0$ and $0<2s<1-\frac{2}{p}$, which will be determined later,
by applying H\"older's inequality and Lemma \ref{ewgn}, we derive
\be\la{kv89}\ba
& \int _{\vert x-y\vert<2\delta }\vert x-y\vert ^{-\left( 1+\frac{2}{p}\right) }\rho \vert u_r \vert (y)\,dy \\
& \leq C R_T \left( \int _{\vert x-y\vert <2\delta } \vert x-y \vert ^{-\left( 1+\frac{2}{p}\right) \frac{1}{1-s} } \,dy\right)
^{1-s} \Vert u_r \Vert _{L^{1/s}} \\
& \leq C R_T \delta ^{1-\frac{2}{p}-2s}
\left( s^{-\frac{1}{2}} \Vert u_r \Vert _{H^1} \right) \\
& \leq C s^{-\frac{1}{2}} R_T \left( A_1 \delta ^{1-\frac{2}{p}-2s} \right).
\ea\ee
In addition, we deduce from (\ref{kv04}) and H\"older's inequality that
\be\la{kv810}\ba
& \int _{\vert x-y\vert>\delta }\vert x-y\vert ^{-\left( 1+\frac{2}{p}\right) }\rho \vert u_r \vert (y)\,dy \\
& \leq C \left( \int _{\vert x-y\vert >\delta } \vert x-y\vert ^{-\left( 1+\frac{2}{p}\right) (\frac{2+\nu }{1+\nu })} \,dy\right)
^{\frac{1+\nu }{2+\nu }} \left( \int _\Omega \rho ^{2+\nu }\vert \mathbf{u} \vert ^{2+\nu }\,d \mathbf{x} \right) ^{\frac{1}{2+\nu }} \\
& \leq C R_T \delta ^{-\frac{2}{p}+\frac{\nu }{2+\nu }}.
\ea\ee
Now choose $\de>0$ such that
\be\la{kv811}\ba
\delta ^{-\frac{2}{p}+\frac{\nu }{2+\nu }} = A_1^{ \frac{2}{p} },
\ea\ee
For $\nu$ given by (\ref{kv004}) and any $2<p<6$, we set $2s=\frac{\nu}{2+\nu} \cdot \frac{p-2}{2}$,
which satisfies that $0<2s<1-\frac{2}{p}$.
Combining this with (\ref{kv811}) leads to
\be\la{kv812}\ba
A_1 \delta ^{1-\frac{2}{p}-2s} = A_1^{\frac{2}{p}}.
\ea\ee
Thus, from (\ref{kv89}), (\ref{kv810}), (\ref{kv811}) and (\ref{kv812}),
we conclude that for any $2<p<6$
\be\la{kv813}\ba
& \int_{D_1} |x-y|^{-(1+\frac{2}{p})} \n |u_r| dy \\
& \le \left( \int _{\vert x-y\vert<2\delta }
+ \int _{\vert x-y\vert>\delta } \right)
\vert x-y\vert ^{-\left( 1+\frac{2}{p}\right) }\rho \vert u_r \vert (y)\,dy \\
& \le C s^{-\frac{1}{2}} R_T A_1^{\frac{2}{p}} + C R_T A_1^{\frac{2}{p}} \\
& \le C R^{1+\frac{\beta}{4}}_T A_1^{\frac{2}{p}},
\ea\ee
where in the last inequality we have used
$s^{-\frac{1}{2}} \le C(p) \nu^{-1/2} \le C(p) R_T^{\frac{\beta}{4}}$ by (\ref{kv004}).

Similarly, we also have
\be\ba\nonumber
\int_{D_1} |x-y|^{-(1+\frac{2}{p})} \n |u_z|(y) dy
\le C R^{1+\frac{\beta}{4}}_T A_1^{\frac{2}{p}},
\ea\ee
which together with (\ref{kv88}) and (\ref{kv813}) shows that for any $2<p<6$
\be\la{kv815}\ba
\sum^{2}_{i,j=1} \int_{D_1} \frac{\left| u_i(x) - u_i(y) \right|}{|x-y|^2} \n |u_j| (y) dy
\le C \| \na \mathbf{u} \|_{L^p} R^{1+\frac{\beta}{4}}_T A_1^{\frac{2}{p}}.
\ea\ee

Next, by employing Lemma \ref{l5}, we estimate (\ref{kv815}) through the following two distinct cases:

$Case \ 1: \ga<2\beta.$
For any $\ep \in (0,\frac{1}{2})$, take $2<p<6$ sufficiently close to $2$ such that
\be\ba\nonumber
\frac{p}{2}< \min \left\{ \frac{\ga+1}{\ga},\ 
\frac{1+\beta/4+3\ep}{1+\beta/4+2\ep},\ \frac{1}{1-2\ep} \right\}.
\ea\ee
From (\ref{kv005}), we deduce that
\be\ba\nonumber
\| \nabla \mathbf{u} \|_{L^{p}}
& \le C R^{\frac{1}{2}-\frac{1}{p}+\ep}_T A_1^{\frac{2}{p}} (1+A_1+A_2)^{1-\frac{2}{p}}
\le C R^{2\ep}_T A_1^{\frac{2}{p}} (1+A_1+A_2)^{1-\frac{2}{p}},
\ea\ee
which together with (\ref{kv815}) and Young's inequality yields
\be\la{kv818}\ba
& \sum^{2}_{i,j=1} \int_{D_1} \frac{\left| u_i(x) - u_i(y) \right|}{|x-y|^2} \n |u_j| (y) dy \\
& \le C R^{1+\frac{\beta}{4}+2\ep}_T A_1^{\frac{4}{p}} (1+A_1+A_2)^{1-\frac{2}{p}} \\
& \le C R^{ (1+\frac{\beta}{4}+2\ep) \frac{p}{2} }_T A^2_1
+ C (1+A_1+A_2) \\
& \le C\left( 1+R^{1+\frac{\beta}{4}+3\ep}_T A^2_1+\frac{A^2_2}{e+A^2_1} \right).
\ea\ee
Integrating (\ref{kv818}) over $(t_1,t_2)$ and using (\ref{kv01}), (\ref{yzgj2}) and (\ref{kv06}) gives
\be\la{kv819}\ba
\int_{t_1}^{t_2} \sup_{x \in \ol{D_1}} \int_{D_1} \frac{\left| u_i(x) - u_i(y) \right|}{|x-y|^2} \n |u_j| (y) dy dt
\le C(t_2-t_1) + C R^{1+\frac{\beta}{4}+3\ep}_T.
\ea\ee

$Case \ 2: \ga \ge 2\beta.$
By virtue of (\ref{kv05}), we have
\be\ba\nonumber
\| \nabla \mathbf{u} \|_{L^{p}}
& \le C R^{\frac{1}{2}-\frac{1}{p}+\ep}_T (1+A_1)^{\frac{2}{p}}
(1+A_1+A_2)^{1-\frac{2}{p}},
\ea\ee
which along with (\ref{kv815}) and Young's inequality leads to
\be\la{kv821}\ba
& \sum^{2}_{i,j=1} \int_{D_1} \frac{\left| u_i(x) - u_i(y) \right|}{|x-y|^2} \n |u_j| (y) dy \\
& \le C R^{ \frac{3}{2}-\frac{1}{p}+\frac{\beta}{4}+\ep }_T
( A_1^{\frac{2}{p}} + A_1^{\frac{4}{p}} )
(1+A_1+A_2)^{1-\frac{2}{p}} \\
& \le C R^{ (\frac{3}{2}-\frac{1}{p}+\frac{\beta}{4}+\ep) \frac{p}{2} }_T (1+A^2_1)
+ C (1+A_1+A_2) \\
& \le C\left( R^{1+\frac{\beta}{4}+2\ep}_T (1+A^2_1) + \frac{A^2_2}{e+A^2_1} \right),
\ea\ee
provided $2<p<6$ sufficiently close to $2$ such that
$\frac{p}{2} \le \frac{3/2+\beta/4+2\ep}{3/2+\beta/4+\ep}$.

Integrating (\ref{kv821}) over $(t_1,t_2)$ and using (\ref{kv01}), (\ref{yzgj2}) and (\ref{kv06}), we arrive at
\be\la{kv822}\ba
\int_{t_1}^{t_2} \sup_{x \in \ol{D_1}} \int_{D_1} \frac{\left| u_i(x) - u_i(y) \right|}{|x-y|^2} \n |u_j| (y) dy dt
\le C R^{1+\frac{\beta}{4}+2\ep}_T (t_2-t_1+1).
\ea\ee
Finally, combining (\ref{kv81}), (\ref{kv82}), (\ref{kv83}), (\ref{kv84}), (\ref{kv819}), and (\ref{kv822}),
we obtain (\ref{kv08}) and (\ref{kv008}).
This completes the proof of Lemma \ref{l8}.
\end{proof}

\begin{lemma}\la{l9}
There exists a positive constant $C$ depending only on 
$\ga$, $\beta$, $\mu$, $\|\n_0\|_{L^\infty}$, $\|\mathbf{u}_0\|_{H^1}$, and $K$,
such that
\be\ba\la{kv09}
\sup_{0\leq t\leq T} \left( \|\n\|_{L^\infty} + \| \mathbf{u} \|_{H^1} \right)
+ \int_0^T \left( \| \mathbf{u} \|^2_{H^1} + \| \sqrt{\n} \dot{\mathbf{u}} \|^2_{L^2} \right) dt
\le C.
\ea\ee
\end{lemma}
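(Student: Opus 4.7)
The plan is to obtain the time-uniform upper bound for $\n$ via Zlotnik's inequality (Lemma \ref{zli}) along particle trajectories, and then read off the $H^1$ and $L^2_tL^2_x$ bounds from the estimates already in hand. Set $\theta(\n) = 2\mu \log \n + \frac{1}{\beta}\n^\beta$. Combining $(\ref{ns})_1$ with the definition of $F$ in (\ref{yxt01}) gives the pointwise identity
\[
\frac{D}{Dt}\theta(\n) + P(\n) = -(F - \ol{F}) - \ol{F}.
\]
Along a trajectory $\mathbf{x}(t)$ satisfying $\mathbf{x}'(t) = \mathbf{u}(\mathbf{x}(t),t)$, inserting the decomposition $F - \ol{F} = \p_t \tilde{F}_1 + F_2 + F_3$ from Lemma \ref{l2} puts this into the Zlotnik form $y'(t) = g(y) + b'(t)$ with $y(t) = \theta(\n(\mathbf{x}(t),t))$ and $g(y) = -P(\theta^{-1}(y)) \sim -y^{\ga/\beta}$ as $y \to \infty$, so $g(\infty) = -\infty$ is automatic.

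Next, I would bound $b(t_2) - b(t_1) \le N_0 + N_1(t_2 - t_1)$. The delicate contribution is $\int_{t_1}^{t_2} (-G(\mathbf{x}(t),t))\,dt$, for which Lemma \ref{l8} is tailor-made. The remaining pieces $\tilde F_1(t_2) - \tilde F_1(t_1)$, $\int_{t_1}^{t_2}(F_2 + F_3 + \ol{F} + P(\ol{\n}))\,dt$ are controlled by Lemma \ref{l2}, (\ref{kv01}), (\ref{yzgj1}), Lemma \ref{ewgn}, and (\ref{kv06}). In the case $\ga < 2\beta$ this produces $N_1 \le C R_T^{1+\ep}$ and $N_0 \le C R_T^{1 + \beta/4 + 3\ep} + C R_T^{(2+\beta)/3}$; the case $\ga \ge 2\beta$ is handled analogously via (\ref{kv008}), with $N_1 \le C R_T^{1 + \beta/4 + 2\ep}$ and a comparable $N_0$.

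Lemma \ref{zli} then gives $\theta(\n(\mathbf{x}(t),t)) \le \max\{\theta(\n_0(\mathbf{x}(0))), \ol{\zeta}\} + N_0$, where $\ol{\zeta}$ is chosen so that $g(\ol{\zeta}) \le -N_1$; solving yields $\ol{\zeta} \sim N_1^{\beta/\ga}$. Since $\theta(\n) \sim \n^\beta/\beta$ for large $\n$, this translates into a self-improving inequality of the form
\[
R_T \le C + C N_1^{1/\ga} + C N_0^{1/\beta}.
\]
A bootstrap closes this inequality precisely when each exponent of $R_T$ on the right is strictly less than $1$: $(1+\ep)/\ga < 1$ from $\ga > 1$, $(2+\beta)/(3\beta) < 1$ from $\beta > 1$, and the critical condition $(1 + \beta/4 + 3\ep)/\beta < 1$, equivalent to $\beta > 4/3 + 4\ep$. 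Taking $\ep$ small under the hypothesis $\beta > 4/3$ in (\ref{bg}) closes the argument and delivers $R_T \le C$. When $\ga \ge 2\beta$, the analogous condition $(1+\beta/4+2\ep)/\ga < 1$ is automatic since $\ga \ge 2\beta > 8/3$.

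With $R_T \le C$ in hand, (\ref{kv06}) reduces to $\sup_{[0,T]} A_1(t) \le C$ and $\int_0^T A_2^2/(e+A_1^2)\,dt \le C$, so $\int_0^T \|\sqrt{\n}\dot{\mathbf{u}}\|^2_{L^2} dt \le C$. The uniform bound on $A_1$ together with (\ref{kv616}) yields $\sup_{[0,T]} \|\mathbf{u}\|_{H^1} \le C$, and $\int_0^T \|\mathbf{u}\|^2_{H^1} dt \le C$ follows from (\ref{kv01}) and (\ref{yzgj2}). The main obstacle throughout is confirming that the largest power of $R_T$ in $N_0$ is exactly $R_T^{1+\beta/4+3\ep}$; any weaker pointwise control of $G$ in Lemmas \ref{l7} and \ref{l8} would push the critical threshold above $4/3$, and this is precisely where the Riemann-mapping Green-function representation together with the axisymmetric reduction must be exploited in full strength.
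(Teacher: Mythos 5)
Your argument is correct and follows the paper's own route: rewrite the mass equation as $\frac{D}{Dt}\theta(\n) + P = -G + P(\ol{\n})$, apply Zlotnik's lemma along the particle trajectory with $h(t)=\int_0^t (P(\ol{\n})-G)\,ds$, feed in the two-case trajectory estimate of $\int_{t_1}^{t_2}(-G)(\mathbf{x}(s),s)\,ds$ from Lemma~\ref{l8}, and close the self-improving inequality for $R_T$ by choosing $\ep$ small so that the dominant exponent $1+\beta/4+3\ep$ (resp.\ $1+\beta/4+2\ep$) falls strictly below $\beta$, which is exactly the condition $\beta>4/3$. The only stray step is the opening invocation of the $F$-decomposition $F-\ol{F}=\p_t\tilde F_1+F_2+F_3$ from Lemma~\ref{l2}: that machinery underlies the $L^p$ estimates of Lemma~\ref{l3}, but the Zlotnik step here works directly with $-G$ through Lemma~\ref{l8} and needs no separate (and, if combined, partly double-counted) control of $\tilde F_1$, $F_2$, $F_3$.
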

\begin{proof}
First, we rewrite $(\ref{ns})_1$ using $(\ref{gw})$ as:
\be\la{kv91}\ba
\frac{d}{dt} \theta(\n) + P = -G + P(\ol{\n}),
\ea\ee
where $\theta(\n) = 2\mu \log\n + \frac{1}{\beta} \n^\beta$.

Since the function $y=\theta(\n)$ is strictly increasing on $(0,\infty)$,
its inverse function $\n=\theta^{-1}(y)$ exists for $y\in(-\infty,\infty)$.
We now express (\ref{kv91}) as:
\be\ba\nonumber
y'(t) = g(y) + h'(t),
\ea\ee
with
\be\la{kv93}\ba
y=\theta(\n), \quad g(y)=-P( \theta^{-1}(y) ), \quad h=\int_0^t \left( P(\ol{\n}) - G \right) ds.
\ea\ee
Note that $g(\infty)=-\infty$.
Next, we estimate $h$ in two cases.

$Case \ 1: \ga<2\beta.$
It follows from (\ref{kv01}) and (\ref{kv08}) that
\be\ba\nonumber
h(t_2)-h(t_1) \le C \left( R^{1+\frac{\beta}{4}+3\ep}_T + R_T^{\frac{2+\beta}{3}} \right)
+ C R^{1+\ep}_T (t_2-t_1).
\ea\ee
Then, we choose $N_0$, $N_1$ and $\overline{\zeta}$ in Lemma \ref{zli} as follows:
\be\la{kv95}\ba
N_0 = C \left( R^{1+\frac{\beta}{4}+3\ep}_T + R_T^{\frac{2+\beta}{3}} \right),\quad
N_1 = C R^{1+\ep}_T,\quad
\overline{\zeta} = \theta \left( \left( C R^{1+\ep}_T \right)^{1/\ga} \right),
\ea\ee
which together with (\ref{kv93}) implies
\be\ba\nonumber
g(\zeta)=-( \theta^{-1}(\zeta) )^\ga \le - N_1 = - C R^{1+\ep}_T
\quad \text{ for all } \zeta \ge \overline{\zeta}.
\ea\ee

Moreover, since $R_T \ge 1$, we have $\overline{\zeta} \le C R^{ (1+\ep)\frac{\beta}{\ga} }_T$.
Combining this with (\ref{kv95}) and Lemma \ref{zli}, we obtain
\be\la{kv97}\ba
R^\beta_T \le C R_T^{ \max\{ 1+\frac{\beta}{4}+3\ep,\frac{2+\beta}{3},
(1+\ep)\frac{\beta}{\ga} \} }.
\ea\ee
By virtue of $\beta>4/3$ and $\ga>1$,
we set $0<\ep<\min\{ (3\beta-4)/12,\ga-1 \}$, which along with (\ref{kv97}) shows
\be\la{kv98}\ba
\sup_{0 \le t \le T} \| \n \|_{L^\infty} \le C.
\ea\ee

$Case \ 2: \ga \ge 2\beta.$
From (\ref{kv01}) and (\ref{kv008}), we have
\be\ba\nonumber
h(t_2)-h(t_1) \le C \left( R^{1+\frac{\beta}{4}+2\ep}_T + R_T^{\frac{2+\beta}{3}} \right)
+ C R^{1+\frac{\beta}{4}+2\ep}_T (t_2-t_1).
\ea\ee
Next, we select $N_0$, $N_1$ and $\overline{\zeta}$ in Lemma \ref{zli} as:
\be\ba\nonumber
N_0 = C \left( R^{1+\frac{\beta}{4}+2\ep}_T + R_T^{\frac{2+\beta}{3}} \right),\quad
N_1 = C R^{1+\frac{\beta}{4}+2\ep}_T,\quad
\overline{\zeta} = \theta \left( \left( C R^{1+\frac{\beta}{4}+2\ep}_T \right)^{1/\ga} \right).
\ea\ee
Similarly, applying Lemma \ref{zli} yields
\be\la{kv911}\ba
R^\beta_T \le C R_T^{ \max\{ 1+\frac{\beta}{4}+2\ep,\frac{2+\beta}{3},
(1+\frac{\beta}{4}+2\ep)\frac{\beta}{\ga} \} }.
\ea\ee
Note that $\ga \ge 2\beta$ implies that
$(1+\frac{\beta}{4}+2\ep)\frac{\beta}{\ga} \le 1+\frac{\beta}{4}+2\ep$,
hence we conclude from (\ref{kv911}) that
\be\la{kv912}\ba
R^\beta_T \le C R_T^{ \max\{ 1+\frac{\beta}{4}+2\ep,\frac{2+\beta}{3} \} }.
\ea\ee
In view of $\beta>4/3$,
we choose $0<\ep<(3\beta-4)/8$. Then (\ref{kv912}) gives
\be\la{kv913}\ba
\sup_{0 \le t \le T} \| \n \|_{L^\infty} \le C.
\ea\ee
The combination of (\ref{kv98}), (\ref{kv913}), (\ref{kv01}), (\ref{kv06}), and 
Poincar\'e's inequality implies (\ref{kv09}) and finishes the proof of Lemma \ref{l9}.
\end{proof}

\section{A Priori Estimates (\uppercase\expandafter{\romannumeral2}): Higher Order Estimates}

This section is devoted to establishing some necessary higher-order estimates
for the axisymmetric strong solution of (\ref{ns})--(\ref{i3}) that satisfies (\ref{lct1}).
These estimates ensure that the strong solution can be extended globally in time.
The arguments are primarily adapted from \cite{CL,H1,FLL,HL2} with some modifications.

\begin{lemma}\la{gg1}
There exists a positive constant $C$ depending only on $\mu$, $\ga$, $\beta$, 
$\| \n_0 \|_{L^\infty}$, $\| \mathbf{u}_0 \|_{H^1}$, and $K$ such that
\be\ba\la{gkv01}
\sup_{0\le t\le T}
\si \int\n| \dot{\mathbf{u}} |^2dx
+\int_0^{T} \si \| \na \dot{\mathbf{u}} \|^2_{L^2} dt \le C,
\ea\ee
with $\si \triangleq \min\{ 1,t \}$.
Moreover, for any $p\in [1,\infty)$, there is a positive constant $C$ depending only on $p$, $\mu$, $\ga$, $\beta$, 
$\| \n_0 \|_{L^\infty}$, $\| \mathbf{u}_0 \|_{H^1}$, and $K$ such that
\be\ba\la{gkv02}
\sup_{1\le t\le T} \| \na \mathbf{u} \|_{L^p} \le C.
\ea\ee
\end{lemma}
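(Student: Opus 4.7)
The plan is to establish (\ref{gkv01}) via a Hoff-type weighted energy estimate for the material derivative $\dot{\mathbf{u}}$, and then bootstrap to (\ref{gkv02}) via the $L^p$-gradient estimate already proved in Lemma \ref{l5}. All the pieces needed (the uniform density bound, the $L^2$-control of $\nabla \mathbf{u}$, the div-curl inequalities (\ref{dltdc01})--(\ref{dltdc02}), the Gagliardo-Nirenberg-Sobolev inequalities of Lemma \ref{ewgn}, and the boundary identities (\ref{bjds})) are now in hand from the previous section.

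First I would apply the operator $\partial_t + \div(\cdot \,\mathbf{u})$ to each component of $(\ref{ns})_2$. Together with $(\ref{ns})_1$ this produces a transport-type equation for $\rho\dot{\mathbf{u}}$ of the form
\be\ba\nonumber
(\rho \dot{u}^j)_t + \div(\rho \mathbf{u}\,\dot{u}^j)
= \mu \Delta \dot{u}^j + \p_j\!\big((\mu+\lambda)\div\dot{\mathbf{u}}\big) + R^j,
\ea\ee
where $R^j$ gathers commutators of the form $\mu\p_i(\p_i u^j\,\div\mathbf{u}) - \mu\div(\p_i\mathbf{u}\,\p_i u^j) - \p_j((\mu+\lambda)(\div\mathbf{u})^2) + \cdots$ together with pressure commutators controlled pointwise by $|\nabla\mathbf{u}|^2 + (\rho^\beta + \rho^\gamma)|\nabla\mathbf{u}|$. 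Testing this equation against $\dot{\mathbf{u}}$ and integrating by parts yields $\frac{1}{2}\frac{d}{dt}\int\rho|\dot{\mathbf{u}}|^2\,dx$ on the left, a dissipation term $\int(2\mu+\lambda)(\div\dot{\mathbf{u}})^2+\mu|\curl\dot{\mathbf{u}}|^2\,dx$, plus boundary terms and cubic interior terms on the right.

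The delicate point, which I expect to be the main obstacle, is the handling of the boundary terms. Differentiating $\mathbf{u}\cdot n=0$ in $t$ gives $\mathbf{u}_t\cdot n=0$ on $\p\Omega$, but the convective piece forces $\dot{\mathbf{u}}\cdot n = -(\mathbf{u}\cdot\nabla)n\cdot\mathbf{u}$ by (\ref{bjds}), so $\dot{\mathbf{u}}$ fails to satisfy the no-penetration condition. To apply Lemmas \ref{dltdc1}-\ref{dltdc2} and recover $\|\nabla\dot{\mathbf{u}}\|_{L^2}$ from $\|\div\dot{\mathbf{u}}\|_{L^2}+\|\curl\dot{\mathbf{u}}\|_{L^2}$, I would correct $\dot{\mathbf{u}}$ by the quadratic quantity above: either work directly with $\mathbf{u}_t$ and shuffle the convective corrections into the source (their $H^1$-norm is controlled by $\|\nabla\mathbf{u}\|_{L^4}^2$), or keep $\dot{\mathbf{u}}$ and treat each boundary integral $\int_{\p\Omega}\dot{\mathbf{u}}\cdot K\cdot\dot{\mathbf{u}}\,ds$, $\int_{\p\Omega}G\,\mathbf{u}\cdot\nabla n\cdot\mathbf{u}\,ds$, and $\int_{\p\Omega}\curl\dot{\mathbf{u}}\cdot(K\mathbf{u})^\perp\,ds$ using (\ref{bjds}), the divergence theorem, and Hölder and trace inequalities, in the spirit of the estimate of $I_8$ in the proof of Lemma \ref{l6} (see (\ref{kv612})). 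After these manipulations the right-hand side is bounded by
\be\ba\nonumber
\tfrac{\mu}{2\Lambda}\|\nabla\dot{\mathbf{u}}\|_{L^2}^2 + C\|\nabla\mathbf{u}\|_{L^4}^4 + C\|\nabla\mathbf{u}\|_{L^2}^2 + C,
\ea\ee
the first term being absorbed into the dissipation.

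To close, I would multiply the resulting differential inequality by $\sigma=\min\{1,t\}$ and integrate on $(0,T)$. The term $\sigma'\int\rho|\dot{\mathbf{u}}|^2\,dx$ that appears is supported on $[0,1]$ and bounded through $\rho|\dot{\mathbf{u}}|^2 \le 2\rho|\mathbf{u}_t|^2+2\rho|\mathbf{u}\cdot\nabla\mathbf{u}|^2$ after testing $(\ref{ns})_2$ against $\mathbf{u}_t$; the $\|\nabla\mathbf{u}\|_{L^4}^4$ contribution is estimated via (\ref{kv05}) with $p=4$, giving $\|\nabla\mathbf{u}\|_{L^4}^4 \le C(1+A_2^2)(1+A_1^2)^{...}$, and after applying the uniform bounds (\ref{kv09}) the resulting Gronwall-type inequality yields (\ref{gkv01}). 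Finally, for (\ref{gkv02}), on $[1,T]$ the weight $\sigma=1$, so $A_2(t)=\|\sqrt\rho\,\dot{\mathbf{u}}(t)\|_{L^2}$ is uniformly bounded; combined with the uniform bounds on $R_T$ and $A_1$ from (\ref{kv09}), estimate (\ref{kv05}) of Lemma \ref{l5} directly delivers $\|\nabla\mathbf{u}\|_{L^p}\le C$ for all $p\in[1,\infty)$.
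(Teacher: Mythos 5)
Your proposal is correct and follows essentially the same route as the paper: a Hoff-type energy estimate for $\dot{\mathbf{u}}$, with boundary terms handled via \eqref{bjds}, the correction of $\dot{\mathbf{u}}$ to a vector satisfying the no-penetration condition (the paper takes $\mathbf{v}=\dot{\mathbf{u}}+(\mathbf{u}\cdot\nabla n)\times\mathbf{u}^\perp$) so that Lemmas \ref{dltdc1}--\ref{dltdc2} recover $\|\nabla\dot{\mathbf{u}}\|_{L^2}$, followed by multiplication by $\sigma$ and integration, and finally the gradient bound \eqref{kv05} (the paper uses the equivalent \eqref{kv58}) for \eqref{gkv02}. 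The only minor inefficiency is your detour through $\mathbf{u}_t$ to control the $\sigma'$ term: since $|\sigma'|\le 1$ and Lemma \ref{l9} already gives $\int_0^T\|\sqrt{\rho}\dot{\mathbf{u}}\|_{L^2}^2\,dt\le C$, that contribution is bounded directly.
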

\begin{proof}
The idea of this proof is adapted from \cite{CL,H1,FLL}.
Operating $ \dot{\mathbf{u}}^j[\frac{\pa}{\pa t}+\div(\mathbf{u}\cdot)]$ to
$(\ref{kv54})^j,$ summing with respect to $j,$
and integrating by parts over $\OM$, we obtain
\be\la{gkv11}\ba
\frac{d}{dt}\left(\frac{1}{2}\int\rho|\dot{\mathbf{u}}|^2dx \right)
&=\int \bigg( {\dot{\mathbf{u}}}\cdot \nabla G_t + {\dot{\mathbf{u}}}^j\mathrm {div}(\mathbf{u} \partial _jG) \bigg) dx\\
&\quad - \mu \int \bigg( {\dot{\mathbf{u}}} \cdot \na \times \curl \mathbf{u}_t
+ {\dot{\mathbf{u}}}^j\partial _k(\mathbf{u}^k ( \na \times \curl \mathbf{u} )^j ) \bigg) dx \\
&=I_1+I_2.
\ea\ee
For $I_1$, integration by parts combined with H\"older's and Young's inequalities yields
\be\la{gkv12}\ba
I_1 & = \int_{\partial \Omega} G_t ( \dot{\mathbf{u}} \cdot n) ds
- \int \div \dot{\mathbf{u}} \left( \dot{G} - \mathbf{u} \cdot \na G \right) dx
- \int \mathbf{u} \cdot \na \dot{\mathbf{u}}^j \p_j G dx \\
& \le \int_{\partial \Omega} G_t ( \dot{\mathbf{u}} \cdot n) ds
- \int \div \dot{\mathbf{u}} \dot{G} dx + C \| \na \dot{\mathbf{u}} \|_{L^2} \| \mathbf{u} \|_{L^6} \| \na G \|_{L^3} \\
& \le  \int_{\partial \Omega} G_t ( \dot{\mathbf{u}} \cdot n) ds
- \int \div \dot{\mathbf{u}} \dot{G} dx
+ \varepsilon \Vert \nabla \dot{\mathbf{u}} \Vert_{L^2}^2
+ C(\ep) ( A^2_1 + A^2_2 ),
\ea\ee
where in the last inequality we have used the following estimate:
\be\la{gkv14}\ba
& \| \na G \|_{L^3}+\| \na \curl \mathbf{u} \|_{L^3} \\
& \le \| \na G \|^{\frac{1}{2}}_{L^2} \| \na G \|^{\frac{1}{2}}_{L^6}
+\| \na \curl \mathbf{u} \|^{\frac{1}{2}}_{L^2} \| \na \curl \mathbf{u} \|^{\frac{1}{2}}_{L^6} \\
& \le C \left( A_1 + A_2 \right)^{\frac{1}{2}} 
\left(\| \n \dot{\mathbf{u}}\|_{L^6} + \| \na \mathbf{u} \|_{L^6} \right)^{\frac{1}{2}} \\
& \le C \left( 1 + A_2 \right)^{\frac{1}{2}}
\left( \| \sqrt{\n} \dot{\mathbf{u}}\|_{L^2}
+ \| \na \dot{\mathbf{u}}\|_{L^2} +1+ A_1 + A_2 \right)^{\frac{1}{2}} \\
& \le C \left( 1 + A_2 \right)^{\frac{1}{2}}
\left( 1+A_2 + \| \na \dot{\mathbf{u}}\|_{L^2} \right)^{\frac{1}{2}} \\
& \le C \left( 1 + A_2 + \| \na \dot{\mathbf{u}}\|^{\frac{1}{2}}_{L^2}
+A^{\frac{1}{2}}_2 \| \na \dot{\mathbf{u}}\|^{\frac{1}{2}}_{L^2}\right),
\ea\ee
due to (\ref{pt1}), (\ref{kv05}), (\ref{kv56}), (\ref{kv57}), (\ref{kv58}), and (\ref{kv09}).

Next, for the boundary term in (\ref{gkv12}), using (\ref{i3}) and (\ref{bjds}), we derive
\be\la{gkv15}\ba
&\int _{\partial \Omega }G_t({\dot{\mathbf{u}}}\cdot n) ds\\ 
&= - \int _{\partial \Omega }G_t( \mathbf{u} \cdot \na n \cdot \mathbf{u}) ds \\
&= -\frac{d}{dt} \int _{\partial \Omega } G ( \mathbf{u} \cdot \na n \cdot \mathbf{u} )ds
+\int _{\partial \Omega } G( \mathbf{u} \cdot \na n \cdot \mathbf{u} )_t ds\\ 
&= -\frac{d}{dt} \int _{\partial \Omega } G( \mathbf{u} \cdot \na n \cdot \mathbf{u} )ds
+ \int _{\partial \Omega } G({\dot{\mathbf{u}}} \cdot \na n \cdot \mathbf{u} )
+ G( \mathbf{u} \cdot \nabla n \cdot {\dot{\mathbf{u}}}) ds  \\
&\quad - \int _{\partial \Omega } G \left( ( \mathbf{u} \cdot \nabla \mathbf{u} )\cdot \nabla n \cdot \mathbf{u} \right) ds
- \int _{\partial \Omega} G \left( \mathbf{u} \cdot \nabla n \cdot ( \mathbf{u} \cdot \nabla \mathbf{u} ) \right) ds \\
&=-\frac{d}{dt} \int_{\partial \Omega } G ( \mathbf{u} \cdot \nabla n \cdot \mathbf{u} )ds + J_1+J_2+J_3.
\ea\ee
It follows from (\ref{pt1}), (\ref{kv58}), (\ref{kv09}), and Poincar\'e's inequality that
\be\la{gkv16}\ba
J_1 & = \int _{\partial \Omega } G({\dot{\mathbf{u}}} \cdot \na n \cdot \mathbf{u} )
+ G( \mathbf{u} \cdot \nabla n \cdot {\dot{\mathbf{u}}}) ds \\
& \leq C \Vert G \Vert_{H^1} \Vert \dot{\mathbf{u}} \Vert_{H^1} \Vert \mathbf{u} \Vert_{H^1} \\
& \leq C ( A_1 + A_2 )
\left( \Vert \sqrt{\n} \dot{\mathbf{u}} \Vert _{L^2} + \Vert \nabla \dot{\mathbf{u}} \Vert _{L^2} \right) \\
&\leq \varepsilon \Vert \nabla \dot{\mathbf{u}} \Vert _{L^2}^2
+ C(\ep) ( A^2_1 + A^2_2 ).
\ea\ee
By virtue of (\ref{bjds}), (\ref{kv613}), (\ref{kv58}),
(\ref{kv09}) and H\"older's inequality, we arrive at
\be\la{gkv17}\ba
\vert J_2\vert= & \left| - \int _{\partial \Omega } G \left( ( \mathbf{u} \cdot \nabla \mathbf{u} )\cdot \nabla n \cdot \mathbf{u} \right) ds \right| \\
= & \left| \int _{\partial \Omega} \mathbf{u}^\bot \times n \cdot \nabla \mathbf{u}^i \partial_i n_j \mathbf{u}^j G ds \right| \\
= & \left| \int _{\partial \Omega} n \cdot ( \na \mathbf{u}^i \times \mathbf{u}^\bot ) \partial_i n_j \mathbf{u}^j G ds \right| \\
= & \left| \int \div \left( ( \na \mathbf{u}^i \times \mathbf{u}^\bot ) \partial_i n_j \mathbf{u}^j G \right) dx \right| \\
= & \left| \int  \na (\partial_i n_j \mathbf{u}^j G ) \cdot ( \na \mathbf{u}^i \times \mathbf{u}^\bot )
- ( \na \mathbf{u}^i \cdot \na \times \mathbf{u}^\bot ) \partial_i n_j \mathbf{u}^j G dx \right| \\
\le & C \int \vert \nabla \mathbf{u} \vert \left( \vert G \vert \vert \mathbf{u} \vert^2
+ \vert G \vert \vert \mathbf{u} \vert \vert \nabla \mathbf{u} \vert
+ \vert \mathbf{u} \vert^2 \vert \nabla G \vert \right) dx \\
\le & C\Vert \nabla \mathbf{u} \Vert _{L^4} \left( \Vert G \Vert_{L^4} \Vert \mathbf{u} \Vert _{L^4}^2
+ \Vert G \Vert _{L^4} \Vert \mathbf{u} \Vert_{L^4} \Vert \nabla \mathbf{u} \Vert _{L^4}
+ \Vert \nabla G \Vert _{L^2} \Vert \mathbf{u} \Vert_{L^8}^2\right) \\
\le & C \left(\Vert \nabla \mathbf{u} \Vert _{L^4} \Vert \nabla \mathbf{u} \Vert^2_{L^2}
+ \Vert \nabla \mathbf{u} \Vert^2_{L^4} \Vert \nabla \mathbf{u} \Vert_{L^2} \right) \Vert G \Vert _{H^1} \\
\le & C \Vert \nabla \mathbf{u} \Vert^2_{L^4} \Vert \nabla \mathbf{u} \Vert_{L^2}
\left( A_1 + A_2 \right) \\
\le & C A^2_1 + C A^2_2 + C \Vert \nabla \mathbf{u} \Vert^4_{L^4}.
\ea\ee
Similarly, we also have
\be\ba\nonumber
\vert J_3 \vert \le C A^2_1 + C A^2_2 + C \Vert \nabla \mathbf{u} \Vert^4_{L^4}.
\ea\ee
Combining this with (\ref{gkv15}), (\ref{gkv16}) and (\ref{gkv17}) leads to
\be\la{gkv19}\ba
\int _{\partial \Omega} G_t ({\dot{\mathbf{u}}}\cdot n) ds 
\leq & -\frac{d}{dt} \int_{\partial \Omega } G ( \mathbf{u} \cdot \nabla n \cdot \mathbf{u} )ds
+ \ep \Vert \nabla {\dot{\mathbf{u}}}\Vert _{L^2}^2
+ C(\ep) ( A^2_1 + A^2_2 ),
\ea\ee
where we have used the following estimate:
\be\la{gkv180}\ba
\| \na \mathbf{u} \|^4_{L^4}
& \le C \left( \| \div \mathbf{u} \|^4_{L^4} + \| \curl \mathbf{u} \|^4_{L^4} + \| \mathbf{u} \|^4_{L^4} \right) \\
& \le C \left( \| G \|^4_{L^4} + \| P-P(\ol{\n}) \|^4_{L^4}
+ \| \curl \mathbf{u} \|^4_{L^4} + \| \na \mathbf{u} \|^4_{L^2} \right) \\
& \le C \left( \| G \|^2_{L^2} \| G \|^2_{H^1} + \| P-P(\ol{\n}) \|^2_{L^2}
+ \| \curl \mathbf{u} \|^2_{L^2} \| \curl \mathbf{u} \|^2_{H^1} + \| \na \mathbf{u} \|^2_{L^2} \right) \\
& \le C \left( \| G \|^2_{H^1} + \| \curl \mathbf{u} \|^2_{H^1} + A^2_1 \right) \\
& \le C \left( A^2_1 + A^2_2 \right),
\ea\ee
owing to (\ref{ewgn01}), (\ref{ewgn02}), (\ref{dc1}), (\ref{kv58}), and (\ref{kv09}).

For the second term on the last line of (\ref{gkv12}), from $(\ref{ns})_1$ and (\ref{gw}), we deduce that
\be\ba\nonumber
\dot{G} & = G_t + \mathbf{u} \cdot \na G \\
& = \lam_t \div \mathbf{u} + (2\mu+\lam) \div \mathbf{u}_t
+ \mathbf{u} \cdot \na ( (2\mu+\lam) \div \mathbf{u} ) - P_t - \mathbf{u} \cdot \na P \\
& = (\lam_t + \mathbf{u} \cdot \na \lam ) \div \mathbf{u}
+ (2\mu+\lam) \div \dot{\mathbf{u}}
- (2\mu+\lam) \div ( \mathbf{u} \cdot \na \mathbf{u} ) \\
& \quad + (2\mu+\lam) \mathbf{u} \cdot \na \div \mathbf{u} + \ga P \div \mathbf{u}
 \\
& = -\n \lam'(\n)(\div \mathbf{u})^2 + (2\mu+\lam) \div \dot{\mathbf{u}}
- (2\mu+\lam) \p_i \mathbf{u}^j \p_j \mathbf{u}^i
+ \ga P \div \mathbf{u},
\ea\ee
which together with Young's inequality yields
\be\la{gkv109}\ba
- \int \div \dot{\mathbf{u}} \dot{G} dx
& = - \int (2\mu+\lam) (\div \dot{\mathbf{u}})^2 dx
+ \int \n \lam'(\n)(\div \mathbf{u})^2 \div \dot{\mathbf{u}} dx \\
& \quad + \int (2\mu+\lam) \p_i \mathbf{u}^j \p_j \mathbf{u}^i \div \dot{\mathbf{u}} dx
-\ga \int P \div \mathbf{u} \div \dot{\mathbf{u}} dx \\
& \le -2\mu \| \div \dot{\mathbf{u}} \|^2_{L^2} + \ep \| \na \dot{\mathbf{u}} \|^2_{L^2}
+ C(\ep) \| \na \mathbf{u} \|^2_{L^2} + C(\ep) \| \na \mathbf{u} \|^4_{L^4}.
\ea\ee
This combined with (\ref{gkv12}), (\ref{gkv19}), and (\ref{gkv180}) gives
\be\la{gkv110}\ba
I_1 & \le -\frac{d}{dt} \int_{\partial \Omega } G ( \mathbf{u} \cdot \nabla n \cdot \mathbf{u} ) ds
-2\mu \| \div \dot{\mathbf{u}} \|^2_{L^2} + 3\ep \| \na \dot{\mathbf{u}} \|^2_{L^2} + C(\ep) ( A^2_1 + A^2_2 ).
\ea\ee
For $I_2$, integrating by parts and using (\ref{kv09}), (\ref{gkv14}) and (\ref{gkv180}), we arrive at
\be\la{gkv111}\ba
I_2 & = - \mu \int \bigg( {\dot{\mathbf{u}}} \cdot \na \times \curl \mathbf{u}_t
+ {\dot{\mathbf{u}}}^j\partial _k(\mathbf{u}^k ( \na \times \curl \mathbf{u} )^j ) \bigg) dx \\
& = \mu \int_{\p \OM} \curl \mathbf{u}_t \times n \cdot \dot{\mathbf{u}} ds
- \mu \int \curl \dot{\mathbf{u}} \cdot \curl \mathbf{u}_t dx
+ \mu \int \mathbf{u} \cdot \na \dot{\mathbf{u}} \cdot (\na \times \curl \mathbf{u}) dx \\
& = \mu \int_{\p \OM} \curl \mathbf{u}_t \times n \cdot \dot{\mathbf{u}} ds
- \mu \int |\curl \dot{\mathbf{u}}|^2 dx + \mu \int \mathbf{u} \cdot \na \curl \mathbf{u} \cdot \curl \dot{\mathbf{u}} dx \\
& \quad + \mu \int \curl \dot{\mathbf{u}} \cdot (\na \mathbf{u}^i \times \p_i \mathbf{u}) dx
+ \mu \int \mathbf{u} \cdot \na \dot{\mathbf{u}} \cdot (\na \times \curl \mathbf{u}) dx \\
& \le \mu \int_{\p \OM} \curl \mathbf{u}_t \times n \cdot \dot{\mathbf{u}} ds
- \mu \int |\curl \dot{\mathbf{u}}|^2 dx + C \| \na \dot{\mathbf{u}} \|_{L^2} \| \na \mathbf{u} \|^2_{L^4} \\
& \quad + C \| \na \dot{\mathbf{u}} \|_{L^2} \| \na \curl \mathbf{u} \|_{L^3} \| \mathbf{u} \|_{L^6} \\
& \le \mu \int_{\p \OM} \curl \mathbf{u}_t \times n \cdot \dot{\mathbf{u}} ds
- \mu \| \curl \dot{\mathbf{u}} \|^2_{L^2} + \ep \Vert \nabla \dot{\mathbf{u}} \Vert_{L^2}^2
+ C(\ep) ( A^2_1 + A^2_2 ),
\ea\ee
where in the third equality we have used the following fact:
\be\ba\nonumber
\curl (\mathbf{u} \cdot \na \mathbf{u}) = \mathbf{u} \cdot \na \curl \mathbf{u} + \na \mathbf{u}^i \times \p_i \mathbf{u}.
\ea\ee

Next, we deal with the boundary term of (\ref{gkv111}).
In view of (\ref{i3}), (\ref{pt1}), (\ref{kv613}), (\ref{gkv180}) and Young's inequality, we derive
\be\ba\nonumber
\mu \int_{\p \OM} \curl \mathbf{u}_t \times n \cdot \dot{\mathbf{u}} ds
& = - \mu \int_{\p \OM} \mathbf{u}_t \cdot K \cdot \dot{\mathbf{u}} ds \\
& = - \mu \int_{\p \OM} \dot{\mathbf{u}} \cdot K \cdot \dot{\mathbf{u}} ds
+ \mu \int_{\p \OM} (\mathbf{u} \cdot \na \mathbf{u}) \cdot K \cdot \dot{\mathbf{u}} ds \\
& = - \mu \int_{\p \OM} \dot{\mathbf{u}} \cdot K \cdot \dot{\mathbf{u}} ds
+ \mu \int_{\p \OM} \mathbf{u}^\bot \times n \cdot \nabla \mathbf{u}^i (K^i \cdot \dot{\mathbf{u}}) ds \\
& = - \mu \int_{\p \OM} \dot{\mathbf{u}} \cdot K \cdot \dot{\mathbf{u}} ds
+ \mu \int_{\p \OM} n \cdot ( \na \mathbf{u}^i \times \mathbf{u}^\bot ) (K^i \cdot \dot{\mathbf{u}}) ds \\
& = -\mu \int_{\p \OM} \dot{\mathbf{u}} \cdot K \cdot \dot{\mathbf{u}} ds
+ \mu \int \div ( ( \na \mathbf{u}^i \times \mathbf{u}^\bot ) (K^i \cdot \dot{\mathbf{u}}) )dx \\
& = - \mu \int_{\p \OM} \dot{\mathbf{u}} \cdot K \cdot \dot{\mathbf{u}} ds
- \mu \int ( \na \mathbf{u}^i \cdot \na \times \mathbf{u}^\bot ) (K^i \cdot \dot{\mathbf{u}}) dx \\
& \quad + \mu \int  \na (K^i \cdot \dot{\mathbf{u}}) \cdot ( \na \mathbf{u}^i \times \mathbf{u}^\bot ) dx \\
& \le - \mu \int_{\p \OM} \dot{\mathbf{u}} \cdot K \cdot \dot{\mathbf{u}} ds
+ C \| \dot{\mathbf{u}} \|_{L^2} \| \na \mathbf{u} \|^2_{L^4}
+ C \| \na \dot{\mathbf{u}} \|_{L^2} \| \na \mathbf{u} \|^2_{L^4} \\
& \le - \mu \int_{\p \OM} \dot{\mathbf{u}} \cdot K \cdot \dot{\mathbf{u}} ds
+ \ep \| \na \dot{\mathbf{u}} \|^2_{L^2} + C(\ep) ( A^2_1 + A^2_2 ).
\ea\ee
Combining this with (\ref{gkv111}) yields
\be\la{gkv114}\ba
I_2 & \le - \mu \int_{\p \OM} \dot{\mathbf{u}} \cdot K \cdot \dot{\mathbf{u}} ds
- \mu \| \curl \dot{\mathbf{u}} \|^2_{L^2}
+ 2\ep \Vert \nabla \dot{\mathbf{u}} \Vert_{L^2}^2 + C(\ep) ( A^2_1 + A^2_2 ).
\ea\ee
From (\ref{gkv11}), (\ref{gkv110}) and (\ref{gkv114}), we conclude that
\be\la{gkv116}\ba
& \frac{1}{2} \frac{d}{dt} \left( \int \rho|\dot{\mathbf{u}}|^2 dx \right)
+ 2\mu \| \div \dot{\mathbf{u}} \|^2_{L^2}
+ \mu \| \curl \dot{\mathbf{u}}\|^2_{L^2}
+ \mu \int_{\p \OM} \dot{\mathbf{u}} \cdot K \cdot \dot{\mathbf{u}} ds \\
& \le -\frac{d}{dt} \int_{\partial \Omega } G ( \mathbf{u} \cdot \nabla n \cdot \mathbf{u} ) ds
+ 5 \varepsilon \Vert \nabla {\dot{\mathbf{u}}}\Vert _{L^2}^2
+ C(\ep) ( A^2_1 + A^2_2 ).
\ea\ee
In addition, the boundary conditions (\ref{i3}) show that
\be\ba\nonumber
\left( \dot{\mathbf{u}} + (\mathbf{u} \cdot \na n) \times \mathbf{u}^\bot \right) \cdot n = 0 \quad \text{ on } \p \OM.
\ea\ee
Then, we define $\mathbf{v} \triangleq \dot{\mathbf{u}} + (\mathbf{u} \cdot \na n) \times \mathbf{u}^\bot$, which implies that $\mathbf{v} \cdot n = 0$ on $\p \OM$.
By Lemma \ref{dltdc2}, we obtain
\be\la{gkv118}\ba
2 \mu \| D(\mathbf{v}) \|^2_{L^2} = 2 \mu \| \div \mathbf{v} \|^2_{L^2}
+ \mu \| \curl \mathbf{v} \|^2_{L^2}
- 2 \mu \int_{\p \OM} \mathbf{v} \cdot D(n) \cdot \mathbf{v} ds.
\ea\ee
Moreover, noticing that Young's inequality gives
\be\ba\nonumber
& 2 \mu \| \div \mathbf{v} \|^2_{L^2} + \mu \| \curl \mathbf{v} \|^2_{L^2} \\
& \le 2 \mu \| \div \dot{\mathbf{u}} \|^2_{L^2} + \mu \| \curl \dot{\mathbf{u}} \|^2_{L^2}
+ C \| \na \dot{\mathbf{u}} \|_{L^2} \| \na \mathbf{u} \|^2_{L^4}
+ C \| \na \mathbf{u} \|^4_{L^4} \\
& \le 2 \mu \| \div \dot{\mathbf{u}} \|^2_{L^2} + \mu \| \curl \dot{\mathbf{u}} \|^2_{L^2}
+ \ep \| \na \dot{\mathbf{u}} \|^2_{L^2} + C(\ep) \| \na \mathbf{u} \|^4_{L^4},
\ea\ee
which along with (\ref{gkv118}) implies
\be\la{gkv121}\ba
& 2 \mu \| D(\mathbf{v}) \|^2_{L^2} + 2 \mu \int_{\p \OM} \mathbf{v} \cdot D(n) \cdot \mathbf{v} ds \\
& \le 2 \mu \| \div \dot{\mathbf{u}} \|^2_{L^2} + \mu \| \curl \dot{\mathbf{u}}\|^2_{L^2}
+ \ep \| \na \dot{\mathbf{u}} \|^2_{L^2} + C(\ep) \| \na \mathbf{u} \|^4_{L^4}.
\ea\ee
On the other hand, Young's inequality and (\ref{gkv180}) ensure that
\be\la{gkv122}\ba
\mu \int_{\p \OM} \mathbf{v} \cdot K \cdot \mathbf{v} ds
\le \mu \int_{\p \OM} \dot{\mathbf{u}} \cdot K \cdot \dot{\mathbf{u}} ds
+\ep \| \na \dot{\mathbf{u}} \|^2_{L^2} + C(\ep) ( A^2_1 + A^2_2 ).
\ea\ee
Combining (\ref{gkv180}), (\ref{gkv116}), (\ref{gkv121}), and (\ref{gkv122}), we have
\be\la{gkv123}\ba
& \frac{1}{2} \frac{d}{dt} \left( \int \rho|\dot{\mathbf{u}}|^2 dx \right)
+ 2 \mu \| D(\mathbf{v}) \|^2_{L^2}
+ \mu \int_{\p \OM} \mathbf{v} \cdot ( K + 2D(n) ) \cdot \mathbf{v} ds \\
& \le -\frac{d}{dt} \int_{\partial \Omega } G ( \mathbf{u} \cdot \nabla n \cdot \mathbf{u} ) ds
+ 7 \varepsilon \Vert \nabla {\dot{\mathbf{u}}}\Vert _{L^2}^2
+ C(\ep) ( A^2_1 + A^2_2 ).
\ea\ee
Furthermore, from the definition of $\mathbf{v}$ and Lemma \ref{dltdc1}, we derive
\be\la{gkv124}\ba
\| \na \dot{\mathbf{u}} \|^2_{L^2}
& \le C \| \na \mathbf{v} \|^2_{L^2} + C \| \na \mathbf{u} \|^4_{L^4} \\
& \le C \left( 2 \| D(\mathbf{v}) \|^2_{L^2}
+ \int_{\p \OM} \mathbf{v} \cdot ( K+2D(n) ) \cdot \mathbf{v} ds \right)
+ C ( A^2_1 + A^2_2 ),
\ea\ee
which together with (\ref{gkv123}) yields
\be\ba\nonumber
& \frac{1}{2} \frac{d}{dt} \left( \int \rho|\dot{\mathbf{u}}|^2 dx \right)
+ 2 \mu \| D(\mathbf{v}) \|^2_{L^2}
+ \mu \int_{\p \OM} \mathbf{v} \cdot ( K + 2D(n) ) \cdot \mathbf{v} ds \\
& \le -\frac{d}{dt} \int_{\partial \Omega } G ( \mathbf{u} \cdot \nabla n \cdot \mathbf{u} ) ds
+ C \ep \left( 2 \mu \| D(\mathbf{v}) \|^2_{L^2}
+ \mu \int_{\p \OM} \mathbf{v} \cdot ( K+2D(n) ) \cdot \mathbf{v} ds \right) \\
& \quad + C(\ep) ( A^2_1 + A^2_2 ).
\ea\ee
Therefore, taking $\ep$ suitably small and multiplying $\si$, we arrive at
\be\la{gkv126}\ba
& \frac{1}{2} \frac{d}{dt} \left( \si \int \rho|\dot{\mathbf{u}}|^2 dx \right)
+ \mu \si \| D(\mathbf{v}) \|^2_{L^2}
+ \frac{\mu}{2} \si \int_{\p \OM} \mathbf{v} \cdot ( K + 2D(n) ) \cdot \mathbf{v} ds \\
& \le -\frac{d}{dt} \left( \si \int_{\partial \Omega } G ( \mathbf{u} \cdot \nabla n \cdot \mathbf{u} ) ds \right)
+ C ( A^2_1 + A^2_2 ),
\ea\ee
where we have used the following estimate:
\be\la{gkv127}\ba
\left| \int_{\partial \Omega} G (\mathbf{u} \cdot \nabla n \cdot \mathbf{u}) ds \right|
& \le C \| G\|_{H^1} \| \na \mathbf{u} \|^2_{L^2} \\
& \le C \left( \| \sqrt{\n} \dot{\mathbf{u}} \|_{L^2} + A_1 \right)
\| \na \mathbf{u} \|_{L^2} \\
& \le \frac{1}{4} \| \sqrt{\n} \dot{\mathbf{u}} \|^2_{L^2} + C A^2_1,
\ea\ee
due to (\ref{kv58}), (\ref{kv09}) and Young's inequality.

Integrating (\ref{gkv126}) over $(0,T)$ and using (\ref{kv09}), (\ref{yzgj2}) and (\ref{gkv127}) gives
\be\la{gkv130}\ba
& \sup_{0 \le t \le T} \si \int \rho|\dot{\mathbf{u}}|^2dx 
+ \int_0^T \si \left( \| D(\mathbf{v}) \|^2_{L^2} + \int_{\p \OM} \mathbf{v} \cdot ( K + 2D(n) ) \cdot \mathbf{v} ds \right) dt
\le C.
\ea\ee
In addition, from (\ref{gkv124}), (\ref{gkv130}) and (\ref{kv09}), we deduce that
\be\ba\nonumber
& \int_0^T \si \| \na \dot{\mathbf{u}} \|^2_{L^2} dt \\
& \le C \int_0^T \si \left( \| D(\mathbf{v}) \|^2_{L^2}
+ \int_{\p \OM} \mathbf{v} \cdot ( K + 2D(n) ) \cdot \mathbf{v} ds \right) dt
+ C \int_0^T ( A^2_1 + A^2_2 ) dt \\
& \le C,
\ea\ee
which together with (\ref{gkv130}) yields (\ref{gkv01}).

Finally, (\ref{kv05}) and (\ref{kv09}) ensure that for any $1<p<\infty$,
\be\la{gkv132}\ba
\| \na \mathbf{u} \|_{L^p} \le C + C \| \sqrt{\n} \dot{\mathbf{u}} \|_{L^2},
\ea\ee
which together with (\ref{gkv130}) implies (\ref{gkv02}) and completes the proof of Lemma \ref{gg1}.
\end{proof}

Next, using the uniform estimates (\ref{kv09}), (\ref{gkv02}), and Lemma \ref{ewgn}, we can derive the following exponential decay,
whose proof is similar to that of \cite[Proposition 4.2]{FLW}.
\begin{lemma}\la{kve}
For any $p \in [1,\infty)$, there exist positive constants
$C$ and $\alpha_0$ depending only on
$p$, $\ga$, $\beta$, $\mu$, $\| \n_0 \|_{L^\infty}$, $\| \mathbf{u}_0 \|_{H^1}$, and $K$, such that for any $1 \le t <\infty$,
\be\la{kve1}\ba
\| \n(\cdot,t)-\ol{\n_0}\|_{L^p} +
\| \na \mathbf{u}(\cdot,t) \|_{L^p} \le C e^{-\alpha_0 t}.
\ea\ee
\end{lemma}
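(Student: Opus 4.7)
The plan is to first establish an exponential decay in a base energy norm, then upgrade to general $L^p$ by interpolating against the uniform higher-integrability bounds already proved in Lemmas \ref{l9} and \ref{gg1}. The argument follows the scheme of \cite[Proposition 4.2]{FLW}, adapted to the slip boundary condition using the boundary identities (\ref{bjds}) and the weighted div-curl estimate (\ref{wdc1}).

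First, I would construct a Lyapunov-type functional of the form
\[
\Phi(t) \triangleq \int \tfrac{1}{2}\n|\mathbf{u}|^2 + H(\n)\, dx + \eta \int \n \mathbf{u}\cdot \mathcal{B}(\n-\ol{\n_0})\, dx,
\]
where $H(\n)$ is the usual pressure potential satisfying $\n H''(\n) = P'(\n)$ and normalized so that $H(\ol{\n_0})=H'(\ol{\n_0})=0$, and $\eta>0$ is a small parameter. Mass conservation and the uniform bound (\ref{kv09}) imply that $H(\n)\ge c(\n-\ol{\n_0})^2$ locally, so $\Phi(t)$ is comparable to $\|\sqrt{\n}\mathbf{u}\|_{L^2}^2 + \|\n-\ol{\n_0}\|_{L^2}^2$ modulo the small perturbation, which is itself bounded by the same quantities thanks to Lemma \ref{iod}. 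Using the momentum equation, Lemmas \ref{dltdc1}--\ref{dltdc2}, and the identity (\ref{bjds}) for the boundary terms arising from $\mathbf{u}\cdot\na\mathbf{u}$, I would derive
\[
\frac{d}{dt}\Phi + c_1\bigl(\|\na \mathbf{u}\|_{L^2}^2 + \|\n-\ol{\n_0}\|_{L^2}^2\bigr) \le 0
\]
for $t\ge 1$, by choosing $\eta$ small and absorbing the cross-terms using the uniform $L^\infty$-bound on $\n$, the Poincar\'e inequality (\ref{pt1}), and the bound $\|\na \mathbf{u}\|_{L^4}\le C$ from (\ref{gkv02}) to control the nonlinear contributions. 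The Bogovskii test function is precisely what turns $\int P(\n)\div \mathbf{u}\,dx$ type terms into a coercive $\|\n-\ol{\n_0}\|_{L^2}^2$ on the left-hand side. The boundary contribution from $\curl \mathbf{u}\times n = -K\mathbf{u}$ gives the extra term $\int_{\p\OM}\mathbf{u}\cdot K\cdot\mathbf{u}\,ds$, which is nonnegative since $K$ is positive semi-definite.

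Combining the preceding differential inequality with $\Phi(t)\le C(\|\na\mathbf{u}\|_{L^2}^2+\|\n-\ol{\n_0}\|_{L^2}^2)$ (using again Poincar\'e and smallness of $\eta$) gives
\[
\frac{d}{dt}\Phi + c_2\,\Phi \le 0 \quad\text{for } t\ge 1,
\]
and Gronwall yields $\Phi(t)\le C e^{-c_2 t}$, hence $\|\n-\ol{\n_0}\|_{L^2}+\|\sqrt{\n}\mathbf{u}\|_{L^2}\le C e^{-c_2 t/2}$ for $t\ge 1$. Applying Poincar\'e's inequality (\ref{pt1}) together with the standard energy identity (\ref{kv11}), which gives $\frac{d}{dt}(\tfrac12\|\sqrt{\n}\mathbf{u}\|_{L^2}^2+\cdots) + c\|\na\mathbf{u}\|_{L^2}^2 + c\int_{\p\OM}\mathbf{u}\cdot K\cdot\mathbf{u}\,ds\le 0$, and integrating on $[t,t+1]$, I can also propagate the decay to $\|\na \mathbf{u}(\cdot,t)\|_{L^2}\le Ce^{-\alpha t}$.

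Finally, to upgrade from $L^2$ to general $L^p$, I would interpolate. For the density, the bound $\|\n\|_{L^\infty}\le C$ from (\ref{kv09}) yields
\[
\|\n-\ol{\n_0}\|_{L^p} \le \|\n-\ol{\n_0}\|_{L^2}^{2/p}\|\n-\ol{\n_0}\|_{L^\infty}^{1-2/p}\le Ce^{-\alpha_0 t},
\]
with $\alpha_0 = c_2/p$. For the velocity gradient, pick any $q>p$ and use (\ref{gkv02}) to bound $\|\na\mathbf{u}\|_{L^q}\le C$, then
\[
\|\na \mathbf{u}\|_{L^p} \le C\|\na \mathbf{u}\|_{L^2}^{\theta}\|\na \mathbf{u}\|_{L^q}^{1-\theta} \le Ce^{-\alpha_0 t}
\]
for an appropriate $\theta \in (0,1]$ and adjusted $\alpha_0>0$. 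The main obstacle is the first step: producing the closed differential inequality $\Phi'+c_2\Phi\le 0$, because the Bogovskii test function introduces nonlocal couplings and boundary integrals that must be tamed by the slip boundary identities and by absorbing higher-order terms against the already-known $L^\infty$ bound on $\n$ and the $L^p$ bound on $\na\mathbf{u}$. Once that inequality is established, the remaining steps are routine interpolations.
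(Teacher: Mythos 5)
Your plan follows the same Bogovskii-perturbed energy (Lyapunov) argument that the paper invokes by reference to \cite[Proposition 4.2]{FLW}; indeed, the paper introduces the operator $\mathcal{B}$ in Lemma \ref{iod} specifically ``to obtain the decay estimate of density,'' which confirms this is the intended route, and the final interpolations against (\ref{kv09}) and (\ref{gkv02}) are exactly the ingredients the paper cites. One step, however, is not closed as written: you obtain exponential decay of $\Phi(t)$, hence of $\|\n-\ol{\n_0}\|_{L^2}$, $\|\sqrt{\n}\mathbf{u}\|_{L^2}$, and of the time-average $\int_t^{t+1}\|\na\mathbf{u}\|_{L^2}^2\,ds$, but ``integrating on $[t,t+1]$'' does not by itself give pointwise decay of $\|\na\mathbf{u}(\cdot,t)\|_{L^2}$. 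You need one further ingredient. Either combine the averaged decay with the material-derivative inequality (\ref{kv614}), which after Lemma \ref{l9} makes $R_T$ and $A_1$ uniformly bounded and reduces to $\frac{d}{dt}A_3^2 + A_2^2 \le C A_1^2$, and then use a mean-value argument to pick a good time $s\in[t,t+1]$ and propagate the bound forward; or, more cleanly, enlarge the Lyapunov functional to $\tilde{\Phi} = \Phi + \tilde{\eta} A_3^2$ with $\tilde{\eta}$ small, so that (\ref{kv614}) supplies the requisite dissipation, $\tilde{\Phi}' + c\tilde{\Phi} \le 0$ holds, and $\tilde{\Phi} \gtrsim \|\na\mathbf{u}\|_{L^2}^2$ delivers the gradient decay directly. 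Either fix is routine, and once in place the remaining interpolations you outline are correct.
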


\begin{lemma}\la{gkv2}
There exists a positive constant $C$ depending only on 
$T$, $q$, $\ga$, $\beta$, $\mu$,
$\| \mathbf{u}_0 \|_{H^1}$, $\| \rho_0 \|_{W^{1,q}}$, and $K$, such that
\be\la{gkv201}\ba
&\sup_{0\le t\le T} \left( \| \n \|_{W^{1,q}} + t \| u \|^2_{H^2} \right) \\
& + \int_0^T \left( \|\nabla^2 u\|^{(q+1)/q}_{L^q}
+ t \|\nabla^2 u\|_{L^q}^2+t\| u_t\|_{H^1}^2 \right) dt\le C.
\ea\ee
\end{lemma}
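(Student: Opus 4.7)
The plan is to close a coupled transport--elliptic estimate, using the Beale-Kato-Majda inequality to break the circular dependence between the $W^{1,q}$ bound on $\n$ and the $W^{2,q}$ bound on $\mathbf{u}$. First I would apply $\na$ to $(\ref{ns})_1$ and take the $L^q$ energy estimate for $|\na\n|$, which after integration by parts gives
$$\frac{d}{dt}\|\na \n\|_{L^q} \le C\bigl(1+\|\na \mathbf{u}\|_{L^\infty}\bigr)\|\na \n\|_{L^q} + C\|\na^2 \mathbf{u}\|_{L^q},$$
the last term arising from $\n\na\div\mathbf{u}$ in the differentiated continuity equation.

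For the elliptic side, I would exploit $\n\dot{\mathbf{u}} = \na G - \mu\na\times\curl \mathbf{u}$ from (\ref{kv54}), the Neumann problem (\ref{kv55}) for $G$, and the div-curl estimate of Lemma \ref{dc}, to obtain
$$\|\na^2 \mathbf{u}\|_{L^q} \le C\|\n \dot{\mathbf{u}}\|_{L^q} + C\bigl(1+\|\na \n\|_{L^q}\bigr),$$
where the $\na\n$-term comes from $\na\div\mathbf{u} = \na G/(2\mu+\lambda) + \text{l.o.t.}$, bounded via the uniform $\|\n\|_{L^\infty}$ of Lemma \ref{l9}. The Beale-Kato-Majda inequality of Lemma \ref{bkm} then yields
$$\|\na \mathbf{u}\|_{L^\infty} \le C\bigl(\|G\|_{L^\infty}+\|\curl\mathbf{u}\|_{L^\infty}+1\bigr)\log\bigl(e+\|\na^2\mathbf{u}\|_{L^q}\bigr) + C,$$
and the $L^\infty$ norms on the right are controlled via the axisymmetric Sobolev embedding $W^{1,r} \hookrightarrow L^\infty$ (for $r>2$) of Lemma \ref{ewgn} together with the elliptic bounds on $\na G$ and $\na\curl\mathbf{u}$.

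The main obstacle will be closing this loop near $t=0$, where the lack of a compatibility condition allows $\|\n\dot{\mathbf{u}}\|_{L^q}$ to be singular. To handle this I would use the time-weighted bounds of Lemma \ref{gg1}, namely $\sup_{0\le t\le T}\si\|\sqrt{\n}\dot{\mathbf{u}}\|_{L^2}^2 \le C$ and $\int_0^T\si\|\na\dot{\mathbf{u}}\|_{L^2}^2\,dt \le C$, combined with the Gagliardo-Nirenberg interpolation from Lemma \ref{ewgn},
$$\|\n\dot{\mathbf{u}}\|_{L^q} \le C\|\sqrt{\n}\dot{\mathbf{u}}\|_{L^2}^{2/q}\|\na\dot{\mathbf{u}}\|_{L^2}^{1-2/q} + C\|\sqrt{\n}\dot{\mathbf{u}}\|_{L^2}.$$
A H\"older argument in $t$ with weight $\si$ then produces $\|\n\dot{\mathbf{u}}\|_{L^q} \in L^{(q+1)/q}(0,T)$. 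Substituting this into the transport estimate gives a logarithmic Gronwall inequality for $\log(e+\|\na\n\|_{L^q})$, which the Zlotnik-type Lemma \ref{zli} resolves to give $\sup_t\|\na\n\|_{L^q} \le C$. Plugging this back into the elliptic estimate then produces $\int_0^T\|\na^2\mathbf{u}\|_{L^q}^{(q+1)/q}\,dt \le C$.

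Finally, the time-weighted bounds follow by multiplying the elliptic estimate by $\si$ and applying $\sup_t\si\|\sqrt{\n}\dot{\mathbf{u}}\|_{L^2}^2 \le C$, which yields $\sup_t\si\|\mathbf{u}\|_{H^2}^2 \le C$ and $\int_0^T\si\|\na^2\mathbf{u}\|_{L^q}^2\,dt \le C$. The bound on $\int_0^T t\|\mathbf{u}_t\|_{H^1}^2\,dt$ follows from the identity $\mathbf{u}_t = \dot{\mathbf{u}} - \mathbf{u}\cdot\na\mathbf{u}$, the bounds $\|\mathbf{u}\|_{H^1} \le C$ from Lemma \ref{l9} and $\sup_t\si\|\mathbf{u}\|_{H^2}^2 \le C$ above, and the integrability $\int_0^T\si\|\na\dot{\mathbf{u}}\|_{L^2}^2\,dt \le C$.
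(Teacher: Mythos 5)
Your proposal follows essentially the same route as the paper's proof: a transport estimate for $\na\n$ in $L^q$ coupled with elliptic estimates for $\na^2\mathbf{u}$ and $\na G$, closed via the Beale--Kato--Majda inequality and the time-weighted dissipation from Lemma~\ref{gg1}, and then the weighted bounds on $\si\|\mathbf{u}\|_{H^2}^2$, $\int\si\|\na^2\mathbf{u}\|_{L^q}^2$ and $\int\si\|\mathbf{u}_t\|_{H^1}^2$ read off from the same ingredients. Two small inaccuracies are worth flagging. First, the elliptic estimate you wrote,
$\|\na^2\mathbf{u}\|_{L^q}\le C\|\n\dot{\mathbf{u}}\|_{L^q}+C(1+\|\na\n\|_{L^q})$,
is not quite right as stated: when expanding $\na\div\mathbf{u}=\na\bigl(\tfrac{G+P-P(\ol{\n})}{2\mu+\lam}\bigr)$, the terms involving $\lam'(\n)\na\n$ carry a factor of $G$ whose $L^\infty$-norm is itself controlled by $C(1+\|\n\dot{\mathbf{u}}\|_{L^q})$, so the correct bound has the cross term $C(1+\|\n\dot{\mathbf{u}}\|_{L^q})\|\na\n\|_{L^q}+C\|\n\dot{\mathbf{u}}\|_{L^q}$ (the paper's (\ref{gkv25})). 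This does not break your argument, since after dividing by $e+\|\na\n\|_{L^q}$ the cross term feeds into the $(1+\|\n\dot{\mathbf{u}}\|_{L^q})\log(e+\cdot)$ coefficient, which is still integrable in time. Second, the closing tool is the standard Gr\"onwall inequality for the linear ODE inequality satisfied by $\log(e+\|\na\n\|_{L^q})$, not Lemma~\ref{zli}; Zlotnik's lemma is for the structure $y'=g(y)+h'$ with $g(\infty)=-\infty$, which is not what you have here.
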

\begin{proof}
First, we define $\Phi = (\Phi^1,\Phi^2,\Phi^3)$
with $\Phi^i \triangleq (2\mu+\lam(\n)) \p_i \n$ $(i=1,2,3)$.
By virtue of $(\ref{ns})_1$, we find that $\Phi^{i}$ satisfies
\be\la{gkv21}\ba
\p_t \Phi^i + (\mathbf{u} \cdot \na) \Phi^i
+ (2\mu+\lam(\n)) \na \n \cdot \p_i \mathbf{u} + \n \p_i G + \n \p_i P
+ \Phi^i \div \mathbf{u} = 0.
\ea\ee
Then, multiplying (\ref{gkv21}) by $|\Phi|^{q-2} \Phi^i$ and
integrating by parts over $\OM$, we derive by (\ref{i3})
\be\la{gkv22}\ba
\frac{d}{dt} \| \Phi \|_{L^q}
& \le C ( 1 + \| \na \mathbf{u} \|_{L^\infty} ) \| \Phi \|_{L^q}
+ C \| \na G \|_{L^q}.
\ea\ee
In addition, it follows from (\ref{kv56}), (\ref{kv57}), (\ref{gkv132}), and Sobolev embedding that
\be\la{gkv23}\ba
& \| \div \mathbf{u} \|_{L^\infty}+\| \curl \mathbf{u} \|_{L^\infty} \\
& \le C \left( \| G \|_{L^\infty} + \| P-P(\ol{\n}) \|_{L^\infty} \right)
+ \| \curl \mathbf{u} \|_{L^\infty} \\
& \le C + C \left( \| G \|_{L^2} + \| \na G \|_{L^q}
+ \| \curl \mathbf{u} \|_{L^2}+\| \na \curl \mathbf{u} \|_{L^q} 
+ \| \na \mathbf{u} \|_{L^q} \right) \\
& \le C \left( 1 + \| \n \dot{\mathbf{u}} \|_{L^q} \right).
\ea\ee
By (\ref{dc1}), (\ref{gkv132}), (\ref{gkv23}), (\ref{kv09}), (\ref{kv56}), and (\ref{kv57}), we obtain that for any $p\in[2,q]$,
\be\la{gkv25}\ba
\|\na^2 \mathbf{u}\|_{L^p}
& \le C \left( \| \div \mathbf{u} \|_{W^{1,p}}
+ \| \curl \mathbf{u} \|_{W^{1,p}} + \| \mathbf{u} \|_{L^p} \right) \\
& \le C \left( \| \na \mathbf{u} \|_{L^p}
+ \| \na \div \mathbf{u} \|_{L^p}
+ \| \na \curl \mathbf{u} \|_{L^p} \right) \\
& \le C + C \left( \| \na ( (2\mu+\lam) \div \mathbf{u} ) \|_{L^p}
+ \| \div \mathbf{u} \|_{L^{\frac{pq}{q-p}}} \| \na \n \|_{L^q}
+ \| \n \dot{\mathbf{u}} \|_{L^p} \right) \\
& \le C\left( 1+\| \div \mathbf{u} \|_{L^{\frac{pq}{q-p}}} \right) \| \na \n \|_{L^q}
+ C \left( \| \na G \|_{L^p}+\| \n \dot{\mathbf{u}} \|_{L^p} \right) \\
& \le C \left( 1+\| \n \dot{\mathbf{u}} \|_{L^q} \right) \| \na \n \|_{L^q}
+ C \| \n \dot{\mathbf{u}} \|_{L^p}.
\ea\ee
Combining this with (\ref{gkv23}), (\ref{kv09}) and Lemma \ref{bkm} yields
\be\ba\la{gkv24}
\|\na \mathbf{u}\|_{L^\infty} 
& \le C \left( \|\div \mathbf{u} \|_{L^\infty}
+ \|\curl \mathbf{u}\|_{L^\infty} \right) \log \left(e+ \|\na^2 \mathbf{u}\|_{L^q} \right)+ C\|\na \mathbf{u}\|_{L^2}+C \\
& \le C \left( 1 + \| \n \dot{\mathbf{u}} \|_{L^q} \right)
\log \left(e + \| \na \n \|_{L^q} + \| \n \dot{\mathbf{u}} \|_{L^q}
+ \| \n \dot{\mathbf{u}} \|_{L^q} \| \na \n \|_{L^q} \right) \\
& \le C \left( 1 + \| \n \dot{\mathbf{u}} \|_{L^q} \right)
\log \left(e + \| \na \n \|_{L^q} \right)
+ C \| \n \dot{\mathbf{u}} \|^{1+1/q}_{L^q}.
\ea\ee
Moreover, with the definition of $\Phi$ and (\ref{kv09}), we have
\be\la{gkv26}\ba
2\mu \| \na \n \|_{L^q} \le \| \Phi \|_{L^q} \le C \| \na \n \|_{L^q},
\ea\ee
which together with (\ref{gkv22}) and (\ref{gkv24}) implies
\be\la{gkv27}\ba
\frac{d}{dt} \log( e + \| \Phi \|_{L^q} )
& \le C \left( 1 + \| \n \dot{\mathbf{u}} \|_{L^q} \right)
\log \left(e + \| \Phi \|_{L^q} \right)
+ C \| \n \dot{\mathbf{u}} \|^{1+1/q}_{L^q}.
\ea\ee
Meanwhile, we deduce from (\ref{ewgn01}), (\ref{pt1}) and H\"{o}lder's inequality that
\be\ba\nonumber
\| \rho \dot{\mathbf{u}} \|_{L^q} 
& \le C\| \rho \dot{\mathbf{u}} \|_{L^2}^{2(q-1)/(q^2-2)}
\| \dot{\mathbf{u}} \|_{L^{q^2}}^{q(q-2)/(q^2-2)} \\
& \le C\| \rho \dot{\mathbf{u}} \|_{L^2}^{2(q-1)/(q^2-2)}
\| \dot{\mathbf{u}} \|_{H^1}^{q(q-2)/(q^2-2)} \\
& \le C\| \rho^{1/2} \dot{\mathbf{u}} \|_{L^2}
+ C\| \rho \dot{\mathbf{u}} \|_{L^2}^{2(q-1)/(q^2-2)}
\| \na \dot{\mathbf{u}} \|_{L^2}^{q(q-2)/(q^2-2)},
\ea\ee
which together with (\ref{gkv01}) and (\ref{pt1}) gives
\be\la{gkv29}\ba
&\int_0^T \left(\| \rho \dot{\mathbf{u}} \|^{1+1 /q}_{L^q}
+ t \| \dot{\mathbf{u}} \|^2_{H^1} \right) dt \\
&\le C+C \int_0^T\left( \| \rho^{1/2} \dot{\mathbf{u}} \|_{L^2}^2
+ t\| \na \dot{\mathbf{u}} \|_{L^2}^2
+ t^{-(q^3-q^2-2p)/(q^3-q^2-2p+2)} \right) dt \\ 
&\le C.
\ea\ee
Applying Gr\"onwall's inequality to (\ref{gkv27}) and using (\ref{gkv26}), (\ref{gkv29}), we arrive at
\be\la{gkv210}\ba
\sup_{0 \le t \le T} \| \n \|_{W^{1,q}} \le C,
\ea\ee
which together with (\ref{gkv01}), (\ref{gkv132}), (\ref{gkv25}) and (\ref{gkv29}) leads to
\be\la{gkv211}\ba
\sup_{0\le t\le T} t \| \na^2 \mathbf{u} \|^2_{L^2}
+ \int_0^T \left( \| \nabla^2 \mathbf{u} \|^{(q+1)/q}_{L^q}
+ t \|\nabla^2 \mathbf{u} \|_{L^q}^2 \right) dt
\le C.
\ea\ee
Finally, we apply (\ref{pt1}), (\ref{kv09}), (\ref{gkv01}),
(\ref{gkv180}), (\ref{gkv211}), and H\"older's inequality to derive that
\be\ba\nonumber
\int_0^T t \| \mathbf{u}_t \|^2_{H^1} dt
& \le C \int_0^T t \left( \| \sqrt{\n} \mathbf{u}_t \|^2_{L^2}
+ \| \na \mathbf{u}_t \|^2_{L^2} \right) dt \\
& \le C \int_0^T t \left( \| \sqrt{\n} \dot{\mathbf{u}} \|^2_{L^2}
+ \| \mathbf{u} \cdot \na \mathbf{u} \|^2_{L^2}
+ \| \na \dot{\mathbf{u}} \|^2_{L^2}
+ \| \na ( \mathbf{u} \cdot \na \mathbf{u} )\|_{L^2}^2 \right) dt \\
& \le C + C \int_0^T t \left( \| \mathbf{u} \|_{L^4}^2 \| \na \mathbf{u} \|_{L^4}^2
+ \| \mathbf{u} \|_{L^{2q/(q-2)}}^2\|\nabla^2 \mathbf{u} \|_{L^q}^2
+ \| \nabla \mathbf{u} \|_{L^4}^4 \right) dt \\
& \le C,
\ea\ee
which together with (\ref{gkv210}) and (\ref{gkv211}) yields (\ref{gkv201})
and completes the proof of Lemma \ref{gkv2}.
\end{proof}

\section{Proofs of Theorems \ref{th0}--\ref{th3}}
With all the a priori estimates established in Sections 3 and 4, we now prove the main results of this paper.
In fact, the proofs of Theorems \ref{th0}--\ref{th3} are routine; we only sketch them here and refer to \cite{FLW,HL2,VK,CL} for complete details.

We first state the global existence of
strong solution to problem (\ref{ns})--(\ref{i3}) provided that (\ref{bg}) holds
and $(\n_0,\mathbf{m}_0)$ satisfies (\ref{lct01}).
The proof follows that of \cite[Proposition 5.1]{HL2} with minor modifications.
\begin{proposition}\la{51}
Assume that $(\ref{bg})$ holds and that the initial data $\left( \n _0,\mathbf{m}_0 \right)$ satisfy $(\ref{lct01})$.
Then the problem $(\ref{ns})-(\ref{i3})$ admits a unique
strong solution $(\n,\mathbf{u})$ within the axisymmetric class
in $\OM \times (0,\infty)$ satisfying $(\ref{lct1})$ and $(\ref{lct2})$ for any $0<T<\infty$.
Moreover, for $q>3$, $(\n,\mathbf{u})$ satisfies $(\ref{gkv201})$ with some positive constant $C$ depending only on
$T$, $q$, $\ga$, $\beta$, $\mu$, $\| \mathbf{u}_0 \|_{H^1}$, $\| \rho_0 \|_{W^{1,q}}$, and $K$.
\end{proposition}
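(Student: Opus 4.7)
The plan is to deploy a standard continuation argument that couples the local existence theorem (Lemma \ref{lct}) with the a priori bounds derived in Sections 3 and 4. By Lemma \ref{lct}, there exists $T_0>0$ and a unique local strong solution $(\n,\mathbf{u})$ on $\OM\times(0,T_0]$ satisfying (\ref{lct1}) and (\ref{lct2}); Lemma \ref{dcx} guarantees that this solution is automatically axisymmetric and periodic in $x_3$. Define
\[
T^* := \sup\big\{T>0 : (\n,\mathbf{u})\text{ is a strong axisymmetric solution of }(\ref{ns})\text{--}(\ref{i3})\text{ on }\OM\times(0,T]\text{ with }(\ref{lct1})\text{--}(\ref{lct2})\big\},
\]
so it suffices to prove $T^*=+\infty$.

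Assume for contradiction that $T^*<+\infty$. The a priori estimates of Section 3 (culminating in Lemma \ref{l9}) provide a uniform upper bound $\sup_{0\le t<T^*}\|\n(\cdot,t)\|_{L^\infty}\le C$ together with (\ref{kv09}), where $C$ depends only on $\mu,\ga,\beta,K,\|\n_0\|_{L^\infty}$ and $\|\mathbf{u}_0\|_{H^1}$. The higher-order estimates in Lemmas \ref{gg1} and \ref{gkv2} then yield (\ref{gkv201}) uniformly on $[0,T^*)$ with the asserted dependence of the constant. In particular, combining (\ref{gkv23}) with (\ref{gkv29}) gives $\|\div \mathbf{u}\|_{L^\infty(\OM)}\in L^{1+1/q}(0,T^*)\subset L^1(0,T^*)$. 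Applying the method of characteristics to $(\ref{ns})_1$, one then obtains the positive lower bound
\[
\inf_{(x,t)\in\OM\times[0,T^*)}\n(x,t)\ge \Big(\inf_{x\in\OM}\n_0(x)\Big)\exp\bigg(-\int_0^{T^*}\|\div \mathbf{u}(\cdot,s)\|_{L^\infty}\,ds\bigg)>0,
\]
so that the non-vacuum assumption (\ref{lct2}) persists up to $T^*$.

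Consequently, $(\n(\cdot,T^*),\mathbf{u}(\cdot,T^*))$ meets the hypotheses (\ref{lct01}) of Lemma \ref{lct}, so the local existence theorem may be applied at $t=T^*$ to extend $(\n,\mathbf{u})$ to a strictly larger time interval. This contradicts the maximality of $T^*$, and thus $T^*=+\infty$. Uniqueness within the class (\ref{lct1})--(\ref{lct2}) is standard and follows from Gr\"onwall's inequality applied to the equations satisfied by the difference of two solutions. The only subtle point is to confirm that every bound invoked in the continuation step depends on the initial data solely through $\|\n_0\|_{L^\infty}$, $\|\n_0\|_{W^{1,q}}$, $\|\mathbf{u}_0\|_{H^1}$ and the parameters $\mu,\ga,\beta,K,q,T$, and \emph{not} on $\inf_{x\in\OM}\n_0(x)$; this is precisely the form in which Lemmas \ref{l1}--\ref{gkv2} have been recorded, so the constants do not degenerate as the extension procedure is iterated.
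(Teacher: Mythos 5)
The paper provides no explicit proof of Proposition \ref{51}; it simply cites the analogous result in Huang--Li \cite{HL2} and states that the argument carries over with ``slight modifications.'' Your proposal is a detailed rendition of exactly that standard continuation strategy, and the skeleton---local existence via Lemma \ref{lct}, axisymmetry preservation via Lemma \ref{dcx}, a maximal-time definition, the a priori bounds of Sections 3--4, the persistence of the density lower bound via the $L^1_t L^\infty_x$ control of $\div\mathbf{u}$ gained from (\ref{gkv23}) and (\ref{gkv29}), and a contradiction with maximality---is the correct one and matches what the cited reference does.

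One step is left implicit in your argument and deserves to be named. To re-apply Lemma \ref{lct} at $t=T^*$ you need $(\n(\cdot,T^*),\mathbf{u}(\cdot,T^*))$ to satisfy the hypotheses (\ref{lct01}), which include $\n(\cdot,T^*)\in H^2$ and $\mathbf{u}(\cdot,T^*)\in H^2\cap\tilde H^1$. Your invocation of Lemmas \ref{gg1} and \ref{gkv2} only yields $\sup_t\|\n\|_{W^{1,q}}$ and $\sup_t\, t\|\mathbf{u}\|_{H^2}^2$ bounded (the latter fine for $t$ bounded away from zero), but no direct control of $\|\n\|_{H^2}$ as $t\to T^*$. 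This is the point where the positive lower bound for $\n$ that you established must actually be exploited: once $\n$ is bounded above and below and $\|\n\|_{W^{1,q}}$, $\|\mathbf{u}\|_{H^2}$ are controlled, one bootstraps through the momentum equation (which is now uniformly elliptic in $\mathbf{u}$) to obtain $\mathbf{u}\in L^2_{loc}(H^3)$, and then differentiates the continuity equation twice to propagate $\|\n\|_{H^2}$ with constants depending on $T^*$ but finite. This bootstrap is precisely what makes the estimates in (\ref{lct1}) close at $t=T^*$, and it is implicitly part of the ``slight modifications'' the paper attributes to \cite{HL2}. Adding a sentence acknowledging this higher-order bootstrap would make the continuation argument airtight; as written the claim that $(\n(\cdot,T^*),\mathbf{u}(\cdot,T^*))$ ``meets the hypotheses (\ref{lct01})'' is asserted rather than deduced from the bounds you quote.
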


Proof of Theorem \ref{th0}.
Let $(\n_0,\mathbf{m}_0)$ be the initial data in Theorem \ref{th0}, satisfying (\ref{ssol1}).
By standard approximation (see \cite{EL}), there exists a sequence of functions
$(\hat{\n}^{\de}_0,\hat{\mathbf{u}}^\de_0)\in C^\infty$
that are axisymmetric and periodic in $x_3$ with period 1, such that
\be\ba\nonumber
\lim_{\de \to 0} \left( \| \hat{\n}^{\de}_0 - \n_0 \|_{W^{1,q}}
+ \| \hat{\mathbf{u}}^\de_0 - \mathbf{u}_0 \|_{H^1} \right)=0.
\ea\ee
However, $\hat{\mathbf{u}}^\de_0$ may not satisfy the slip boundary conditions.
To address this, we define $\mathbf{u}^\de_0$ as the unique smooth solution to the following elliptic equation:
\be\la{pp1}\ba
\begin{cases}
\Delta \mathbf{u}^\de_0
= \Delta \hat{\mathbf{u}}^\de_0 &\, \text{ in } \OM, \\
\mathbf{u}^\de_0 \cdot n = 0, \, \curl \mathbf{u}^\de_0 \times n 
= -K \mathbf{u}^\de_0 & \, \text{ on } \p \OM.
\end{cases}
\ea\ee
Define $\n^\de_0=\hat{\n}^{\de}_0 + \de$ and $\mathbf{m}^\de_0 = \n^\de_0 \mathbf{u}^\de_0$.
The standard arguments (see \cite{L1}) yield
\be\ba\nonumber
\lim_{\de \to 0} \left( \| \n^{\de}_0 - \n_0 \|_{W^{1,q}}
+ \| \mathbf{u}^\de_0 - \mathbf{u}_0 \|_{H^1} \right)=0.
\ea\ee

By Proposition \ref{51}, the problem (\ref{ns})--(\ref{i3}), in which the initial data $(\n_0,\mathbf{m}_0)$ are replaced by $(\n_0^\de,\mathbf{m}^\de_0)$,
admits a unique global strong solution  $(\n^\de,\mathbf{u}^\de)$
satisfying (\ref{gkv201}) for any $0<T<\infty$
with some positive constant $C$ independent of $\de$.
Then, letting $\de\rightarrow 0$
and using standard compactness arguments (see \cite{HL2,LZZ,P,VK}), we obtain that the problem  (\ref{ns})--(\ref{i3})
has a global strong solution $(\n,\mathbf{u})$ satisfying (\ref{ssol2}).
Moreover, (\ref{kve1}) implies that $(\n,\mathbf{u})$ satisfies the estimate (\ref{ed}).
The uniqueness of the solution $(\n,\mathbf{u})$ satisfying (\ref{ssol2})
follows from arguments analogous to those in \cite{Ge}.
This completes the proof of Theorem \ref{th0}.

Using the compactness techniques developed in \cite{HL2,VK},
Theorem \ref{th1} can be proved in the same manner as Theorem \ref{th0},
and we omit the details.

Proof of Theorem \ref{th3}. 
The proof of Theorem \ref{th3} is similar to that of \cite[Theorem 1.2]{CL} and is also omitted.

\bigskip

\noindent\textbf{Data availability.} No data was used for the research described in the article.

\bigskip

\noindent\textbf{Conflict of interest.} The author declares no conflict of interest.

\begin{thebibliography}{99}

\bibitem{AACG}{\sc P. Acevedo~Tapia et al.}, 
{\em Stokes and Navier-Stokes equations with Navier boundary conditions}, 
J. Differential Equations {\bf 285} (2021), 258--320.

\bibitem{AJ}{\sc J. Aramaki}, 
{\em$L^p$ theory for the div-curl system}, 
Int. J. Math. Anal. (Ruse) {\bf 8} (2014), no.~5-8, 259--271.

\bibitem{BKM}{\sc J.~T. Beale, T. Kato and A.~J. Majda}, 
{\em Remarks on the breakdown of smooth solutions for the $3$-D Euler equations}, 
Comm. Math. Phys. {\bf 94} (1984), no.~1, 61--66.

\bibitem{CL}{\sc G.~C. Cai and J. Li}, 
{\em Existence and exponential growth of global classical solutions to the compressible Navier-Stokes equations with slip boundary conditions in 3D bounded domains}, 
Indiana Univ. Math. J. {\bf 72} (2023), no.~6, 2491--2546.

\bibitem{EL} {\sc L.~C. Evans}, {\em Partial differential equations}, second edition, 
Graduate Studies in Mathematics, 19, Amer. Math. Soc., Providence, RI, 2010.

\bibitem{FJL}{\sc X. Fan, S. Jiang and J. Li}, 
{\em The Compressible Navier-Stokes Equations on the Multi-Connected Domains}, 
arXiv:2407.01901.

\bibitem{FLL}{\sc X. Fan, J. X. Li and J. Li}, 
{\em Global existence of strong and weak solutions to 2D compressible Navier-Stokes system in bounded domains with large data and vacuum}, 
Arch. Ration. Mech. Anal. {\bf 245} (2022), no.~1, 239--278.

\bibitem{FLW}{\sc X. Fan, J. Li and X. Wang}, 
{\em Large-Time Behavior of the 2D Compressible Navier-Stokes System in Bounded Domains with Large Data and Vacuum},
arXiv:2310.15520.

\bibitem{F} {\sc E. Feireisl}, {\em
Dynamics of Viscous Compressible Fluids}, Oxford Lecture Series in Mathematics and
its Applications vol. 26, Oxford University Press, Oxford, 2004.

\bibitem{FNP}{\sc E. Feireisl, A. Novotn\'y{} and H. Petzeltov\'a}, 
{\em On the existence of globally defined weak solutions to the Navier-Stokes equations}, 
J. Math. Fluid Mech. {\bf 3} (2001), no.~4, 358--392.

\bibitem{FMN}{\sc H. Frid, D.~R. Marroquin and J.~F.~C. Nariyoshi}, 
{\em Global smooth solutions with large data for a system modeling aurora type phenomena in the 2-torus}, 
SIAM J. Math. Anal. {\bf 53} (2021), no.~1, 1122--1167.

\bibitem{Ge}{\sc P. Germain}, 
{\em Weak-strong uniqueness for the isentropic compressible Navier-Stokes system},
 J. Math. Fluid Mech. {\bf 13} (2011), no.~1, 137--146.

\bibitem{GT} {\sc D. Gilbarg and N.~S. Trudinger}, {\em Elliptic partial differential equations of second order}, Springer, 2001.

\bibitem{GWX}{\sc Z. Guo, Y. Wang and C. Xie}, 
{\em Global strong solutions to the inhomogeneous incompressible Navier-Stokes system in the exterior of a cylinder},
SIAM J. Math. Anal. {\bf 53} (2021), no.~6, 6804--6821.

\bibitem{H1}{\sc D. Hoff}, {\em Global solutions of the Navier-Stokes equations for multidimensional compressible flow with discontinuous initial data},
J. Differential Equations {\bf 120} (1995), no.~1, 215--254.

\bibitem{H3}{\sc D. Hoff}, 
{\em Compressible flow in a half-space with Navier boundary conditions}, 
J. Math. Fluid Mech. {\bf 7} (2005), no.~3, 315--338.

\bibitem{HL2}{\sc X.-D. Huang and J. Li},
{\em Existence and blowup behavior of global strong solutions to the two-dimensional barotrpic compressible Navier-Stokes system with vacuum and large initial data},
J. Math. Pures Appl. (9) {\bf 106} (2016), no.~1, 123--154.

\bibitem{HL}{\sc X.-D. Huang and J. Li}, 
{\em Global classical and weak solutions to the three-dimensional full compressible Navier-Stokes system with vacuum and large oscillations},
Arch. Ration. Mech. Anal. {\bf 227} (2018), no.~3, 995--1059.

\bibitem{HL3}{\sc X.-D. Huang and J. Li},
{\em Global well-posedness of classical solutions to the Cauchy problem of two-dimensional barotropic compressible Navier-Stokes system with vacuum and large initial data},
SIAM J. Math. Anal. {\bf 54} (2022), no.~3, 3192--3214.

\bibitem{HLX2}{\sc X.-D. Huang, J. Li and Z. Xin}, 
{\em Global well-posedness of classical solutions with large oscillations and vacuum to the three-dimensional isentropic compressible Navier-Stokes equations}, 
Comm. Pure Appl. Math. {\bf 65} (2012), no.~4, 549--585.

\bibitem{JWX1}{\sc Q. Jiu, Y. Wang and Z. Xin}, 
{\em Global well-posedness of 2D compressible Navier-Stokes equations with large data and vacuum}, 
J. Math. Fluid Mech. {\bf 16} (2014), no.~3, 483--521.

\bibitem{JWX2}{\sc Q. Jiu, Y. Wang and Z. Xin}, 
{\em Global classical solution to two-dimensional compressible Navier-Stokes equations with large data in $\mathbb{R}^2$}, 
Phys. D {\bf 376/377} (2018), 180--194.

\bibitem{K}{\sc T. Kato},
 {\em Remarks on the Euler and Navier-Stokes equations in ${\bf R}^2$}, 
Proc. Sympos. Pure Math., {\bf 45}, (1986),1--7.

\bibitem{LLL}{\sc J. Li, Z. Liang}, {\em On local classical solutions to the Cauchy problem of the two-dimensional barotropic compressible Navier-Stokes equations with vacuum},
J. Math. Pures Appl. (9) {\bf 102} (2014), no.~4, 640--671.

\bibitem{LX}{\sc J. Li and Z. Xin}, 
{\em Some uniform estimates and blowup behavior of global strong solutions to the Stokes approximation equations for two-dimensional compressible flows}, 
J. Differential Equations {\bf 221} (2006), no.~2, 275--308.

\bibitem{LX2}{\sc J. Li and Z. Xin}, 
{\em Global well-posedness and large time asymptotic behavior of classical solutions to the compressible Navier-Stokes equations with vacuum}, 
Ann. PDE {\bf 5} (2019), no.~1, Paper No. 7, 37 pp..

\bibitem{LZZ}{\sc J. Li, J.~W. Zhang and J.~N. Zhao}, 
{\em On the global motion of viscous compressible barotropic flows subject to large external potential forces and vacuum}, 
SIAM J. Math. Anal. {\bf 47} (2015), no.~2, 1121--1153.

\bibitem{L1} {\sc P.L. Lions}, {\em Mathematical Topics in Fluid Mechanics. Vol. 1: Incompressible Models},
Oxford University Press, New York, 1996.

\bibitem{L2} {\sc P.L. Lions}, {\em Mathematical Topics in Fluid Mechanics. Vol. 2: Compressible Models},
Oxford University Press, New York, 1998.

\bibitem{MN1}{\sc A. Matsumura, T. Nishida}, {\em The initial value problem for the equations of motion
of viscous and heat-conductive gases},
J. Math. Kyoto Univ. {\bf 20}(1) (1980), 67--104.

\bibitem{N}{\sc J. Nash}, {\em Le probl\`{e}me de Cauchy pour les \'{e}quations diff\'{e}rentielles d'un fluide g\'{e}n\'{e}ral},
 Bull. Soc. Math. France {\bf 90} (1962), 487--497 (French).

\bibitem{NJA}{\sc J.~A. Nitsche}, {\em On Korn's second inequality},
RAIRO Anal. Num\'er. {\bf 15} (1981), no.~3, 237--248.

\bibitem{NS} {\sc A. Novotn\'y{} and I. Stra\v skraba}, {\em Introduction to the mathematical theory of compressible flow},
Oxford Lecture Series in Mathematics and its Applications, 27, Oxford Univ. Press, Oxford, 2004.

\bibitem{P}{\sc M. Perepelitsa}, {\em On the global existence of weak solutions for the Navier-Stokes equations of compressible fluid flows},
SIAM J. Math. Anal. {\bf 38} (2006), no.~4, 1126--1153.

\bibitem{STT}{\sc M.~A. Sadybekov, B.~T. Torebek and B.~K. Turmetov}, 
{\em Representation of Green's function of the Neumann problem for a multi-dimensional ball}, 
Complex Var. Elliptic Equ. {\bf 61} (2016), no.~1, 104--123.

\bibitem{SS}{\sc R. Salvi and I. Stra\v skraba},
{\em Global existence for viscous compressible fluids and their behavior as $t\to\infty$},
J. Fac. Sci. Univ. Tokyo Sect. IA Math. {\bf 40} (1993), no.~1, 17--51.

\bibitem{S}{\sc J. Serrin},
{\em On the uniqueness of compressible fluid motions},
Arch. Rational Mech. Anal. {\bf 3} (1959), 271--288.

\bibitem{SVA}{\sc V.A. Solonnikov},
{\em Solvability of the initial-boundary-value problem for the equation of a viscous compressible fluid},
J. Math. Sci. 14 (1980) 1120–1133.

\bibitem{SES} {\sc E.~M. Stein and R. Shakarchi}, {\em Complex analysis},
Princeton Univ. Press, Princeton, NJ, 2003.

\bibitem{TG}{\sc G.~G. Talenti}, 
{\em Best constant in Sobolev inequality}, 
Ann. Mat. Pura Appl. (4) {\bf 110} (1976), 353--372.

\bibitem{VK}{\sc V.~A. Vaigant and A.~V. Kazhikhov}, 
{\em On existence of global solutions to the two-dimensional Navier–Stokes equations for a compressible viscous fluid},
Sib. Math. J. 36 (6) (1995) 1283–1316.

\bibitem{WWV}{\sc W. von~Wahl},
{\em Estimating $\nabla u$ by ${\rm div}\, u$ and ${\rm curl}\, u$}, 
Math. Methods Appl. Sci. {\bf 15} (1992), no.~2, 123--143.

\bibitem{WLG}{\sc M. Wang, Z. Li and Z.~H. Guo},
{\em Global solutions to a 3D axisymmetric compressible Navier-Stokes system with density-dependent viscosity},
Acta Math. Sci. Ser. B (Engl. Ed.) {\bf 42} (2022), no.~2, 521--539.

\bibitem{ZAA}{\sc A.~A. Zlotnik}, 
{\em Uniform estimates and the stabilization of symmetric solutions of
a system of quasilinear equations}, 
Differ. Equ. {\bf 36} (2000), no.~5, 701--716.

\end {thebibliography}
\end{document}